\documentclass[11pt, a4paper]{article}
\usepackage[colorlinks,citecolor=blue,urlcolor=blue]{hyperref}
\usepackage{amsmath, amsthm, amssymb, amsfonts}

\usepackage{a4wide}
\usepackage{setspace}
\usepackage{natbib}

\usepackage[plain,noend]{algorithm2e}

\usepackage{subfig}
\usepackage{bigints}
\usepackage{enumitem}   
\usepackage[capitalise,noabbrev]{cleveref}
\crefname{assumption}{Assumption}{Assumptions}
\usepackage{todonotes}

\newcommand{\norm}[1]{\left\lVert#1\right\rVert}

\newcommand{\prob}[1]{\mathbb{P}\left(#1\right)}
\newcommand{\abs}[1]{\left\lvert #1 \right\rvert}
\newcommand{\s}[2]{\left\langle #1,\,#2 \right\rangle}
\newcommand{\eb}[4]{\text{EB}(#1,#2,#3,#4)}
\newcommand*\diff{\mathop{}\!\mathrm{d}}
\newcommand{\TV}{\mathsf{TV}}
\newcommand{\F}{\mathsf{F}}
\newcommand{\tr}{\mathrm{tr}}
\newcommand{\rev}{}

\providecommand{\cprime}{\ensuremath{^{\prime}}}

\theoremstyle{definition}
\newtheorem{theorem}{Theorem}
\newtheorem{proposition}{Proposition}
\newtheorem{lemma}{Lemma}
\newtheorem{assumption}{Assumption}
\newtheorem{definition}{Definition}
\newtheorem{example}{Example}

\allowdisplaybreaks
\setlength{\abovecaptionskip}{0pt}	
\setlength{\parindent}{0pt}	 	 	
\onehalfspacing

\begin{document}

\title{Weak convergence of Bayes estimators under general loss functions}

\author{Robin Requadt, Housen Li and Axel Munk}
\date{Institute of Mathematical Statistics, University of Goettingen}

\maketitle

\begin{abstract}
We investigate the asymptotic behavior of parametric Bayes estimators under a broad class of loss functions that extend beyond the classical translation-invariant setting. To this end, we develop a unified theoretical framework for loss functions exhibiting locally polynomial structure. This general theory encompasses important examples such as the squared Wasserstein distance, the Sinkhorn divergence and Stein discrepancies, which have gained prominence in modern statistical inference and machine learning. Building on the classical Bernstein--von Mises theorem, we establish sufficient conditions under which Bayes estimators inherit the posterior's asymptotic normality. As a by-product, we also derive conditions for the differentiability of Wasserstein-induced loss functions and provide new consistency results for Bayes estimators. Several examples and numerical experiments demonstrate the relevance and accuracy of the proposed methodology.
\end{abstract}

\noindent%
{\it Keywords:} 
Asymptotic normality, Bernstein--von Mises theorem, Wasserstein distance, Sinkhorn divergence, Stein discrepancy, Bregman divergence.

\section{Introduction}

In Bayesian inference (see e.g.~\citealp{Berger85,Bernardo1994,Gelman2014}), the posterior distribution plays a central role by representing the updated beliefs about unknown parameters after observing data. Through Bayes' theorem, it synthesizes prior knowledge with the observed data via the likelihood function. This unifying formulation provides a coherent framework for estimation, prediction, and decision-making under uncertainty. A foundational result in the asymptotic theory of Bayesian procedures is the Bernstein--von Mises (BvM) theorem, which traces back to the seminal insights of \citet{laplace}, and was later rigorously formalized in finite-dimensional settings by \citet{bernstein} and \citet{mises1931wahrscheinlichkeitsrechnung}, see also \cite{Nickl2016,Castillo2015,ghosal_van_der_vaart_2017,Durante2024} for a modern account and extensions to semi- and nonparametric models and non-gaussian limit distributions.  The classical BvM theorem asserts that, in a parametric model, under regularity conditions, the posterior distribution converges in total variation {distance} to a normal distribution centered at an efficient estimator {(e.g.\ the maximum likelihood estimator)} as the sample size grows. This convergence not only bridges Bayesian and frequentist paradigms but also implies that any mapping of the posterior distribution that is continuous with respect to total variation (e.g., the posterior mean or median) converges to the same mapping applied to the limiting normal distribution. {In particular, it implies that Bayesian credible sets can admit an asymptotic frequentist interpretation as confidence sets, see \cite{klein2012} for further discussion.}

Bayes estimators, defined as minimizers of expected posterior loss, can be viewed as such mappings.  The BvM theorem thus offers a powerful framework for analyzing their asymptotic properties. 
%The study of the large-sample behavior of Bayes estimators has long been a central theme in mathematical statistics.
%-new-
 {\rev The canonical example is the posterior mean,  arising under squared Euclidean loss, whose asymptotic properties are classical  \citep{Lecam1953,chao,Ibragimov1973}. Extensions (cf.~\citealp{Bickel1969}) beyond this quadratic setting typically assume that the loss $\ell$ is \emph{translation invariant},
\begin{equation}\label{e:shift-inv}
\ell(t, \vartheta) = \ell_0(t - \vartheta), \quad t, \vartheta \in \Theta,
\end{equation}
for some $\ell_0:\mathbb{R}^d\to[0,\infty)$. An exception is \citet{Bunke98}, where translation invariance is imposed only on the global upper and lower bounds of the loss $\ell$ rather than on $\ell$ itself.
A further common restriction is that the loss $\ell$ depends on a \emph{rescaled} difference of parameters, 
\begin{equation}\label{e:rescale}
\ell(t,\vartheta)=\ell_n(t,\vartheta)=\ell_0(\sqrt{n}(t - \vartheta))
\end{equation}
as considered in \citet{Ibragimov,Jeganathan1982,v,bayes_norm_loss}. This rescaled-loss formulation~\eqref{e:rescale} largely simplifies the asymptotic analysis. In particular, under \eqref{e:rescale}, the rescaled estimator $\sqrt{n}(\hat{\theta}_n - \vartheta_0)$ can be viewed as the minimizer of a posterior risk functional whose integrand no longer depends on the sample size $n$. This invariance eliminates a major technical obstacle in establishing asymptotic results. However, except in the special case where the loss is positively homogeneous, the estimator defined through \eqref{e:rescale} does not coincide with the standard Bayes estimator associated with a fixed loss function.
}

 {\rev By contrast, many loss functions of practical relevance in modern applications  
 neither satisfy the translation invariance condition \eqref{e:shift-inv} 
nor admit global upper and lower bounds that are themselves translation invariant.} Notable examples include total variation distance, Stein discrepancies, intrinsic losses, Bregman divergences and {Wasserstein-type distances {(as well as their surrogates such as the Sinkhorn divergence)}. The latter {have} been a primary motivation for this work,} as they recently receive substantial attention across disciplines owing to its ability to incorporate geometric information and to provide meaningful comparisons between probability distributions. Applications span fields such as image retrieval \citep{Rubner2000}, computer vision and image processing \citep{kolouri2018}, generative machine learning \citep{arjovsky2017}, econometrics and risk management \citep{feng2019}, natural language processing \citep{colombo2021}, {and biology \citep{SCHIEBINGER2019,Tameling2021, bunne2023}. In Bayesian statistics, the Wasserstein distance has also gained prominence, with uses ranging from predictive estimation \citep{Matsuda2021,Backhoff2022} to approximate computation \citep{Bernton2019_2,Ambrogioni2018} and sampling algorithms \citep{minsker2014scalable,srivastava2015wasp,SLD18}. However, the theoretical understanding of the resulting Bayes estimators remains rather limited. For instance, \citet{Matsuda2021} analyzed the Bayesian predictive density for location--scale families (with a one-dimensional scale parameter) under the squared $2$-Wasserstein loss, showing that it reduces to the plug-in estimator based on the posterior mean, thereby falling within the well-understood framework of translation-invariant losses. In a nonparametric setting, \citet{Backhoff2022} established consistency of Bayes estimators with respect to $p$-Wasserstein losses ($p\ge1$) under exponential moment conditions. By contrast, we consider parametric models and study not only consistency but, more importantly, the asymptotic distribution of Bayes estimators under mild moment conditions for general losses, including Wasserstein and translation-invariant ones. As an example, we show (in \Cref{ss:quad:loss}) that the Bayes estimator under the squared 2-Wasserstein loss is consistent and asymptotic normal for the Pareto family $\{P_\vartheta=\mathrm{Pareto}(1,\vartheta) \mid \vartheta\in [a,\infty) \}$ with some $a>2$. In this example, the consistency result of \citet[Theorem~4.24]{Backhoff2022} does not apply, since \(
\int_1^\infty e^{c\norm{x-x_0}^2}\diff P_{3}(x)=\infty,
\) for all $c>0$ and $x_0\ge 1$, which violates their exponential moment assumption.} 

Interestingly, the Bayes estimator under the Wasserstein loss can be interpreted as a \emph{Wasserstein barycenter} over the posterior. Let $\mathcal{W}_2$ denote the set of probability measures on a Polish space $\mathcal{X}$ with finite second moment. Given a probability measure $\Lambda$ on $\mathcal{W}_2$, the corresponding Wasserstein barycenter is the solution to
\[
\inf_{P \in \mathcal{W}_2} F_\Lambda(P), \quad \text{where} \quad F_\Lambda(P) := \int_{\mathcal{W}_2} W_2^2(P, Q)\, \diff\Lambda(Q),
\]
with $W_2$ denoting the 2-Wasserstein distance (see e.g.~\citealp{Agueh-carlier,zemel}). Now consider a parametric family of distributions $\mathcal{M} = \{P_\vartheta : \vartheta \in \Theta\} \subseteq \mathcal{W}_2$, and define a mapping $\Psi : \Theta \to \mathcal{W}_2$, $\vartheta \mapsto P_\vartheta$. If $\Lambda$ is taken as the push-forward of the posterior distribution $P_{\theta \mid {\boldsymbol X} = x}$ under $\Psi$, i.e., $\Lambda = \Psi^\# P_{\theta \mid {\boldsymbol X} = x}$, then
\[
\inf_{P \in \mathcal{M}} F_\Lambda(P) = \inf_{t \in \Theta} \int_\Theta W_2^2(P_t, P_\vartheta)\, \diff P_{\theta \mid {\boldsymbol X} = x}(\vartheta),
\]
and a Bayes estimator $\hat{\theta}_n$ under Wasserstein loss yields that $P_{\hat{\theta}_n}$ is a Wasserstein barycenter over $\mathcal{M}$. In certain cases, such as when $\mathcal{M}$ is a location-scale family, the Wasserstein barycenter uniquely exists {and is part of the parametric family $\mathcal{M}$, see \citet[Theorem~3.10]{esteban2018}.} In this case $P_{\hat{\theta}_n}$ is the population Wasserstein barycenter.

{While Wasserstein distances form a central motivating example, they are not the only important class of loss functions beyond the translation-invariant framework~\eqref{e:shift-inv}. Stein discrepancies, for instance, arise from Stein’s method and are closely related to integral probability metrics.} They are particularly useful in providing tractable and informative distances between distributions, and have been actively studied in both theoretical and applied contexts, see \citet{anastasiou2023} for an overview. {Likewise, Bregman divergences and $f$-divergences} represent a further broad and influential class of loss functions that fall outside the translation invariant setting~\eqref{e:shift-inv}. {These divergences} play a prominent role in both statistics and machine learning due to their deep connections to convexity, information geometry, and decision theory (see e.g.\ \citealp{Csiszar1991,Zhang2004,Banerjee2005}). 

Motivated by the growing importance of such non-standard loss functions, we develop a general theoretical framework for analyzing the asymptotic behavior of Bayes estimators under broad classes of losses, in particular, those that are not translation invariant. {\rev For such losses, the structural simplifications afforded by \eqref{e:shift-inv} or \eqref{e:rescale} are no longer available, and the asymptotic analysis requires substantially more delicate arguments, even though, at a high level, the overall proof strategy remains conceptually aligned with the classical works of \citet{Lecam1953,chao,Ibragimov1973}.}
Our main contributions are as follows:
\begin{enumerate}[label=(\roman*)]
\item 
We establish consistency (\Cref{consistency}) and derive asymptotic distributions (\Cref{main_thm1}) for Bayes estimators associated with general loss functions that exhibit locally polynomial behavior, under comparatively mild regularity conditions. {\rev A key structural insight is that the posterior risk, viewed as a stochastic process indexed by the local parameter (i.e.\ a suitable local rescaling of the parameter),  admits a nondegenerate limit typically characterized by the Hadamard directional derivative of the loss function (\Cref{corollary1}). This derivative naturally emerges as the effective limiting loss, and the asymptotic distribution of the Bayes estimator is therefore governed by a location functional of a Gaussian limit experiment evaluated with respect to this derivative. In this way, the Hadamard directional derivative of the loss plays a central role in linking local posterior asymptotics with the limiting optimization problem.}
\item 
We show that Bayes estimators under squared $2$-Wasserstein loss (\Cref{ss:quad:loss}) and under Stein discrepancies (\Cref{ss:stein}) are asymptotically normal and achieve asymptotic efficiency, thereby extending classical optimality results beyond the translation-invariant framework.
\item 
As an auxiliary contribution, we provide sufficient conditions (\Cref{grad_ws,grad_ws_real} in Supplement~\ref{diff_ws}) for classical differentiability of loss functions induced by the $2$-Wasserstein distance. These are of independent interest for the theoretical analysis of Wasserstein-based procedures.
\end{enumerate}

\emph{Organization of the paper.}
\Cref{section_1} lays the foundation for our results, including a review of the BvM theorem in \Cref{sec1_sub1}, and novel consistency results for Bayes estimators under general loss functions in \Cref{sec1_sub3}. In \Cref{sec2}, we present our main results on the weak convergence of Bayes estimators for losses that behave locally as polynomials. This section also contains a convergence result for the posterior expectation of loss functions, uniformly over compact sets, with its interpretation discussed in Supplement~\ref{ss:had}. \Cref{loc_q} specializes to loss functions that are locally quadratic. In \Cref{sec5}, we provide several examples, including popular loss functions both within and beyond the classical framework~\eqref{e:shift-inv}.  {Even when the conditions of our theoretical framework are not satisfied, one may still expect a distributional limit, though possibly non-Gaussian, see \Cref{ss:beyond}.} Numerical illustrations confirming the theoretical distributional limits of Bayes estimators are presented in \Cref{sec6}.  {\rev A discussion of potential extensions of the proposed framework is given in~\Cref{s:extension}.} All proofs, along with an overview of different types of differentiability, and sufficient conditions for the differentiability of Wasserstein-type losses, are deferred to the supplementary material.

\emph{Notation.} For sequences of positive real numbers $\{a_n\}$ and $\{b_n\}$, we write $a_n \lesssim b_n$ if there exists a constant $C > 0$ such that $a_n \le C b_n$ for all $n$. The Euclidean norm on $\mathbb{R}^d$ is denoted by $\norm{\cdot}$ and the total variance norm of probability measures by $\norm{\cdot}_{\TV}$. For matrices in $\mathbb{R}^{d\times d}$, $\norm{\cdot}_{\F}$ denotes the Frobenius norm, $\tr(\cdot)$ the trace operator, and $\det(\cdot)$ the determinant. For $\delta > 0$, a parameter space $\Theta \subseteq \mathbb{R}^d$ and $\vartheta_0 \in \Theta^\circ$ the interior of $\Theta$, we define the $\delta$-ball around $\vartheta_0$ by $B_\delta(\vartheta_0) := \{ \vartheta \in \Theta : \|\vartheta - \vartheta_0\| \le \delta \}$. By $\mathcal{N}_d(\mu, \Sigma)$, we denote the $d$-variate Gaussian distribution with mean vector $\mu$ and covariance matrix $\Sigma$. We write $\mathbb{E}_P[\cdot]$ to denote the expectation under the probability distribution $P$. For a set $K\subseteq\mathbb{R}^d$, we denote the space of bounded functions from $K$ to $\mathbb{R}$ as 
\(
\ell^\infty(K):=\left\{f:K\to \mathbb{R}:\sup_{x\in K}\abs{f(x)}<\infty\right\}.
\) 

\section{Framework and preparatory results}\label{section_1}

Let $(\Omega, \mathcal{F}, \mathbb{P})$ be a probability space, $(\mathcal{X}, \mathcal{A})$ a measurable space and $(\mathcal{Y}, \mathcal{T})$ a measurable, Polish  space. Consider a random variable $Z: (\Omega, \mathcal{F}, \mathbb{P}) \to (\mathcal{Y}, \mathcal{T})$ and a sequence of random variables $X_1, \dots, X_n : (\Omega, \mathcal{F}, \mathbb{P}) \to (\mathcal{X}, \mathcal{A})$ such that the joint vector $\boldsymbol{X} = (X_1, \dots, X_n)$ is distributed according to a probability measure $P_n$ on $\mathcal{X}^n$. Let $g_n : \mathcal{X}^n \to \mathcal{Y}$ be a sequence of measurable functions. We write
\(
g_n(\boldsymbol{X}) \stackrel{D}{\to}_{P_n} Z, \text{ as } n \to \infty,
\)
to denote weak convergence under $P_n$, namely, for every bounded and continuous function $f : \mathcal{Y} \to \mathbb{R}$, it holds that
\(
\int_{\mathcal{X}^n} f\bigl(g_n(x)\bigr)\diff P_n(x)\to \int_\Omega f\bigl(Z(\omega)\bigr)\diff\mathbb{P}(\omega),\text{ as } n\to \infty.
\)

We now introduce our Bayes setting. Let $\{P_\vartheta : \vartheta \in \Theta\}$ be a parametric family of probability measures on $(\mathcal{X}, \mathcal{A})$, which are absolutely continuous with respect to a common $\sigma$-finite measure. The parameter space $\Theta \subseteq \mathbb{R}^d$ is assumed to have nonempty interior, i.e., $\Theta^\circ \neq \emptyset$. We consider the Bayes model
\begin{equation}\label{e:bayes}
\begin{aligned}
	X_1, \dots, X_n \mid \theta = \vartheta \stackrel{\text{i.i.d.}}{\sim} P_\vartheta, \qquad 
	\theta \sim \Pi,
\end{aligned}
\end{equation}
where $\Pi$ is a prior probability measure on $(\Theta, \mathcal{B}(\Theta))$, with $\mathcal{B}(\Theta)$ being the Borel $\sigma$-algebra induced by the Euclidean topology on $\Theta$.

Fix a true parameter value $\vartheta_0 \in \Theta^\circ$, and define the local parameter $h_n := n^{1/2}(\theta - \vartheta_0)$ as well as the corresponding rescaled local parameter space $H_n := n^{1/2}(\Theta - \vartheta_0)$. Throughout this paper, we adopt a frequentist perspective: We study the behavior of Bayes estimators and posterior expectations under the data distribution $P_{\vartheta_0}^n := \bigotimes_{i=1}^n P_{\vartheta_0}$, corresponding to the assumption that $X_1, \dots, X_n$ are i.i.d.\ observations from $P_{\vartheta_0}$. To ensure that posterior quantities are well-defined under this data distribution, we always assume that
\(
P_{\vartheta_0}^n \ll P^{\boldsymbol{X}},
\)
where $P^{\boldsymbol{X}}$ denotes the marginal distribution of $\boldsymbol{X}$ under the Bayes model~\eqref{e:bayes}.

\subsection{Bernstein--von Mises theorem and its implications}\label{sec1_sub1}
A parametric family of probability distributions $\{P_\vartheta:\vartheta\in \Theta\subset\mathbb{R}^d\}$, dominated by a measure $\mu$ on $\mathcal{X}$, is \emph{differentiable in quadratic mean} at $\vartheta_0 \in \Theta^\circ$ if there exists a vector-valued function $\ell'_{\vartheta_0}:\mathcal{X}\to \mathbb{R}^d$ such that
\[
\int_{\mathcal{X}}\left[p_{\vartheta_0+h}^{1/2}(x)- p^{1/2}_{\vartheta_0}(x)-\frac{1}{2}\s{h}{\ell'_{\vartheta_0}(x)p^{1/2}_{\vartheta_0}(x)}\right]^2\diff \mu(x)=o(\norm{h}^2),\quad h\to 0,
\]
where $p_\vartheta=\diff P_\vartheta/\diff \mu$. We refer to $\ell'_{\vartheta_0}$ as the \emph{quadratic mean derivative} at $\vartheta_0$. In this case, the Fisher information matrix at $\vartheta_0$ is defined as
\(
I_{\vartheta_{0}}=\int_{\mathcal{X}} \ell_{\vartheta_0}'(x)\ell_{\vartheta_0}'(x)^\intercal \diff P_{\vartheta_0}(x)\in \mathbb{R}^{d\times d},
\)
which has finite components if the family $\{P_\vartheta\}_{\vartheta\in \Theta}$ is differentiable in quadratic mean at $\vartheta_0$.

%\todo[inline]{Better: Change in all places $\ell'_{\vartheta_0}$ for a different notation, as $\ell$ is reserved for the loss}

The celebrated Bernstein--von Mises (BvM) theorem, forms the foundation of our analysis of the asymptotic behavior of Bayes estimators.  

\begin{theorem}[BvM theorem; {\citealp[Theorem~10.1]{v}}]
	\label[theorem]{B.v.M:} 
	Let $\{P_\vartheta:\vartheta\in \Theta\subseteq \mathbb{R}^d\}$ be a parametric family of probability measures, each absolutely continuous with respect to a common $\sigma$-finite measure $\mu$ on $\mathcal{X}$, and assume this family is differentiable in quadratic mean at $\vartheta_{0}\in \Theta^\circ$ with derivative $\ell'_{\vartheta_{0}}$ and nonsingular Fisher information matrix $I_{\vartheta_{0}}$.  Let ${\boldsymbol X} = (X_1,\ldots, X_n)$ and suppose that, for every $\varepsilon>0$, there exists a sequence of tests $\phi_n:\mathcal{X}^n\to [0,1]$ such that, as $n\to \infty$,
\[
\mathbb{E}_{P^n_{\vartheta_{0}}}\left[\phi_n({\boldsymbol X})\right]\to 0\qquad \text{and}\qquad \sup_{\vartheta\in \Theta:\norm{\vartheta-\vartheta_0}\ge \varepsilon}\mathbb{E}_{P_\vartheta^n}\left[1-\phi_n({\boldsymbol X})\right]\to 0.
\]
Define
\(
\Delta_{n,\vartheta_0}({\boldsymbol X}):=I_{\vartheta_0}^{-1}n^{-1/2}\sum_{i=1}^{n}\ell'_{\vartheta_0}(X_i).
\)
Then, under the Bayes model~\eqref{e:bayes}, if the prior measure $\Pi$ has a Lebesgue density $\pi$ that is continuous in a neighborhood of $\vartheta_{0}$ and satisfies $\pi(\vartheta_{0})>0$, it holds for $h_n:=n^{1/2}(\theta-\vartheta_0)$ that, as $n\to \infty$, 
\[
\norm{P_{h_n\vert {\boldsymbol X} }- \mathcal{N}_d\left(\Delta_{n,\vartheta_0}({\boldsymbol X}),I_{\vartheta_0}^{-1}\right)}_{\TV}\stackrel{P_{\vartheta_0}^n}{\to}0.
\] 
\end{theorem}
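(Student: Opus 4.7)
The plan is to reduce the total variation convergence to an $L^1$ convergence of Lebesgue densities of $h_n$, by a Scheffé-type argument. Let $q_n(\cdot\mid\boldsymbol X)$ denote the posterior density of $h_n=n^{1/2}(\theta-\vartheta_0)$ on $H_n$ (extended by zero to $\mathbb{R}^d$), and let $\varphi_n$ denote the density of $\mathcal{N}_d(\Delta_{n,\vartheta_0}(\boldsymbol X),I_{\vartheta_0}^{-1})$. Since
\[
\norm{P_{h_n\mid\boldsymbol X}-\mathcal{N}_d(\Delta_{n,\vartheta_0}(\boldsymbol X),I_{\vartheta_0}^{-1})}_{\TV}=\tfrac12\int\abs{q_n(h\mid\boldsymbol X)-\varphi_n(h)}\,\diff h,
\]
it suffices to show this integral tends to zero in $P_{\vartheta_0}^n$-probability. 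The two key ingredients are posterior concentration at the parametric rate and the local asymptotic normality (LAN) expansion implied by differentiability in quadratic mean (DQM).

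I would first establish posterior concentration. Using the consistent tests $\phi_n$, Bayes's rule, and Fubini's theorem, one obtains for every fixed $\varepsilon>0$ that $P_{\theta\mid\boldsymbol X}(\norm{\theta-\vartheta_0}\ge\varepsilon)\to 0$ in $P_{\vartheta_0}^n$-probability. Combining this with a pointwise LAN lower bound on the marginal likelihood near $\vartheta_0$ allows one to upgrade concentration to the $n^{-1/2}$ scale: for every sequence $M_n\to\infty$,
\[
P_{h_n\mid\boldsymbol X}\bigl(\norm{h_n}>M_n\bigr)\;\xrightarrow{P_{\vartheta_0}^n}\;0.
\]

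Second, DQM implies the LAN expansion
\[
\log\prod_{i=1}^n\frac{p_{\vartheta_0+h/\sqrt n}(X_i)}{p_{\vartheta_0}(X_i)}
=h^\intercal I_{\vartheta_0}\Delta_{n,\vartheta_0}(\boldsymbol X)-\tfrac12 h^\intercal I_{\vartheta_0}h+r_n(h),
\]
with $r_n(h)\to 0$ in $P_{\vartheta_0}^n$-probability uniformly on compact sets. Together with continuity and positivity of $\pi$ at $\vartheta_0$, this shows that on any compact $K\subset\mathbb{R}^d$ the unnormalized posterior density of $h_n$ is asymptotically proportional to $\exp\!\bigl(-\tfrac12(h-\Delta_{n,\vartheta_0})^\intercal I_{\vartheta_0}(h-\Delta_{n,\vartheta_0})\bigr)$. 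Splitting the $L^1$ distance into the integrals over $\{\norm{h}\le M_n\}$ and its complement, controlling the tail by the concentration bound above and Gaussian tail decay, and showing that the posterior normalizing constant is asymptotically equivalent to the Gaussian one, yields the required $L^1$ convergence.

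The main obstacle is upgrading the pointwise LAN expansion, automatic from DQM at each fixed $h$, to a \emph{uniform} statement over slowly growing balls $\{\norm{h}\le M_n\}$, while simultaneously showing that the posterior mass assigned to the complement is negligible. The uniform LAN requires an equicontinuity-type argument on the remainder $r_n(\cdot)$, and the tail-negligibility needs a delicate combination of the test hypothesis (ruling out posterior mass at macroscopic distances from $\vartheta_0$) with likelihood upper bounds at intermediate, mesoscopic scales $M_n/\sqrt n\lesssim \norm{\vartheta-\vartheta_0}\lesssim \varepsilon$. Handling this intermediate zone, where neither the LAN expansion nor the global test bound is immediately useful, is the technical heart of the proof.
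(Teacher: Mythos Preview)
The paper does not prove this theorem: it is quoted verbatim as Theorem~10.1 of \cite{v} and used as a black box throughout. There is therefore no ``paper's own proof'' to compare against. Your outline follows the standard route in van der Vaart's book (LAN expansion from DQM, posterior concentration via the test sequence, splitting into a growing compact and its complement, and matching normalizing constants), and your identification of the intermediate zone $M_n/\sqrt{n}\lesssim\norm{\vartheta-\vartheta_0}\lesssim\varepsilon$ as the technical heart is accurate. As a sketch it is sound, but since the paper supplies no argument of its own, there is nothing further to compare.
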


\subsection{Bayes estimator and its consistency}\label{sec1_sub3}

A key concept in the theory of Bayes estimators is the loss function. We define a \emph{loss function} $\ell:\Theta\times \Theta\to [0,\infty)$ as a jointly measurable function on $\Theta\subseteq \mathbb{R}^d$  that satisfies $\ell(t,\vartheta)=0$ if and only if $t=\vartheta$. Throughout, we assume:

\begin{assumption}[{Integrability}]
 For the loss function $\ell$, we assume that for every $t\in \Theta$,
 \(
\int_{\Theta}\ell(t,\vartheta)\diff\Pi(\vartheta)<\infty,
\)
where $\Pi$ is the prior measure in the Bayes model~\eqref{e:bayes}.
\end{assumption}

This assumption ensures that posterior expectations of the loss function are well-defined.  
We define a \emph{Bayes estimator} with respect to the loss function $\ell$ as a measurable function $\hat{\theta}_n:\mathcal{X}^n\to \Theta$ such that
\[
\hat{\theta}_n(x)\in \mathop{\arg \min}_{t\in \Theta}\int_{\Theta}\ell(t,\vartheta)\diff P_{\theta\vert {\boldsymbol X}=x}(\vartheta)
\]
for $P^{\boldsymbol X}$-almost every (a.e.) $x\in \mathcal{X}^n$. We assume throughout that a Bayes estimator exists. For sufficient conditions ensuring existence, see e.g.~\citet[Proposition 4.1]{Shao_2003_book}. With slight abuse of notation, we do not distinguish between the function $\hat{\theta}_n$ and the random variable $\hat{\theta}_n({\boldsymbol X})$, and often write simply $\hat{\theta}_n$.

In the study of M-estimators, a well-separated minimum is typically assumed to guarantee consistency, see for example \citet[Section~3.2]{vanwell} or \citet[Chapter~5]{v}. Specifically, $\ell(\cdot,\vartheta)$ has a well-separated minimum at $\vartheta\in \Theta$, if for any $M>0$
 \[
 \inf_{t\in \Theta:\norm{t-\vartheta}>M}\ell(t,\vartheta)>0.
 \] 
 To establish consistency of Bayes estimators, we introduce the following strengthened {version of a well-separated-minimum} condition, which uniformly controls how the loss behaves away from the true parameter.

\begin{assumption}[Uniformly well-separated minimum] \label[assumption]{mon}
For any $M > 0$, there is a $\delta > 0$ such that
\[
\inf_{\substack{t,\vartheta \in \Theta \\ \|t - \vartheta_0\| > M, \, \|\vartheta - \vartheta_0\| \le \delta}} \ell(t, \vartheta) > 0.
\]
\end{assumption}

This condition guarantees that for any sufficiently small neighborhood around the true parameter $\vartheta_0$, all points far away from $\vartheta_0$ incur a strictly positive loss. Several natural loss functions satisfy this condition, as illustrated below.

\begin{proposition} \label[proposition]{unif_well_implied}
The following loss functions $\ell : \Theta \times \Theta \to [0, \infty)$ satisfy \Cref{mon}:
\begin{enumerate}[label=(\roman*)]
    \item \label{i:uwi:i}
    $\ell(t, \vartheta) = \rho(t, \vartheta)^p$ for some $p > 0$, where $\rho : \Theta \times \Theta \to [0, \infty)$ is a metric such that $t \mapsto \rho(t, \vartheta_0)$ is continuous at $\vartheta_0$ (with respect to the Euclidean distance) and has a well-separated minimum at $\vartheta_0$, i.e.,
    \(
    \inf_{t \in \Theta : \|t - \vartheta_0\| > M} \rho(t, \vartheta_0) > 0  \text{ for any } M > 0.
    \)
    
    \item \label{i:uwi:ii}
    $\ell(t, \vartheta) {\rev\, \ge\,}g(\|t - \vartheta\|)$, where $g : [0, \infty) \to [0, \infty)$ is a non-decreasing function satisfying $g(x) = 0$ if and only if $x = 0$.
\end{enumerate}
\end{proposition}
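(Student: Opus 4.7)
The plan is to reduce both parts to the triangle inequality and then exploit the respective hypotheses on $\rho$ and $g$ to obtain a strictly positive lower bound that is uniform over the relevant region.

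For part~\ref{i:uwi:i}, the key quantity is
\(
c := \inf_{t\in \Theta:\norm{t-\vartheta_0}>M}\rho(t,\vartheta_0),
\)
which is strictly positive by the well-separated-minimum hypothesis on $\rho(\cdot,\vartheta_0)$. By the continuity of $t\mapsto \rho(t,\vartheta_0)$ at $\vartheta_0$ and the fact that $\rho(\vartheta_0,\vartheta_0)=0$, I would choose $\delta>0$ so small that $\rho(\vartheta,\vartheta_0)\le c/2$ whenever $\norm{\vartheta-\vartheta_0}\le\delta$. The triangle inequality then yields $\rho(t,\vartheta)\ge \rho(t,\vartheta_0)-\rho(\vartheta,\vartheta_0)\ge c-c/2=c/2$ for all $(t,\vartheta)$ in the region of interest, so that $\ell(t,\vartheta)=\rho(t,\vartheta)^p\ge (c/2)^p>0$, which is exactly what \Cref{mon} demands.

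For part~\ref{i:uwi:ii}, I would use the Euclidean triangle inequality $\norm{t-\vartheta}\ge \norm{t-\vartheta_0}-\norm{\vartheta-\vartheta_0}$ and simply take $\delta:=M/2$. Then for $\norm{t-\vartheta_0}>M$ and $\norm{\vartheta-\vartheta_0}\le M/2$, one has $\norm{t-\vartheta}> M/2>0$. Monotonicity of $g$ gives $\ell(t,\vartheta)=g(\norm{t-\vartheta})\ge g(M/2)$, and the condition that $g$ vanishes only at $0$ ensures $g(M/2)>0$, finishing the proof.

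Neither step strikes me as subtle; the main thing to be careful about is that in~\ref{i:uwi:i} the metric $\rho$ need not be the Euclidean one, so continuity of $t\mapsto\rho(t,\vartheta_0)$ with respect to the Euclidean topology must be invoked explicitly to translate the Euclidean smallness of $\norm{\vartheta-\vartheta_0}$ into $\rho$-smallness of $\rho(\vartheta,\vartheta_0)$. Once that conversion is made, the triangle inequality does all the work in both cases.
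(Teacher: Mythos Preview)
Your proof is correct and follows essentially the same strategy as the paper's, with part~\ref{i:uwi:ii} being virtually identical (same choice $\delta=M/2$, same reverse triangle inequality, same conclusion via monotonicity of $g$). For part~\ref{i:uwi:i} there is a small but pleasant difference: you apply the triangle inequality directly to the metric $\rho$ and only then raise to the power $p$, whereas the paper first passes to $\ell=\rho^p$ and uses the elementary inequality $(b_1+b_2)^p\le \max(1,2^{p-1})(b_1^p+b_2^p)$ to obtain $\ell(t,\vartheta)\ge a\,\ell(t,\vartheta_0)-\ell(\vartheta_0,\vartheta)$ with $a=\min(1,2^{-(p-1)})$. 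Your route avoids this power-splitting constant altogether and is slightly cleaner; the paper's version has the minor advantage that it never needs $\rho$ to be nonnegative before taking the $p$th power (which is automatic here anyway). Substantively the two arguments are the same.
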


\Cref{unif_well_implied}~\ref{i:uwi:i} includes in particular the {squared Wasserstein loss}
\(
\ell(t, \vartheta) = W_2^2(P_t, P_\vartheta), \; t, \vartheta \in \Theta,
\)
provided that the mapping $t \mapsto W_2(P_t, P_{\vartheta_0})$ is continuous and has a well-separated minimum at~$\vartheta_0$. As a technical preparation, we introduce the following growth condition.

\begin{definition}[Exponential growth]\label[definition]{d:eg}
 A function $f : A \times A \to [0,\infty)$, $A\subseteq \mathbb{R}^d$, is said to be \emph{exponentially bounded} with parameters $c_1, c_2, c_3 \in \mathbb{R}$ relative to a subset $K \subseteq A$ if 
\[
f(t,\vartheta) \leq c_1 \exp\!\left( c_2 \|t\|^{c_3} + c_2 \|\vartheta\|^{c_3} \right)\quad \text{for all }t \in K\text{ and }\vartheta \in A.
\]
In this case, we write $f \in \eb{c_1}{c_2}{c_3}{K}$.
\end{definition}

As an alternative to \Cref{mon}, we can employ convexity of loss functions.

\begin{proposition}[Consistency]\label[proposition]{consistency}
Let the conditions of the BvM theorem (\Cref{B.v.M:}) hold, and suppose that the loss function $\ell : \Theta \times \Theta \to [0, \infty)$ is exponentially bounded (see~\Cref{d:eg}), i.e.~$\ell \in  \eb{c_1}{c_2}{c_3}{\Theta}$ with constants $c_1, c_2 > 0$ and $0<c_3 < 2$. Assume that $\vartheta \mapsto \ell(t, \vartheta)$ is continuous at $\vartheta_0 \in \Theta^\circ$ for every $t \in \Theta$, and further that one of the following holds:
\begin{enumerate}[label=(\roman*)]
    \item 
    \Cref{mon} holds. \label{i:sep}
    \item \label{i:conv}
    The set $\Theta$ is convex, and for each $\vartheta \in \Theta$, the mapping $t \mapsto \ell(t, \vartheta)$ is convex. 
%    Additionally, for every $M > 0$, 
%    \[
%    \inf_{t \in \Theta : \|t - \vartheta_0\| > M} \ell(t, \vartheta_0) > 0.
%    \]
\end{enumerate}
Then, the Bayes estimator $\hat{\theta}_n$ with respect to the loss function $\ell$ is consistent, i.e.
\[
\hat{\theta}_n \xrightarrow{P^n_{\vartheta_0}} \vartheta_0, \quad \text{as } n \to \infty.
\]
\end{proposition}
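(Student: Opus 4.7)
My plan is to reduce the consistency of $\hat\theta_n$ to a single key estimate: the posterior-integrated loss at the truth vanishes, i.e.,
\[
F_n(t) := \int_\Theta \ell(t,\vartheta)\,\diff P_{\theta\mid {\boldsymbol X}}(\vartheta),\qquad F_n(\vartheta_0) \stackrel{P_{\vartheta_0}^n}{\to} 0.
\]
Once this is available, optimality of the Bayes estimator immediately gives $F_n(\hat\theta_n)\le F_n(\vartheta_0)\to 0$ in $P_{\vartheta_0}^n$-probability, and the whole problem reduces to arguing that any sequence $\hat\theta_n$ with $F_n(\hat\theta_n)\to 0$ must lie close to $\vartheta_0$.

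To prove $F_n(\vartheta_0)\to 0$, I would split the integral at a small ball $B_\delta(\vartheta_0)$: on $B_\delta(\vartheta_0)$ the continuity of $\vartheta\mapsto\ell(\vartheta_0,\vartheta)$ at $\vartheta_0$ together with $\ell(\vartheta_0,\vartheta_0)=0$ makes the integrand arbitrarily small, while on the complement the exponential envelope $\ell(\vartheta_0,\vartheta)\le c_1 \exp(c_2\|\vartheta_0\|^{c_3}+c_2\|\vartheta\|^{c_3})$ must be damped by posterior mass. To push $\hat\theta_n\to\vartheta_0$, case~\ref{i:sep} is the quicker one: if $\|\hat\theta_n-\vartheta_0\|>M$ on a non-negligible event, \Cref{mon} supplies $\delta,c>0$ with $\ell(\hat\theta_n,\vartheta)\ge c$ for all $\vartheta\in B_\delta(\vartheta_0)$, so $F_n(\hat\theta_n)\ge c\,P_{\theta\mid{\boldsymbol X}}(B_\delta(\vartheta_0))$, and the posterior concentration provided by the BvM theorem forces the right-hand side above $c/2$ with high probability, contradicting $F_n(\hat\theta_n)\to 0$. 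For case~\ref{i:conv}, the same splitting applied pointwise shows $F_n(t)\to \ell(t,\vartheta_0)$ in probability for every fixed $t$; since each $F_n$ is convex in $t$ and the limit $\ell(\cdot,\vartheta_0)$ is convex with unique zero at $\vartheta_0$, the classical fact that pointwise convergence of convex functions is uniform on compact subsets of the interior lifts this to uniform convergence in probability on $B_M(\vartheta_0)$. If $\|\hat\theta_n-\vartheta_0\|>M$ along a subsequence, convexity of $F_n$ on the segment from $\vartheta_0$ to $\hat\theta_n$ combined with $F_n(\hat\theta_n)\le F_n(\vartheta_0)$ forces $F_n(t_n)\le F_n(\vartheta_0)\to 0$ at the point $t_n$ on that segment with $\|t_n-\vartheta_0\|=M$; extracting a convergent subsequence $t_n\to t^\star$ with $\|t^\star-\vartheta_0\|=M$ and invoking the uniform convergence gives $\ell(t^\star,\vartheta_0)=0$, hence $t^\star=\vartheta_0$, a contradiction.

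The main obstacle is the tail piece of the splitting: the BvM theorem delivers only total-variation convergence, which transports integrals of \emph{bounded} functions, whereas the envelope of \Cref{d:eg} is unbounded. I therefore expect to need uniform-in-$n$ exponential moment control on the posterior, typically of the form $\int e^{c_2\|\vartheta\|^{c_3}}\diff P_{\theta\mid{\boldsymbol X}}(\vartheta)=O_{P_{\vartheta_0}^n}(1)$, together with rapid tail decay. The natural route is to exploit the BvM testing sequence $\phi_n$ to manufacture a global likelihood lower bound away from $\vartheta_0$, from which a Schwartz-type calculation yields $P_{\theta\mid{\boldsymbol X}}(\|\vartheta-\vartheta_0\|>\delta)\lesssim e^{-n c_\delta}$ with high probability. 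Because $c_3<2$, this exponential decay dominates the envelope after a truncation argument, making the tail integral in $F_n(\vartheta_0)$ vanish in probability and closing the proof.
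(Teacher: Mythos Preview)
Your proposal is essentially the same approach as the paper's: the paper also shows $\int_\Theta \ell(\vartheta_0,\vartheta)\,\diff P_{\theta\mid{\boldsymbol X}}(\vartheta)\to 0$ via the same ball--complement splitting, handles the tail piece by exactly the test-based mechanism you anticipate (packaged as a separate technical lemma, \Cref{weak_conv_prop1}, whose hypotheses are verified using $\int\ell(\vartheta_0,\vartheta)\,\diff\Pi(\vartheta)<\infty$ and the sub-Gaussian envelope $c_3<2$), and then runs your case~\ref{i:sep} argument verbatim and your case~\ref{i:conv} convexity/segment argument in its direct probabilistic form (defining $T_n=\alpha\hat\theta_n+(1-\alpha)\vartheta_0$ with $\|T_n-\vartheta_0\|\le M$ and bounding $\mathbb{P}(\|T_n-\vartheta_0\|>\varepsilon)$ by $\mathbb{P}(\sup_{B_M(\vartheta_0)}|F_n-\ell(\cdot,\vartheta_0)|\ge \tfrac12\inf_{\|t-\vartheta_0\|>\varepsilon}\ell(t,\vartheta_0))$, which avoids the deterministic-subsequence phrasing in your sketch).
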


Note that a loss function satisfying condition~\ref{i:conv} always has a well-separated minimum at $\vartheta_0$, but may not satisfy \Cref{mon}.

\section{Weak convergence of Bayes estimators for locally polynomial losses}\label{sec2}
We now present our main theorem, which establishes general conditions under which Bayes estimators converge weakly when the loss function behaves locally like a polynomial. 

\begin{theorem}[Weak convergence]\label{main_thm1}
Suppose the conditions of the BvM theorem (\Cref{B.v.M:}) hold, and there exists $\delta>0$ such that the loss function $\ell:\Theta\times \Theta\to [0,\infty)$ satisfies
\(
\int_\Theta\sup_{t\in B_\delta(\vartheta_0)}\ell(t,\vartheta)\,\mathrm{d} \Pi(\vartheta)<\infty.
\)  
For some $p >0$, assume the following:
\begin{enumerate}[label=(\roman*)]
\item\label{Thm4.1a1} 
There exist $\varepsilon>0$ and some constants $c_1,c_2>0$ such that
\[
c_1\|t-\vartheta\|^p\le \ell(t,\vartheta)\le c_2\|t-\vartheta\|^p \qquad \text{for all } t,\vartheta \in B_\varepsilon(\vartheta_{0}),
\]
and moreover $\int_{\Theta}\|\vartheta\|^p\,\mathrm{d} \Pi(\vartheta)<\infty$.

\item \label{Thm4.1a3}
There exist $\delta>0$ and constants $c_3, c_4>0$ and $0<c_5<2$ such that
{$\ell \in \eb{c_3}{c_4}{c_5}{B_\delta(\vartheta_0)}$, see \Cref{d:eg}.}

\item \label{Thm4.1a2} 
For any compact set $K\subseteq \mathbb{R}^d$, we have $Z_n\stackrel{D}{\to}_{P^n_{\vartheta_{0}}} Z$ in $\ell^\infty(K)$, where
\[
Z_n(t):=\int_{ H_n}n^{p/2}\ell\!\left(\vartheta_{0}+\tfrac{t}{n^{1/2}},\,\vartheta_{0}+\tfrac{h}{n^{1/2}}\right)\mathrm{d} P_{h_n\vert {\boldsymbol X}}(h), 
\quad Z(t):=\int_{ \mathbb{R}^d}\ell_0(t,h)\,\mathrm{d} \mathcal{N}_d(Y,I^{-1}_{\vartheta_{0}})(h),
\]
with $Y\sim \mathcal{N}_d(0,I^{-1}_{\vartheta_0})$. The function $\ell_0:\mathbb{R}^d\times \mathbb{R}^d\to [0,\infty)$ is a loss function such that the mapping 
\[
t\mapsto \int_{ \mathbb{R}^d}\ell_0(t,h)\,\mathrm{d} \mathcal{N}_d(Y,I^{-1}_{\vartheta_{0}})(h)
\]
is almost surely continuous and admits a minimizer $G$, which is almost surely unique.
\end{enumerate}
If $\hat{\theta}_n$ is a Bayes estimator with respect to $\ell$ and $\hat{\theta}_n\stackrel{P^n_{\vartheta_{0}}}{\to}\vartheta_{0}$, then
\(
n^{1/2}\bigl(\hat{\theta}_n-\vartheta_{0}\bigr)\stackrel{D}{\to}_{P^n_{\vartheta_0}}G. 
\)
\end{theorem}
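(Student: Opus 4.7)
The plan is to view the rescaled estimator $\hat{h}_n := n^{1/2}(\hat{\theta}_n - \vartheta_0)$ as a minimizer of the random objective $Z_n$ and to apply an argmin continuous mapping theorem of van der Vaart--Wellner type. A change of variables $h = n^{1/2}(\vartheta - \vartheta_0)$ inside the integral defining the Bayes objective shows that $\hat{h}_n$ minimizes $Z_n(t)$ over $t \in H_n$. Assumption~\ref{Thm4.1a2} already supplies weak convergence $Z_n \stackrel{D}{\to}_{P_{\vartheta_0}^n} Z$ in $\ell^\infty(K)$ for every compact $K \subseteq \mathbb{R}^d$; the sample paths of $Z$ are almost surely continuous and $Z$ admits an almost surely unique minimizer $G$. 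The only ingredient missing for the argmin continuous mapping theorem (e.g.~\citet[Theorem~3.2.2]{vanwell}) is therefore \emph{tightness} of $\hat{h}_n$ under $P_{\vartheta_0}^n$: the assumed consistency $\hat{\theta}_n \stackrel{P_{\vartheta_0}^n}{\to} \vartheta_0$ only yields $\hat{h}_n = o_{P_{\vartheta_0}^n}(n^{1/2})$, which is too weak.

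The technical core is thus the stochastic boundedness of $\hat{h}_n$. I would obtain it by sandwiching $Z_n(\hat{h}_n)$ between a fixed-size competitor from above and a lower bound that grows in $\norm{\hat{h}_n}$. For the upper side, use $Z_n(\hat{h}_n) \le Z_n(0)$ and split the defining integral along $\{\norm{h} \le \delta n^{1/2}\}$ and its complement. On the inner region the upper polynomial bound in~\ref{Thm4.1a1} gives $n^{p/2}\ell(\vartheta_0,\vartheta_0+h/n^{1/2}) \le c_2\norm{h}^p$, and by the BvM combined with the prior moment condition $\int_\Theta \norm{\vartheta}^p \diff\Pi(\vartheta) < \infty$ the posterior expectation of $\norm{h}^p$ is stochastically bounded. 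On the outer region, the exponential bound~\ref{Thm4.1a3} with $c_5 < 2$, together with the exponential posterior concentration outside fixed neighborhoods of $\vartheta_0$ that follows from the testing condition of \Cref{B.v.M:}, absorbs the $n^{p/2}$ prefactor. For the lower side, consistency places $\hat{h}_n/n^{1/2} \in B_\varepsilon(0)$ with probability tending to one, so the local lower bound in~\ref{Thm4.1a1} yields, on the event $\{\norm{\hat{h}_n} \ge 2R\}$,
\[
Z_n(\hat{h}_n) \;\ge\; c_1 \int_{\{\norm{h}\le R\}} \norm{\hat{h}_n - h}^p \,\diff P_{h_n\mid\boldsymbol{X}}(h) \;\ge\; 2^{-p} c_1 \norm{\hat{h}_n}^p \cdot P_{h_n\mid\boldsymbol{X}}(\norm{h}\le R).
\]
Choosing $R$ so that $P_{h_n\mid\boldsymbol{X}}(\norm{h}\le R) \ge 1/2$ with high probability---which is possible because BvM reduces the posterior to $\mathcal{N}_d(\Delta_{n,\vartheta_0}(\boldsymbol{X}), I_{\vartheta_0}^{-1})$ with $\Delta_{n,\vartheta_0}(\boldsymbol{X}) = O_{P_{\vartheta_0}^n}(1)$---the right-hand side grows like $\norm{\hat{h}_n}^p$, which together with the $O_{P_{\vartheta_0}^n}(1)$ upper bound forces $\hat{h}_n = O_{P_{\vartheta_0}^n}(1)$.

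Having established tightness, the argmin continuous mapping theorem applies directly: $\hat{h}_n$ minimizes $Z_n$, is $O_{P_{\vartheta_0}^n}(1)$, $Z_n \stackrel{D}{\to}_{P_{\vartheta_0}^n} Z$ in $\ell^\infty(K)$ for every compact $K$, and $Z$ admits an a.s.\ unique minimizer $G$. Consequently $n^{1/2}(\hat{\theta}_n - \vartheta_0) = \hat{h}_n \stackrel{D}{\to}_{P_{\vartheta_0}^n} G$, which is the claim. I expect the tightness step to be the decisive obstacle: controlling the contribution of the possibly exponentially growing loss against the posterior tails uniformly in $n$ is precisely where the restriction $c_5 < 2$ is used essentially, since such sub-Gaussian growth of the loss is exactly compatible with the sub-Gaussian posterior concentration rates furnished by the testing condition in \Cref{B.v.M:}.
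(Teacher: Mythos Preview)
Your proposal is correct and follows essentially the same route as the paper: reduce to the argmin continuous mapping theorem (the paper cites \citet[Corollary~5.58]{v}) and identify tightness of $n^{1/2}(\hat\theta_n-\vartheta_0)$ as the only missing ingredient, then obtain it from the local polynomial bounds in~\ref{Thm4.1a1} together with the exponential tail control~\ref{Thm4.1a3} via the posterior concentration furnished by the testing condition. The only organizational difference is that the paper bounds $Z_n(t)-Z_n(0)$ directly over an annulus $\{rM_n<\|t\|\le \varepsilon n^{1/2}\}$ with $M_n\to\infty$ (so the inner-region difference $c_1\|t-h\|^p-c_2\|h\|^p$ is handled in one stroke), whereas you sandwich $Z_n(\hat h_n)$ between $Z_n(0)=O_{P^n_{\vartheta_0}}(1)$ and a lower bound growing like $\|\hat h_n\|^p$; both are standard and equivalent routes to tightness.
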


Conditions~\ref{Thm4.1a1}--\ref{Thm4.1a2} are weaker than the standard assumptions in the literature and accommodate a broader class of loss functions. In particular, \ref{Thm4.1a1} ensures that the loss behaves locally like a power of the Euclidean norm. Its local lower bound is needed for uniform tightness of $n^{1/2}(\hat{\theta}_n-\vartheta_0)$, but not for the {weak convergence $Z_n\stackrel{D}{\to}_{P^n_{\vartheta_{0}}} Z$ in $\ell^\infty(K)$} required in~\ref{Thm4.1a2} (see \Cref{weak_conv_prop3,corollary1}). { We note, however, that certain losses fall outside our framework. 
 For instance, $\ell(t,\vartheta)=\abs{t-\vartheta}\log(\abs{t-\vartheta}^{-1})$ for $t\neq \vartheta$ (and $0$ otherwise) with {$\Theta=[0,1)$} does not satisfy \ref{Thm4.1a1}.

An extension of our theory to this setting might be possible by incorporating a slowly varying function \citep{BiOs13} into both the upper and lower bounds in \ref{Thm4.1a1}.}

Condition~\ref{Thm4.1a3} ensures that integrals over the sets $\{h\in H_n:\|h\|>M_n\}$ are asymptotically negligible by \Cref{weak_conv_prop1} (in Supplement~\ref{technical}). This can be weakened to
\[
n^{p/2}\ell\!\left(\vartheta_{0}+\tfrac{t}{n^{1/2}},\,\vartheta_{0}+\tfrac{h}{n^{1/2}}\right)\le c_1\exp\!\big(c_2\|t\|^a+c_3\|h\|^a\big)
\]
for all sufficiently large $n$, with constants $c_1,c_2,c_3>0$ and $0<a<2$. This follows from the fact that {for some $\gamma>0$,}
\(
\ell(t,\vartheta)\lesssim \|t-\vartheta\|^p \exp\!\big(\gamma(\|t\|^a+\|\vartheta\|^a)\big), \; t,\vartheta\in \Theta,
\)
which is implied by \ref{Thm4.1a1} and \ref{Thm4.1a3}.

Condition~\ref{Thm4.1a2} is needed to apply the argmin continuous mapping theorem \cite[Corollary~5.58]{v}. Concrete conditions guaranteeing the weak convergence are given in \Cref{weak_conv_prop3} and \Cref{corollary1}, where the link between $\ell$ and $\ell_0$ is also clarified.

The assumed consistency of $\hat{\theta}_n$ ensures that it concentrates in any arbitrarily small neighborhood of $\vartheta_{0}$ with probability tending to one. Consequently, the asymptotic behavior of $n^{1/2}(\hat{\theta}_n-\vartheta_{0})$ can be related to that of the localized and rescaled process
\[
t\longmapsto \int_{n^{1/2}N}n^{p/2}\ell\!\left(\vartheta_{0}+\tfrac{t}{n^{1/2}},\,\vartheta_{0}+\tfrac{h}{n^{1/2}}\right)\mathrm{d} P_{h_n\vert {\boldsymbol X}}(h),
\]
where $N$ is a small neighborhood of the origin. 

Losses of the form $\ell(t,\vartheta)=\|t-\vartheta\|^p$ satisfy the assumptions of \Cref{main_thm1} (see \Cref{sec1_sub3}, and \Cref{weak_conv_prop1,weak_conv_prop2} in Supplement~\ref{technical}). By \citet[Lemma~8.5]{v}, in this case $G=Y$, implying asymptotic efficiency of Bayes estimators under such losses. {Further examples are given below and in the next section.

\begin{example}\label{b:integer:order}
Let $k \ge 2$ be an integer. Suppose that $t \mapsto \ell(t,\vartheta)$ is $(k+1)$-times continuously differentiable in a neighborhood of $\vartheta_0$, and denote its $j$th-order derivative at $\vartheta$  by
    \[
    D^j(\vartheta)[u]:=\sum_{i_1=1}^d\cdots\sum_{i_j = 1}^d\frac{\partial^j \ell(t,\vartheta)}{\partial t_{i_1}\cdots\partial t_{i_j} }\bigg \vert_{t=\vartheta}u_{i_1}u_{i_2} \cdots  u_{i_j},\quad u \in \mathbb{R}^d, \quad  j \in\{1,\ldots, k\}.
    \] 
Assume that at $\vartheta_0$, we have $D^j(\vartheta_0)[u]=0$ if $j<k$ and $D^k(\vartheta_0)[u]>0$, for every $u\in\mathbb{R}^d$. Then, by continuity (see Supplement~\ref{ss:detail:io} for details), there exists $\varepsilon>0$ such that for all $t, \vartheta \in B_\varepsilon(\vartheta_0)$, the upper and lower bounds in \Cref{main_thm1}~\ref{Thm4.1a1} hold with $p=k$, i.e.
\[
c_1\|t-\vartheta\|^k\le \ell(t,\vartheta)\le c_2\|t-\vartheta\|^k \qquad \text{for some constants } c_1,c_2>0.
\]
The above argument can be applied to derive local bounds for $p$-Wasserstein distances between two parametric measures. In particular, we will consider the squared 2-Wasserstein loss in \Cref{loc_q}. For general $p\ge1$, however, it remains an open problem to determine under which conditions certain powers of the $p$-Wasserstein distance {are} $(p+1)$-times continuously differentiable.

In addition, we emphasize that the $(k+1)$-times differentiability requirement is only a sufficient condition, not a necessary one. For example, if $\ell(t,\vartheta)=|t-\vartheta|^k$ with odd $k$, the $k$th-order derivative is not continuous at $t=\vartheta$, yet the displayed condition above is trivially satisfied.
\end{example}}

The next lemma provides sufficient conditions under which \Cref{main_thm1}~\ref{Thm4.1a2} holds. It extends \Cref{weak_conv_prop2} (in Supplement~\ref{technical}) to sequences of loss functions.

\begin{lemma}\label[lemma]{weak_conv_prop3}
Assume the conditions of the BvM theorem (\Cref{B.v.M:}). Let $K\subset \mathbb{R}^d$ be compact, and let $\ell_0:\mathbb{R}^d\times \mathbb{R}^d\to [0,\infty)$ and $\ell_n: H_n\times H_n\to [0,\infty)$ be loss functions satisfying:
\begin{enumerate}[label=(\roman*)]
\item \label{prop5a1}
For some constants $c_1>0$ and $0< c_2<{1}$,
\(
\int_{\Theta}\sup_{t\in K}\ell_n\!\bigl(t,n^{1/2}(\vartheta-\vartheta_{0})\bigr)\,\mathrm{d} \Pi(\vartheta) \lesssim e^{c_1n^{c_2}}.
\)
\item \label{prop5a2}
For all $h\in \mathbb{R}^d$,
\(
\sup_{t\in K}\big|\ell_n(t,h)-\ell_0(t,h)\big|\to 0  \text{ as } n\to \infty.
\)
\item \label{prop5a3}
{For some constants $c_3,c_4>0$ and $0<c_5<2$ it holds $\ell_0\in \eb{c_3}{c_4}{c_5}{\mathbb{R}^d}$ and for all sufficiently large $n$ it holds $\ell_n\in \eb{c_3}{c_4}{c_5}{K}$, see \Cref{d:eg}.}
\end{enumerate}
Then, with
\(
Z_n(t)=\int_{ H_n}\ell_n(t,h)\,\mathrm{d} P_{h_n\vert {\boldsymbol X}}(h) \text{ and } 
Z(t)=\int_{ \mathbb{R}^d}\ell_0(t,h)\,\mathrm{d} \mathcal{N}_d(Y,I^{-1}_{\vartheta_{0}})(h),
\)
where $Y\sim \mathcal{N}_d(0,I^{-1}_{\vartheta_{0}})$, we have $Z_n\stackrel{D}{\to}_{P^n_{\vartheta_{0}}}Z$ in $\ell^\infty(K)$.
\end{lemma}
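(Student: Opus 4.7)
I would reduce the assertion to two asymptotic statements involving the Gaussian-driven intermediate process
\(
\tilde Z_n(t) := \int_{\mathbb{R}^d}\ell_0(t,h)\diff\mathcal{N}_d(\Delta_{n,\vartheta_0}(\boldsymbol{X}), I_{\vartheta_0}^{-1})(h),
\)
namely (a) $\tilde Z_n \stackrel{D}{\to}_{P^n_{\vartheta_0}} Z$ in $\ell^\infty(K)$, and (b) $\sup_{t\in K}\abs{Z_n(t) - \tilde Z_n(t)} \stackrel{P^n_{\vartheta_0}}{\to} 0$. Slutsky's lemma in $\ell^\infty(K)$ then yields the conclusion.

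\textbf{Step (a): CLT and continuous mapping.} The usual CLT applied to the score gives $\Delta_{n,\vartheta_0}(\boldsymbol{X}) \stackrel{D}{\to}_{P^n_{\vartheta_0}} Y \sim \mathcal{N}_d(0, I_{\vartheta_0}^{-1})$. I would then verify that the map $\Phi:\mathbb{R}^d\to\ell^\infty(K)$, $\Phi(z)(t):=\int \ell_0(t,h)\diff\mathcal{N}_d(z, I_{\vartheta_0}^{-1})(h)$, is continuous. Translating $h\mapsto h+z$, this becomes $\int \ell_0(t,h+z)\diff\mathcal{N}_d(0, I_{\vartheta_0}^{-1})(h)$; the exponential bound on $\ell_0$ from~\ref{prop5a3} (with $c_5<2$) is dominated by the quadratic Gaussian exponent, giving an integrable majorant uniformly in $(t,z)$ on any compact set. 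Dominated convergence and compactness of $K$ then give continuity of $\Phi$ uniformly in $t\in K$, and the continuous mapping theorem applied to $\tilde Z_n=\Phi(\Delta_{n,\vartheta_0})$ delivers (a).

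\textbf{Step (b): truncation and four-term split.} For $M>0$ large and $n$ large enough so that $B_M\subseteq H_n$, I decompose
\[
Z_n(t) - \tilde Z_n(t) = A_n^M(t) + B_n^M(t) + C_n^M(t) - D_n^M(t),
\]
where $A_n^M(t):=\int_{B_M}\ell_n(t,h)\diff(P_{h_n\vert\boldsymbol{X}} - \mathcal{N}_d(\Delta_{n,\vartheta_0}, I_{\vartheta_0}^{-1}))(h)$ is handled for fixed $M$ by the BvM theorem: $\ell_n$ is uniformly bounded on $K\times B_M$ by~\ref{prop5a3}, so $\sup_K\abs{A_n^M}\le 2\sup_{K\times B_M}\ell_n\cdot\norm{P_{h_n\vert\boldsymbol{X}}-\mathcal{N}_d(\Delta_{n,\vartheta_0}, I_{\vartheta_0}^{-1})}_\TV \stackrel{P^n_{\vartheta_0}}{\to}0$. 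The shift term $B_n^M(t):=\int_{B_M}(\ell_n-\ell_0)(t,h)\diff\mathcal{N}_d(\Delta_{n,\vartheta_0}, I_{\vartheta_0}^{-1})(h)$ vanishes by~\ref{prop5a2}, dominated convergence on $B_M$ (majorant from~\ref{prop5a3}, Gaussian mass at most $1$), and tightness of $\Delta_{n,\vartheta_0}$. The Gaussian tail $D_n^M(t):=\int_{\mathbb{R}^d\setminus B_M}\ell_0(t,h)\diff\mathcal{N}_d(\Delta_{n,\vartheta_0}, I_{\vartheta_0}^{-1})(h)$ is dominated by $c_3\sup_K e^{c_4\norm{t}^{c_5}}\int_{\norm{h}>M}e^{c_4\norm{h}^{c_5}}\diff\mathcal{N}_d(\Delta_{n,\vartheta_0}, I_{\vartheta_0}^{-1})(h)$, which tends to $0$ as $M\to\infty$, uniformly on a tight event for $\Delta_{n,\vartheta_0}$, since $c_5<2$ makes the Gaussian tail dominate.

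\textbf{Main obstacle: the posterior tail $C_n^M$.} The crux is to show $\sup_{t\in K}C_n^M(t)\stackrel{P^n_{\vartheta_0}}{\to}0$ as first $n\to\infty$ then $M\to\infty$. Applying the Bayes formula and the substitution $h=\sqrt n(\vartheta-\vartheta_0)$ gives
\[
\sup_{t\in K}C_n^M(t) \le \frac{\int_{\norm{\vartheta-\vartheta_0}>M/\sqrt n}\sup_{t\in K}\ell_n\bigl(t,\sqrt n(\vartheta-\vartheta_0)\bigr)L_n(\vartheta)\diff\Pi(\vartheta)}{\int L_n(\vartheta)\diff\Pi(\vartheta)}, \qquad L_n(\vartheta):=\prod_{i=1}^n p_\vartheta(X_i).
\]
The denominator admits the classical lower bound $\int L_n\diff\Pi \ge L_n(\vartheta_0) e^{-\xi n}$ in $P^n_{\vartheta_0}$-probability for any $\xi>0$, from positivity and continuity of $\pi$ at $\vartheta_0$ via a Kullback-Leibler neighborhood argument. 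For the numerator I would split at a fixed $\eta>0$: on $\{\norm{\vartheta-\vartheta_0}>\eta\}$ the existence of exponentially consistent tests (obtained from the tests in the BvM theorem combined with differentiability in quadratic mean in the standard Le Cam fashion) together with Fubini yields $E_{P^n_{\vartheta_0}}[(1-\phi_n)\int_{\norm{\vartheta-\vartheta_0}>\eta}\sup_t\ell_n\cdot L_n(\vartheta)/L_n(\vartheta_0)\diff\Pi] \le e^{-Cn}\int\sup_t\ell_n\diff\Pi \lesssim e^{c_1 n^{c_2}-Cn}$, which vanishes because $c_2<1$. On the thin annulus $\{M/\sqrt n<\norm{\vartheta-\vartheta_0}\le\eta\}$, the pointwise bound~\ref{prop5a3} gives $\sup_t\ell_n(t,h)\lesssim e^{c_4\norm{h}^{c_5}}$ in the normalized scale, and the posterior mass of this annulus is approximated via BvM by the Gaussian mass $\mathcal{N}_d(\Delta_{n,\vartheta_0}, I_{\vartheta_0}^{-1})(\{\norm{h}>M\})$; the factor $e^{c_4\norm{h}^{c_5}}$ is integrable against Gaussian tails since $c_5<2$, and the resulting bound vanishes as $M\to\infty$ on a tight event for $\Delta_{n,\vartheta_0}$. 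The decisive quantitative feature, making the lemma work, is that the sub-exponential growth rate $c_2<1$ in~\ref{prop5a1} is strictly beaten by the linear-exponent likelihood decay $e^{-Cn}$ produced by the tests; this balancing is the main technical difficulty of the proof.
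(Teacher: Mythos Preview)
Your global architecture is essentially the paper's: truncate at a radius $M$, swap the posterior for $\mathcal{N}_d(\Delta_{n,\vartheta_0},I_{\vartheta_0}^{-1})$ on the bounded part via BvM, pass $\ell_n\to\ell_0$, control the Gaussian tail via~\ref{prop5a3}, and control the posterior tail separately. The paper organises this slightly differently: it keeps $\ell_n$ in the truncated Gaussian process $W_{n,M}(t)=\int_{\|h\|\le M}\ell_n(t,h)\,\diff\mathcal{N}_d(\Delta_{n,\vartheta_0},I_{\vartheta_0}^{-1})(h)$ and obtains $W_{n,M}\stackrel{D}{\to}W_M$ in one stroke via the \emph{extended} continuous mapping theorem applied to the $n$-dependent operators $\mu\mapsto\int_{\|h\|\le M}\ell_n(\cdot,h)\,\diff\mathcal{N}_d(\mu,I_{\vartheta_0}^{-1})$, thereby merging your step~(a) and your $B_n^M$ term into a single step. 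It then runs a diagonal argument (choosing $M_n\to\infty$ with $Z_{n,M_n}\stackrel{D}{\to}Z$) rather than your fixed-$M$-then-$M\to\infty$ scheme. These are cosmetic differences; both routes are valid.

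The genuine gap is in your treatment of the posterior tail $C_n^M$ on the ``thin annulus'' $\{M<\|h\|\le\eta\sqrt n\}$. You propose to transfer the integral of $e^{c_4\|h\|^{c_5}}$ from the posterior to the Gaussian via BvM and then invoke integrability against Gaussian tails. But BvM supplies only total-variation closeness, and on this annulus the integrand is unbounded---its supremum is $e^{c_4(\eta\sqrt n)^{c_5}}\to\infty$---so the elementary bound $\bigl|\int f\,\diff(P-Q)\bigr|\le\|f\|_\infty\,\|P-Q\|_{\TV}$ is useless here. The paper sidesteps this by invoking a separate technical result (\Cref{weak_conv_prop1}), whose hypotheses are exactly conditions~\ref{prop5a1} and~\ref{prop5a3} and which yields $\int_{\|h\|>M_n}g_n(h)\,\diff P_{h_n\vert\boldsymbol X}(h)\to 0$ in probability for \emph{any} $M_n\to\infty$. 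A correct direct argument (as in van der Vaart's Theorem~10.8, to which the paper defers) requires a further intermediate truncation at a radius $M_n'$ growing slowly enough that $e^{c_4(M_n')^{c_5}}\cdot\|P_{h_n\vert\boldsymbol X}-\mathcal{N}_d(\Delta_{n,\vartheta_0},I_{\vartheta_0}^{-1})\|_{\TV}\to 0$, together with an explicit LAN-based bound on the likelihood ratio over $\{M_n'<\|h\|\le\eta\sqrt n\}$, where the quadratic term $-\tfrac12 h^\top I_{\vartheta_0}h$ dominates $c_4\|h\|^{c_5}$ because $c_5<2$. Your far-region argument (tests plus~\ref{prop5a1} with $c_2<1$) is correct and is indeed where that exponent restriction enters.
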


In \Cref{weak_conv_prop3}, condition~\ref{prop5a2} links $\ell_n$ and $\ell_0$, ensuring convergence of $Z_n$ to $Z$. Condition~\ref{prop5a3} controls the growth of the loss functions, guarantees boundedness on compact sets, and allows the use of \Cref{weak_conv_prop1} (in Supplement~\ref{technical}). In this paper, \Cref{weak_conv_prop3} is applied with
\[
\ell_n(t,h)=n^{p/2}\,\ell\!\left(\vartheta_{0}+\tfrac{t}{n^{1/2}},\,\vartheta_{0}+\tfrac{h}{n^{1/2}}\right),
\]
for which $\ell_0(t,h)$ is typically the directional Hadamard derivative of $\ell$ at $(\vartheta_0,\vartheta_0)$ in direction $(t,h)$. In this case, the weak limit $G$ minimizes, almost surely, the expected slope for $p=1$, the expected curvature for $p=2$, and so on, of $\ell$ at $(\vartheta_0,\vartheta_0)$ in direction $(G,h)$, with $h\sim \mathcal{N}_d(Y,I^{-1}_{\vartheta_0})$ and $Y\sim \mathcal{N}_d(0,I^{-1}_{\vartheta_0})$ (see also Supplement~\ref{ss:had}).

\begin{proposition}\label[proposition]{corollary1}
Assume the conditions of the BvM theorem (\Cref{B.v.M:}). Let $K\subset \mathbb{R}^d$ be compact and $\ell,\ell_0:\Theta\times \Theta\to [0,\infty)$ be loss functions satisfying:
\begin{enumerate}[label=(\roman*)]
\item \label{cor1a1}
There exist $p>0$ and $\varepsilon>0$ sufficiently small such that, for some $c_1>0$,
\[
\ell(t,\vartheta)\le c_1\|t-\vartheta\|^p \qquad \text{for all }\, t,\vartheta\in B_\varepsilon(\vartheta_{0}).
\]
\item \label{cor1a2} 
For some $\delta>0$ sufficiently small,
\(
\int_{\Theta}\sup_{t\in B_\delta(\vartheta_0)}\ell(t,\vartheta)\,\mathrm{d} \Pi(\vartheta)<\infty.
\)
\item \label{cor1a3}
For all $h\in \mathbb{R}^d$,
\(
\sup_{t\in K}\left|\varepsilon^{-p}\ell(t\varepsilon+\vartheta_{0},h\varepsilon+\vartheta_{0})-\ell_0(t,h)\right|\to 0  \text{ as } \varepsilon\to 0,\, \varepsilon>0.
\)
\item \label{cor1a4}
There exist $c_2, c_3>0$, $0< c_4<2$ and $\delta>0$ such that $\ell\in \eb{c_2}{c_3}{c_4}{B_\delta(\vartheta_0)} $ and $\ell_0\in \eb{c_2}{c_3}{c_4}{\mathbb{R}^d} $, see \Cref{d:eg}. 
\end{enumerate}
Then, with
\[
Z_n(t)=\int_{ H_n}n^{p/2}\,\ell\!\left(\tfrac{t}{n^{1/2}}+\vartheta_{0},\tfrac{h}{n^{1/2}}+\vartheta_{0}\right)\mathrm{d} P_{h_n\vert {\boldsymbol X}}(h),\quad 
Z(t)=\int_{ \mathbb{R}^d}\ell_0(t,h)\,\mathrm{d} \mathcal{N}_d(Y,I^{-1}_{\vartheta_{0}})(h),
\]
where $Y\sim \mathcal{N}_d(0,I^{-1}_{\vartheta_{0}})$, we have $Z_n\stackrel{D}{\to}_{P^n_{\vartheta_{0}}}Z$ in $\ell^\infty(K)$.
\end{proposition}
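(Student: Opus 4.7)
The strategy is to reduce the statement to \Cref{weak_conv_prop3}. I would define the rescaled loss
\[
\ell_n(t,h) := n^{p/2}\,\ell\!\left(\vartheta_0+\tfrac{t}{n^{1/2}},\,\vartheta_0+\tfrac{h}{n^{1/2}}\right),
\]
which, for $n$ large enough, is defined on $H_n\times H_n$ and produces exactly $Z_n(t)=\int_{H_n}\ell_n(t,h)\,\mathrm{d} P_{h_n\vert\boldsymbol{X}}(h)$. The plan is to verify the three hypotheses of \Cref{weak_conv_prop3} with this $\ell_n$ together with the given $\ell_0$; the claim $Z_n\stackrel{D}{\to}_{P^n_{\vartheta_0}}Z$ in $\ell^\infty(K)$ then follows at once.

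Conditions (i) and (ii) of \Cref{weak_conv_prop3} are routine. For (ii), taking $\varepsilon=n^{-1/2}$ in \ref{cor1a3} and using the identity $\ell_n(t,h)=\varepsilon^{-p}\ell(t\varepsilon+\vartheta_0,h\varepsilon+\vartheta_0)$ yields $\sup_{t\in K}|\ell_n(t,h)-\ell_0(t,h)|\to 0$ for each fixed $h\in\mathbb{R}^d$. For (i), a change of variables gives
\[
\int_\Theta\sup_{t\in K}\ell_n\!\bigl(t,n^{1/2}(\vartheta-\vartheta_0)\bigr)\,\mathrm{d}\Pi(\vartheta)=n^{p/2}\int_\Theta\sup_{t\in K}\ell\!\bigl(\vartheta_0+t/n^{1/2},\vartheta\bigr)\,\mathrm{d}\Pi(\vartheta),
\]
and for $n$ large enough $\vartheta_0+K/n^{1/2}\subseteq B_\delta(\vartheta_0)$, so \ref{cor1a2} bounds the integral by a finite constant; the remaining $n^{p/2}$ is $\lesssim e^{n^{c_2}}$ for any $c_2\in(0,1)$, which suffices.

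The substantive step, and the main obstacle, is condition (iii): producing a single triple $(c_3',c_4',c_5')$ with $c_5'\in(0,2)$, independent of $n$, such that $\ell_0\in\eb{c_3'}{c_4'}{c_5'}{\mathbb{R}^d}$ and $\ell_n\in\eb{c_3'}{c_4'}{c_5'}{K}$ for all sufficiently large $n$. The difficulty is the spurious $n^{p/2}$ prefactor that appears when one applies \ref{cor1a4} naively to $\ell_n$. To absorb it I would split $H_n$ into $\{\|h\|\le\varepsilon n^{1/2}\}$ and $\{\|h\|>\varepsilon n^{1/2}\}$, with $\varepsilon$ as in \ref{cor1a1}. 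On the first region the polynomial bound in \ref{cor1a1} gives the $n$-free estimate $\ell_n(t,h)\le c_1\|t-h\|^p$. On the second, the inequality $n^{p/2}\le\varepsilon^{-p}\|h\|^p$ eats the prefactor, and applying \ref{cor1a4} (together with $(x+y)^{c_4}\lesssim x^{c_4}+y^{c_4}$ and $\|h\|^{c_4}/n^{c_4/2}\le\|h\|^{c_4}$) yields $\ell_n(t,h)\lesssim\|h\|^p\exp(c\|h\|^{c_4})$ with constants independent of $n$, where $c_4<2$ comes from \ref{cor1a4}. Combining the two regions and invoking the elementary inequality $(1+\|h\|)^p\exp(c\|h\|^{c_4})\le c_3'\exp(c_4'\|h\|^{c_5'})$, valid for any fixed $c_5'\in(c_4,2)$ with suitably chosen $c_3',c_4'$, produces the required EB bound for $\ell_n$ uniformly in $t\in K$ and $h\in H_n$. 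Enlarging the constants once more (using $\|h\|^{c_4}\le 1+\|h\|^{c_5'}$) arranges that the same triple also bounds $\ell_0$ on $\mathbb{R}^d$, after which \Cref{weak_conv_prop3} delivers the conclusion.
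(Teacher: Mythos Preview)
Your proposal is correct and follows essentially the same route as the paper's proof: reduce to \Cref{weak_conv_prop3} by defining $\ell_n(t,h)=n^{p/2}\ell(\vartheta_0+t/n^{1/2},\vartheta_0+h/n^{1/2})$, and absorb the spurious $n^{p/2}$ prefactor by splitting $H_n$ into a near region (where the local polynomial bound \ref{cor1a1} kills $n^{p/2}$ directly) and a far region (where $n^{p/2}\lesssim\|h\|^p$ feeds into the exponential bound from \ref{cor1a4}). The only cosmetic difference is that the paper splits on $\sup_{t\in K}\|t-h\|$ with threshold $\delta n^{1/2}/2$ rather than on $\|h\|$ with threshold $\varepsilon n^{1/2}$, and leaves the verification of conditions (i) and (ii) of \Cref{weak_conv_prop3} implicit, whereas you spell them out.
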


If two loss functions $\ell_1$ and $\ell_2$ satisfy the assumptions of Proposition~\ref{corollary1} with the same $p>0$, then any positive linear combination
\(
\tilde{\ell}(t,\vartheta) =a\,\ell_1(t,\vartheta)+b\,\ell_2(t,\vartheta),\; a,b>0,
\)
also satisfies the assumptions.

\section{Asymptotic efficiency of Bayes estimators under locally quadratic losses}\label{loc_q}
We now consider the case where the loss function is locally quadratic.

\begin{assumption}\label[assumption]{assumption3}
Let $A:\Theta\to \mathbb{R}^{d\times d}$ be a matrix-valued mapping whose components are continuous on $\Theta$, and suppose $A(\vartheta_0)$ is positive definite. We assume that for each $\vartheta\in\Theta$, the mapping $t\mapsto\ell(t,\vartheta)$ admits the local expansion
\begin{equation}\label{e:quad:expan}
\ell(t,\vartheta) = \langle t-\vartheta,\, A(\vartheta)(t-\vartheta) \rangle + \xi(t,\vartheta),
\end{equation}
where the remainder $\xi(t,\vartheta)$ satisfies
\(
|\xi(t,\vartheta)| \lesssim \|t-\vartheta\|^{c_1}  \text{ for all } t,\vartheta\in B_\varepsilon(\vartheta_0),
\)
for some $\varepsilon>0$ and $c_1>2$.
Moreover, the following hold:
\begin{enumerate}[label=(\roman*)]
\item\label{i:ass3:1} There exists $\delta>0$ such that
\(
\int_\Theta \sup_{t\in B_\delta(\vartheta_0)} \ell(t,\vartheta)\,\mathrm{d}\Pi(\vartheta) < \infty.
\)
\item\label{i:ass3:2}  {\rev There exist $\delta, c_2, c_3>0$ and $0< c_4<2$ such that $\ell\in \eb{c_2}{c_3}{c_4}{B_\delta(\vartheta_0)} $, see~\Cref{d:eg}.}
\iffalse %old 
There exist $\delta>0$, $c_2>0$, and $0< a<2$ such that for all $t\in B_\delta(\vartheta_{0})$ and $\vartheta\in\Theta$,
\begin{align*} 
|\xi(t,\vartheta)| \lesssim \exp\big(c_2(\|t\|^{a}+\|\vartheta\|^{a})\big),
\text{ and } \|A(\vartheta)\|_{\F} \lesssim \exp\big(c_2\|\vartheta\|^{a}\big).
\end{align*}
\fi
\end{enumerate}
\end{assumption} 

%The growth bounds on $\xi$ and $A$ in \Cref{assumption3} are mild and are satisfied in many settings. 
{\rev The growth bounds in \Cref{assumption3}~\ref{i:ass3:1}--\ref{i:ass3:2} are rather mild and hold in many settings.} The key requirement is the expansion~\eqref{e:quad:expan}. If $t\mapsto\ell(t,\vartheta)$ is thrice continuously differentiable for every $\vartheta\in \Theta$, then \eqref{e:quad:expan} follows from the second-order Taylor expansion whenever $\Theta\subseteq\mathbb{R}^d$ is convex. In this case, \Cref{assumption3} holds with $c_1=3$ and $A(\vartheta) = \frac{1}{2} \nabla^2_t \ell(t,\vartheta)\big|_{t=\vartheta}$, provided $A(\vartheta_0)$ is positive definite.

\Cref{assumption3} ensures that the loss function $\ell$ behaves locally like a quadratic form, that is, there exists $\delta>0$ such that for all $t,\vartheta \in B_\delta(\vartheta_0)$,
\begin{equation}\label{e:up:lo:quad}
\|t-\vartheta\|^2 \lesssim \ell(t,\vartheta) \lesssim \|t-\vartheta\|^2,
\end{equation}
{\rev see \Cref{sss:technical}.} Given the local expansion \eqref{e:quad:expan}, the sequence of loss functions
\begin{align*}
\ell_n(t,h) 
&= n^{p/2} \ell\!\left(\tfrac{t}{n^{1/2}}+\vartheta_{0},\, \tfrac{h}{n^{1/2}}+\vartheta_{0}\right) \\
&= n^{{p}/{2}-1} \langle t-h,\, A(\tfrac{h}{n^{1/2}}+\vartheta_{0})(t-h) \rangle
\;+\; n^{p/2}\,\xi\!\left(\tfrac{t}{n^{1/2}}+\vartheta_{0},\, \tfrac{h}{n^{1/2}}+\vartheta_{0}\right)
\end{align*}
satisfies condition~\ref{cor1a3} in \Cref{corollary1} with $p=2$ and a quadratic loss
\(
\ell_0(t,h) = \langle t-h,\, A(\vartheta_0)(t-h) \rangle.
\)
Thus, it is sufficient for $\ell$ to be locally quadratic in order for the entire process
\(
Z_n(t) = \int_{H_n} \ell_n(t,h)\,\mathrm{d}P_{h_n\vert {\boldsymbol X}}
\)
to converge to
\(
Z(t) = \int_{\mathbb{R}^d} \langle t-h,\, A(\vartheta_0)(t-h) \rangle\,\mathrm{d}\mathcal{N}_d(Y,I^{-1}_{\vartheta_0})(h),
\)
the limiting process for the purely quadratic case. This highlights that controlling the local behavior of the loss suffices to deduce the asymptotic behavior of the entire process. This observation can be used to adapt the theory to other sequences of loss functions.

\begin{assumption}\label[assumption]{assumption2}
The conditions of the BvM theorem (\Cref{B.v.M:}) hold, and the prior has a finite second moment, i.e.\
\(
\int_\Theta \|\vartheta\|^{2}\,\mathrm{d} \Pi(\vartheta) < \infty.
\)
\end{assumption}

\begin{theorem}\label{main_thm2}
Let Assumptions~\ref{assumption3}--\ref{assumption2} hold, and suppose the corresponding Bayes estimator $\hat{\theta}_n$ is consistent in the sense that $\hat{\theta}_n \stackrel{P_{\vartheta_{0}}^n}{\to} \vartheta_0$. Then, as $n\to\infty$,
\[
n^{1/2}\big(\hat{\theta}_n({\boldsymbol X})-\vartheta_0\big) \stackrel{D}{\longrightarrow}_{P^n_{\vartheta_0}} Y \sim \mathcal{N}_d(0,I^{-1}_{\vartheta_0}),
\]
i.e.\ $\hat{\theta}_n$ is asymptotically efficient.
\end{theorem}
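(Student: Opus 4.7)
The strategy is to reduce \Cref{main_thm2} to \Cref{main_thm1} applied with $p=2$ and the quadratic limit loss
\[
\ell_0(t,h)=\langle t-h,\,A(\vartheta_0)(t-h)\rangle,
\]
and then to identify the almost-sure minimizer $G$ of $t\mapsto \int\ell_0(t,h)\,\mathrm{d}\mathcal{N}_d(Y,I^{-1}_{\vartheta_0})(h)$ as $Y$ itself, from which asymptotic efficiency follows immediately. The BvM hypotheses and the consistency of $\hat{\theta}_n$ are already in hand, so only conditions \ref{Thm4.1a1}--\ref{Thm4.1a2} of \Cref{main_thm1} require verification.

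For \ref{Thm4.1a1} the two-sided local bound \eqref{e:up:lo:quad} on some $B_\varepsilon(\vartheta_0)$ was derived in the discussion following \Cref{assumption3} using positive definiteness of $A(\vartheta_0)$, continuity of $\vartheta\mapsto A(\vartheta)$, and the remainder bound $|\xi(t,\vartheta)|\lesssim\|t-\vartheta\|^{c_1}$ with $c_1>2$; the prior moment $\int\|\vartheta\|^2\,\mathrm{d}\Pi<\infty$ is \Cref{assumption2}, and the integrability of the supremum is \Cref{assumption3}(i). For \ref{Thm4.1a3}, on a sufficiently small $B_\delta(\vartheta_0)$ the expansion \eqref{e:quad:expan} together with the growth bounds in \Cref{assumption3}(ii) yield
\[
\ell(t,\vartheta)\lesssim \|t-\vartheta\|^{2}e^{c_2\|\vartheta\|^{a}}+e^{c_2(\|t\|^{a}+\|\vartheta\|^{a})}\lesssim e^{c(\|t\|^{a'}+\|\vartheta\|^{a'})}
\]
for any chosen $a'\in(a,2)$, since polynomial factors can be absorbed into an exponential with exponent $a'>a$. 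Hence $\ell\in\eb{c}{c}{a'}{B_\delta(\vartheta_0)}$ in the sense of \Cref{d:eg}.

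Condition \ref{Thm4.1a2} is verified by invoking \Cref{corollary1} with $p=2$ and $\ell_0$ as above. Items \ref{cor1a1}, \ref{cor1a2}, and the exponential boundedness of $\ell$ in \ref{cor1a4} have just been shown, while $\ell_0(t,h)\lesssim\|t-h\|^2$ renders the exponential bound on $\ell_0$ trivial. The core step is the uniform convergence \ref{cor1a3}. Substituting \eqref{e:quad:expan} yields, for each fixed $h\in\mathbb{R}^d$,
\[
\varepsilon^{-2}\ell(t\varepsilon+\vartheta_0,h\varepsilon+\vartheta_0)=\langle t-h,\,A(h\varepsilon+\vartheta_0)(t-h)\rangle+\varepsilon^{-2}\xi(t\varepsilon+\vartheta_0,h\varepsilon+\vartheta_0).
\]
Over a compact $K\ni t$, the first summand tends to $\langle t-h,A(\vartheta_0)(t-h)\rangle$ uniformly by continuity of $A$, and the remainder is dominated by $\varepsilon^{c_1-2}\|t-h\|^{c_1}$, which vanishes uniformly on $K$ because $c_1>2$.

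Finally, it remains to compute $G$. For $h\sim\mathcal{N}_d(Y,I^{-1}_{\vartheta_0})$, standard manipulations of quadratic forms give
\[
\int\langle t-h,A(\vartheta_0)(t-h)\rangle\,\mathrm{d}\mathcal{N}_d(Y,I^{-1}_{\vartheta_0})(h)=\langle t-Y,A(\vartheta_0)(t-Y)\rangle+\tr\bigl(A(\vartheta_0)I^{-1}_{\vartheta_0}\bigr),
\]
which by positive definiteness of $A(\vartheta_0)$ is uniquely minimized at $t=Y$ almost surely. Hence the continuity/unique-minimizer requirement in \Cref{main_thm1}\ref{Thm4.1a2} is met and $G=Y\sim\mathcal{N}_d(0,I_{\vartheta_0}^{-1})$, so \Cref{main_thm1} delivers the stated $n^{1/2}(\hat{\theta}_n-\vartheta_0)\stackrel{D}{\to}_{P_{\vartheta_0}^n}\mathcal{N}_d(0,I_{\vartheta_0}^{-1})$. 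The only genuinely technical steps are the exponential-bound bookkeeping for \ref{Thm4.1a3} and the uniform-convergence step \ref{cor1a3}, and both are essentially routine once the structural bounds of \Cref{assumption3} are in place.
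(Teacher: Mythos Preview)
Your proof is correct and follows essentially the same route as the paper's: both reduce \Cref{main_thm2} to \Cref{main_thm1} with $p=2$ and $\ell_0(t,h)=\langle t-h,A(\vartheta_0)(t-h)\rangle$, verifying condition~\ref{Thm4.1a2} via \Cref{corollary1} and the key uniform-convergence computation for~\ref{cor1a3}. You are more explicit than the paper in checking the exponential-bound condition~\ref{Thm4.1a3} and in identifying $G=Y$ by direct computation of the Gaussian integral of the quadratic form, whereas the paper absorbs these into a brief ``which would yield the result''.
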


If $\Theta$ is convex and $\psi:\Theta\to\mathbb{R}$ is a thrice continuously differentiable convex function, then many Bregman divergence losses
\(
\ell(t,\vartheta) = d_\psi(t,\vartheta)
:= \psi(t) - \psi(\vartheta) - \langle t-\vartheta,\, \nabla_\vartheta\psi(\vartheta) \rangle
\)
satisfy the assumptions of \Cref{main_thm2}.

\section{Examples} \label{sec5}

We review here several popular loss functions that behave locally like a polynomial and thus fall within our framework.  A natural and widely used class is given by
\(
\ell(t,\vartheta)=\norm{t-\vartheta}^p, \; p > 0,
\)
which clearly satisfies the assumptions of \Cref{main_thm1}.  Special cases include the posterior mean for $p=2$ and the posterior (geometric) median for $p=1$.  The asymptotic behavior of Bayes estimators under such losses has been extensively studied, and  in fact, stronger results are known.  For instance, as shown in \citet{chao}, the difference between a Bayes estimator corresponding to these loss functions and the maximum likelihood estimator converges to zero at a rate faster than $n^{-1/2}$.  This directly implies that the two estimators share the same asymptotic distribution. 

Our focus, however, is on loss functions beyond this classical and well-understood setting. 
In particular, we study a broad class of losses induced by distances between probability measures that satisfy \Cref{assumption3} but are not encompassed by most existing results.  

\subsection{Locally quadratic loss functions}\label{ss:quad:loss}
\subsubsection{Intrinsic loss functions}
Examples of such losses include the so-called \emph{intrinsic losses} \citep{intrinsic}, which depend on the parametrization of the statistical model.  
A primary motivation for using intrinsic losses is the \emph{parametrization invariance} of the resulting Bayes estimators.

If $\hat{\theta}_n$ is a Bayes estimator with respect to a loss function $\ell$, we say that $\hat{\theta}_n$ is \emph{parametrization invariant} if, for any measurable function $g:\Theta \to \mathbb{R}^d$, the transformed estimator $g(\hat{\theta}_n)$ is a Bayes estimator for $g(\theta)$ under the same loss $\ell$.  
In general, Bayes estimators do not enjoy this property.  
For instance, under the quadratic loss $\ell(t,\vartheta) = \norm{t-\vartheta}^2$, the Bayes estimator is the posterior mean $\int_{\Theta} \vartheta \, \diff P_{\theta \mid {\boldsymbol X}}(\vartheta)$.  
The Bayes estimator of $g(\theta)$ would be $\int_{\Theta} g(\vartheta) \, \diff P_{\theta \mid {\boldsymbol X}}(\vartheta)$, which typically differs from $g\!\bigl( \int_{\Theta} \vartheta \, \diff P_{\theta \mid {\boldsymbol X}}(\vartheta) \bigr)$. 

In contrast, for an intrinsic loss $\ell$, the corresponding Bayes estimator \emph{is} parametrization invariant, as shown in \citet[Lemma~6.2]{intrinsic}.  
Formally, let $\mathcal{M}=\{P_\vartheta : \vartheta \in \Theta\}$ be a fixed parametric family of probability measures, each absolutely continuous with respect to Lebesgue measure on $\mathbb{R}^d$.  
The parametrization mapping $\Psi:\Theta \to \mathcal{M}$ is given by $\vartheta \mapsto P_\vartheta$.  
We denote by $p_t$ the Lebesgue density of $P_t$.  
A few examples of intrinsic losses that satisfy the assumptions of \Cref{main_thm2} for some suitable parametric family are:
\begin{enumerate}[label=(\Alph*)]
	\item \label{i:hellinger}
	Parametric squared Hellinger distance:
	\[
	\ell_{H}(t,\vartheta)=H^2\bigl(\Psi(t),\Psi(\vartheta)\bigr)=\frac{1}{2}\int_{ \mathcal{X}}\left(\sqrt{p_t(x)}-\sqrt{p_\vartheta(x)}\right)^2\diff x;
	\]
	\item \label{i:KL}
	Parametric Kullback--Leibler divergence:
	\[
	\ell_{\mathrm{KL}}(t,\vartheta)=\mathrm{KL}\bigl(\Psi(\vartheta)\mid \Psi(t)\bigr)=\int_{\mathcal{X}}\log\!\left(\frac{p_\vartheta(x)}{p_t(x)}\right)\diff P_\vartheta(x);
	\]
	\item \label{i:2wass}
	Parametric squared 2-Wasserstein distance:
	\[
	\ell_{W_2}(t,\vartheta)=W_2^2\bigl(\Psi(t),\Psi(\vartheta)\bigr)=\inf_{\pi\in \Gamma(P_t,P_\vartheta)}\int_{ \mathcal{X}^2}\norm{x-y}^2\diff \pi(x,y),
	\]
	where 
	\begin{multline}\label{e:plans}
	\Gamma(P,Q)=\bigl\{\pi\in \mathcal{P}(\mathcal{X}\times \mathcal{X}):P(B)=\pi(B\times \mathcal{X}), \\Q(B)=\pi(\mathcal{X}\times B)\ \text{for all measurable }B\subseteq\mathcal{X}\bigr\}
	\end{multline}
	and $\mathcal{P}(\mathcal{X}\times \mathcal{X})$ is the set of all probability measures on $\mathcal{X}\times \mathcal{X}$ with finite second moment.
\end{enumerate}
It is shown in \citet[Lemma~4.1]{intrinsic} that \ref{i:hellinger} and \ref{i:KL} behave locally like a quadratic function.

\subsubsection{2-Wasserstein distance}
Since the behavior of intrinsic loss functions depends on the underlying parametric family, we give an example of a parametric family for which $\ell_{W_2}$ in \ref{i:2wass} satisfies the conditions of \Cref{main_thm2} beyond the location–scale setting, in which case the loss function reduces to the Euclidean distance.

\begin{example}
	Consider the family of Pareto distributions with unknown shape parameter, fixed scale parameter equal to $1$, and parameter space $\Theta=[a,\infty)$ for some fixed $a>2$. Then
	\[
	\ell_{W_2}(t,\vartheta)=\frac{2(t-\vartheta)^2}{(t\vartheta-t-\vartheta)(t-2)(\vartheta-2)}=\frac{(t-\vartheta)^2A(\vartheta)}{2}+\xi(t,\vartheta),
	\]
	where
	\[
	A(\vartheta)=\frac{4}{\vartheta(\vartheta-2)^3}\quad\text{and}\quad \xi(t,\vartheta)=2(t-\vartheta)^2\cdot \frac{\vartheta(\vartheta-2)^2-(t-2)(t\vartheta-t-\vartheta)}{(t-2)(\vartheta-2)^3\vartheta(t\vartheta-t-\vartheta)}.
	\]
	For any $t,\vartheta\in \Theta$, we have \(0\le \abs{A(\vartheta)}\le 4/(a(a-2)^3)\), and for some constants $c_1,c_2>0$,
	\[
	\abs{\xi(t,\vartheta)}\le2(t-\vartheta)^2 \frac{\abs{\vartheta(\vartheta-2)^2-(t-2)(t\vartheta-t-\vartheta)}}{a^2(a-2)^5}
		\lesssim \exp(c_1t+c_2\vartheta).
	\]
	Moreover, we have
	\[
	\nabla_t\ell_{W_2}(t,\vartheta)=\frac{2(t-\vartheta)\bigl((2\vartheta-1)t-3\vartheta\bigr)}{(t-2)^2\bigl((\vartheta-1)t-\vartheta\bigr)^2}
	\begin{cases}
		>0,&\text{if }t>\vartheta,\\
		<0,&\text{if }t<\vartheta,\\
		=0,&\text{if }t=\vartheta,
	\end{cases}
	\]
	and thus the loss function has a well-separated minimum, which together with  
 the triangle inequality implies \Cref{mon}.  
	Thus, the conditions of \Cref{main_thm2} are satisfied.
\end{example}

More general differentiability conditions for loss functions induced by the 2-Wasserstein distance, together with concrete examples,  are given in Supplement~\ref{diff_ws}.

\subsubsection{Sinkhorn divergence} 
Entropy-regularized Wasserstein distances are widely used due to their computational efficiency {(see e.g.~\citealp{Cuturi2019}).}  
We give a concrete example to show that such loss functions also fit within our framework.  
For a regularization parameter $\lambda>0$ and a parametric family $\{P_\vartheta:\vartheta \in \Theta\}$ on $\mathcal{X}$ with finite second moments, define the Sinkhorn divergence $S_\lambda$ with squared Euclidean cost between $P_t$ and $P_\vartheta$ as
\[
S_\lambda\!\left(P_t,P_\vartheta\right)= \inf_{\pi \in \Gamma(P_t,P_\vartheta)}\int_{\mathcal{X}^2} \norm{x-y}^2\diff\pi(x,y)+ \lambda\, \mathrm{KL}\!\left(\pi \mid P_t\otimes P_\vartheta\right),
\] 
where $\Gamma(P_t,P_\vartheta)$ is given in \eqref{e:plans}.  
The Sinkhorn divergence is not a loss function in our sense, since in general $S_\lambda(P_t,P_t)\neq 0$.  
This can be corrected by defining the {\emph{centered}} Sinkhorn divergence
\[
d_{S,\lambda}\!\left(P_t,P_\vartheta\right)=S_\lambda\!\left(P_t,P_\vartheta\right)-\tfrac{1}{2}S_\lambda\!\left(P_t,P_t\right)-\tfrac{1}{2}S_\lambda\!\left(P_\vartheta,P_\vartheta\right).
\]
We consider the loss function $\ell(t,\vartheta):=d_{S,\lambda}\!\left(P_t,P_\vartheta\right)$.  
If the model is identifiable (i.e., $P_t=P_\vartheta$ implies $t=\vartheta$), then $\ell(t,\vartheta)\ge 0$ with equality if and only if $t=\vartheta$, see \cite{lavenant2024}.  

Let $\mathcal{A}=\{(\sigma_1,\sigma_2,\sigma_3)\in (0,\infty)^3:\sigma_1\sigma_3 > \sigma_2^2\}$, and define the bivariate normal family 
\[
\left\{P_{(\mu,\sigma_1,\sigma_2,\sigma_3)}=\mathcal{N}_2\!\left(\mu,\begin{bmatrix}
    \sigma_1 & \sigma_2\\
    \sigma_2 & \sigma_3 
\end{bmatrix} \right):(\mu,\sigma_1,\sigma_2,\sigma_3)\in \Theta=\mathbb{R}^2\times \mathcal{A} \right\}.
\] 
Let $P_1$ and $P_2$ be two such Gaussians with means $\mu$ and $\nu$ and covariance matrices
\[
\Sigma_1=\begin{bmatrix}
    \sigma_1 & \sigma_2\\
    \sigma_2 & \sigma_3 
\end{bmatrix}\quad\text{and}\quad
\Sigma_2=\begin{bmatrix}
    \tau_1 & \tau_2\\
    \tau_2 & \tau_3 
\end{bmatrix}.
\]
Define
\(
f_\lambda(\Sigma_1,\Sigma_2)= \Sigma_1^{-1/2}\left(\Sigma_1^{1/2}\Sigma_2 \Sigma_1^{1/2}+\left(\tfrac{\lambda}{4}\right)^2\mathcal{I}_2 \right)^{1/2} \Sigma_1^{-1/2}-\tfrac{\lambda}{4}\Sigma_1^{-1}.
\)
It is shown in \cite{delbarrio2020} that $S_\lambda$ admits the closed form
\begin{multline*}
    S_\lambda(P_1,P_2)=\norm{\mu-\nu}^2+\tr\!\left(\Sigma_1\right) +\tr\!\left(\Sigma_2\right) \\ -2\tr\!\left(\Sigma_1f_\lambda\left(\Sigma_1,\Sigma_2\right)\right)-\tfrac{\lambda}{2}\log \!\left((2\pi e)^4\tfrac{\lambda^2}{4}\det\!\left(\Sigma_1f_\lambda\left(\Sigma_1,\Sigma_2\right)\right)\right). 
\end{multline*}
From this, the loss $\ell((\mu,\sigma),(\nu,\tau))$ can be expressed explicitly, and one can see that it is quadratic in $\mu$ and thrice continuously differentiable in $\sigma$, with a unique minimum at $(\mu,\sigma)=(\nu,\tau)$.  
Therefore,  the loss $\ell$ satisfies the assumptions of \Cref{main_thm2}.

\subsection{Stein discrepancies}\label{ss:stein}

An additional example is \emph{Stein's loss} for a variance parameter, considered in \cite{Zhang2022}, with parameter space $\Theta=(0,\infty)$, in form of
\(
\ell(t,\vartheta)=\vartheta/t-\log\!\bigl(\vartheta/t\bigr)-1,\; t,\vartheta>0.
\)
This loss behaves locally like a quadratic function.  
In contrast to the squared error loss, which does not penalize underestimation as strongly since it remains bounded as $t\to 0$, Stein's loss diverges to infinity both as $t\to 0$ and as $t\to\infty$.

In the broader context of Stein's method, \emph{Stein discrepancies} provide a means of expressing integral probability metrics on the space of probability measures on $\mathcal{X}$ as 
\begin{equation}\label{e:stein}
d_{\mathcal{H}}(P,Q)=\sup_{h\in \mathcal{H}} \abs{ \int_{\mathcal{X}} h(x)\,\diff P(x)-\int_{\mathcal{X}} h(x)\,\diff Q(x)},
\end{equation}
where $\mathcal{H}\subset \{h:\mathcal{X}\to \mathbb{R}:h \ \text{measurable}\}$ is called a \emph{Stein class}.  
Loss functions induced by these metrics are natural candidates for satisfying the conditions of \Cref{consistency}.

\subsubsection{Total variation distance}
A special case of an integral probability metric in \eqref{e:stein} is the {total variation distance}, obtained with $\mathcal{H}=\{\mathbb{I}_A:A\in \mathcal{B}(\mathcal{X})\}$.  
In \Cref{loc_q}, we considered loss functions satisfying a local quadratic expansion and showed that condition~\ref{cor1a3} in \Cref{corollary1} holds for a quadratic loss $\ell_0$.  
We now present an example of a loss with a different local behavior, where the corresponding $\ell_0$ is not quadratic.

\begin{example}[$L^1$-differentiable family]\label[example]{L1}
Let $\{P_\vartheta:\vartheta\in \Theta\subseteq \mathbb{R}^d\}$ be a parametric family with densities $p_\vartheta=\diff P_\vartheta/\diff \mu$ with respect to a $\sigma$-finite measure $\mu$ on $\mathcal{X}$.  
Assume that the family is $L^1(\mu)$-differentiable, i.e.\ there exists $\dot p_\vartheta:\mathcal{X}\to \mathbb{R}^d$ such that
\[
\norm{p_{\vartheta+r}-p_\vartheta -\langle r,\dot p_\vartheta\rangle }_{L^1}
=\int_{\mathcal{X}}\abs{p_{\vartheta+r}(x)-p_\vartheta(x) -\langle r,\dot p_\vartheta(x)\rangle}\diff \mu(x)
=o(\norm{r}),
\]
as $r\to 0$ in $\mathbb{R}^d$.   
For the loss $\ell(t,\vartheta)=2\norm{P_t-P_\vartheta}_{\TV}=\norm{p_t-p_\vartheta}_{L^1}$, we then have
\(
\ell(t,\vartheta)=\norm{\langle t-\vartheta,\dot p_\vartheta\rangle}_{L^1}+R(t,\vartheta),
\)
where $R(t,\vartheta)=o(\norm{t-\vartheta})$ as $\norm{t-\vartheta}\to 0$.

Differentiability in quadratic mean, assumed in the BvM theorem (\Cref{B.v.M:}), implies $L^1$-differentiability, and moreover, $\dot p_\vartheta(x)=p_\vartheta(x)\ell'_\vartheta(x)$, where $\ell'_\vartheta$ is the quadratic mean derivative. Assume that $\vartheta \mapsto \dot p_\vartheta(x)$ is continuous at $\vartheta_0$ for all $x\in \mathcal{X}$, that 
\[
\int_{\mathcal{X}} \sup_{\vartheta \in B_\delta(\vartheta_0)}\norm{\dot p_\vartheta(x)}\,\diff \mu(x)<\infty
\]
for some $\delta>0$, and that for any compact $K\subset \mathbb{R}^d$ and $h\in \mathbb{R}^d$,
\[
\sup_{t\in K} \varepsilon^{-1} \abs{R(\varepsilon t +\vartheta_0,\varepsilon h+\vartheta_0)}\to 0
\quad\text{as }\varepsilon \to 0.
\]
A sufficient condition for the last property is that, for some $c>1$ and all $t,\vartheta \in B_\delta(\vartheta_0)$,
\[
R(t,\vartheta)\lesssim \norm{t-\vartheta}^c.
\]
Then condition~\ref{cor1a3} in Proposition~\ref{corollary1} holds with
\(
\ell_0(t,h)=\norm{\langle t-h,\dot p_{\vartheta_0}\rangle}_{L^1} \text{ and } p=1,
\)
{\rev
see Supplement~\ref{additional_details_ex3}.} Thus, if the other assumptions of \Cref{main_thm1} hold, the Bayes estimator under total variation loss satisfies
\[
n^{1/2}(\hat{\theta}_n-\vartheta_0) \stackrel{D}{\longrightarrow} \arg \min_{t \in \mathbb{R}^d}
\int_{\mathbb{R}^d }\norm{\langle t-h,\dot p_{\vartheta_0}\rangle}_{L^1}
\,\diff\mathcal{N}\!\left(Y,I_{\vartheta_0}^{-1}\right)(h),
\]
where $Y\sim\mathcal{N}_d\!\left(0,I_{\vartheta_0}^{-1}\right)$.
\end{example}

We next verify the conditions of \Cref{L1} for the multivariate normal location model, in which the induced loss behaves locally like the Euclidean distance and satisfies condition~\ref{Thm4.1a1} in \Cref{main_thm1} with $p=1$.

\begin{example}[Multivariate normal location model]\label[example]{b:norm}
Consider $\{P_\vartheta=\mathcal{N}_d(\vartheta,\Sigma): \vartheta\in \mathbb{R}^d\}$ with fixed $\Sigma\in \mathbb{R}^{d\times d}$.  
By Pinsker's inequality, for $t,\vartheta\in\mathbb{R}^d$,
\[
\norm{P_t-P_\vartheta}_{\TV}
\le \left(\frac{1}{2}\,\mathrm{KL}(P_t\mid P_\vartheta)\right)^{1/2}
= \left(\frac{1}{4}(t-\vartheta)^\top\Sigma^{-1}(t-\vartheta)\right)^{1/2}
\le  \frac{1}{2}\norm{\Sigma^{-1}}_{\F}^{1/2}\,\norm{t-\vartheta}.
\]
Fix $\vartheta_0 \in \mathbb{R}^d$ and choose $\delta>0$ such that 
\(
\frac{1}{2}\norm{\Sigma^{-1}}_{\F}^{1/2}\, \norm{t-\vartheta}
\le \delta  \norm{\Sigma^{-1}}_{\F}^{1/2} \le  \frac{1}{600},
\)
for all $t,\vartheta\in B_\delta(\vartheta_0)$.
Then, \citet[Theorem~1.8]{arbas2023} implies
\[
\frac{1}{200}\norm{t-\vartheta}
\ \le\ \norm{P_t-P_\vartheta}_{\TV} \ \le\ \frac{1}{2^{1/2}} \norm{t-\vartheta},
\quad t,\vartheta \in B_\delta(\vartheta_0).
\]
For simplicity, take $\Sigma={I}_d$ the identity matrix.  
The $L^1$-derivative of the density $p_\vartheta$ with respect to the Lebesgue measure $\lambda$ is
\(
\dot p_\vartheta(x) = (x-\vartheta)p_\vartheta(x),
\)
which follows from the mean value theorem and the dominated convergence theorem.  
The total variation loss admits the expansion
\(
\ell(t,\vartheta)=\norm{p_t-p_\vartheta}_{L^1}
=\norm{\langle t-\vartheta,\dot p_\vartheta\rangle}_{L^1}+R(t,\vartheta),
\)
where $\vartheta\mapsto \dot p_\vartheta(x)$ is continuous at $\vartheta_0$ for all $x$, and
\[
\int \sup_{\vartheta \in B_\delta(\vartheta_0)}\norm{\dot p_\vartheta(x)}\,\diff x \le \int \sup_{\vartheta \in B_\delta(\vartheta_0)}\norm{x-\vartheta} p_\vartheta(x)\diff x <\infty.
\]
In this case, we have
\begin{align*}
\ell_0(t,h)
&=\norm{\langle t-h,\dot p_{\vartheta_0}\rangle}_{L^1}
= \int \abs{\langle t-h, x-\vartheta_0\rangle} p_{\vartheta_0}(x)\,\diff x \\
&= \mathbb{E}_{Z\sim \mathcal{N}_d(0,\mathcal{I}_d)}\left[\abs{\langle t-h,Z\rangle}\right]
= \left(\frac{2}{\pi}\right)^{1/2}\,\norm{t-h}.
\end{align*}

A direct verification (see Supplement~\ref{ss:prf:s5}), using $L^1$-differentiability and uniform directional differentiability of the Gaussian density, shows that for any compact $K\subset \mathbb{R}^d$ and $h\in \mathbb{R}^d$,
\[
\sup_{t\in K} \abs{\varepsilon^{-1}\ell(\varepsilon t +\vartheta_0,\varepsilon h+\vartheta_0)-\ell_0(t,h)}
\ \to\ 0, \quad \varepsilon \to 0.
\]
Since $t \mapsto \int_{\mathbb{R}^d }(2\pi^{-1})^{1/2}\norm{t-h}\,\diff\mathcal{N}_d(Y,I_{\vartheta_0}^{-1})(h)$ is minimized at $t=Y$, we also obtain asymptotic efficiency for the Bayes estimator under total variation loss, i.e.
\(
n^{1/2}(\hat{\theta}_n-\vartheta_0) \stackrel{D}{\to} \mathcal{N}_d\!\bigl(0,I_{\vartheta_0}^{-1}\bigr).
\)
\end{example}

\subsubsection{1-Wasserstein distance}
For the particular Stein class $\mathcal{H}=\{f:\mathcal{X}\to \mathbb{R}:\abs{f(x)-f(y)}\le \norm{x-y} \text{ for any } x,y \}$ with compact $\mathcal{X}$, the induced integral probability metric in \eqref{e:stein} is the $1$-Wasserstein distance \cite[Theorem~1.14]{villani1}.  
If $\mathcal{X}$ is one-dimensional (not necessarily compact), the distance admits the distribution function representation \cite[Theorem~2.18]{villani1}:
For $P,Q$ with finite first absolute moments and distribution functions $F_P,F_Q$, it holds that 
\(
W_1(P,Q)=\int_{\mathbb{R}}\abs{F_P(x)-F_Q(x)}\,\diff x
= \norm{F_P-F_Q}_{L^1}.
\)

Let $\{P_\vartheta:\vartheta\in \Theta\subseteq \mathbb{R}^d\}$ be a parametric family with finite first absolute moments, and suppose that $F_\vartheta$ is $L^1$-differentiable at $\vartheta_0$ with derivative $\dot F_{\vartheta_0}:\mathcal{X}\to \mathbb{R}^d$, i.e.,
\[
\int_{\mathcal{X}} \abs{F_{\vartheta}(x)-F_{\vartheta_0}(x)-\langle\vartheta-\vartheta_0,\dot F_{\vartheta_0}(x)\rangle}\,\diff x
= o(\norm{\vartheta-\vartheta_0}),\quad \vartheta\to \vartheta_0.
\]
If the family is differentiable in quadratic mean, then under mild conditions $F_\vartheta$ is $L^1$-differentiable with
\(
\dot F_{\vartheta}(t)
= \int_{-\infty}^t \ell'_{\vartheta}(x)\,\diff P_{\vartheta}(x),
\)
where $\ell'_{\vartheta}$ is the quadratic mean derivative (\Cref{L1_diff_cdf} in Supplement~\ref{ss:lp:diff}).

For the loss $\ell(t,\vartheta)=W_1(P_t,P_\vartheta)$, its $L^1$-differentiability yields
\[
\ell(t,\vartheta)
= \|{\langle t-\vartheta,\dot F_{\vartheta}\rangle}\|_{L^1} + R(t,\vartheta),
\quad \abs{R(t,\vartheta)}=o(\norm{t-\vartheta}),
\]
and, as in \Cref{L1}, $\ell_0(t,h)=\|{\langle t-h,\dot F_{\vartheta_0}\rangle}\|_{L^1}$.
{The probability measure associated with the Bayes estimator, i.e.\ $P_{\hat{\theta}_n}$ is a population Wasserstein median over the space $\{P_\vartheta:\vartheta \in \Theta\}$ (see e.g.\ \citealp{CCE24}).}

\begin{example}[Gompertz model]\label{Ws1_ex}
Consider the Gompertz distribution $P_{\vartheta}$ with shape parameter $\vartheta>0$, which has a Lebesgue density
\(
p_\vartheta(x)=\vartheta\exp\!\left(\vartheta+x-\vartheta e^x\right),\; x>0.
\)
This distribution is often used in actuarial science to describe mortality processes of organisms \citep{gompertz}. The $1$-Wasserstein distance between Gompertz distributions of shape parameters $t,\vartheta>0$ is
\begin{align*}
\ell(t,\vartheta)
&= W_1(P_t,P_{\vartheta}) = \int_0^\infty \abs{e^{-t(e^x-1)}-e^{-\vartheta(e^x-1)}}\,\diff x \\
&= \mathbb{I}(t>\vartheta)\left(E_1(\vartheta)e^\vartheta- E_1(t)e^t\right)
    -\mathbb{I}(t\le\vartheta)\left(E_1(\vartheta)e^\vartheta- E_1(t)e^t\right),
\end{align*}
where 
\(
E_1(s)=\int_s^\infty u^{-1}{e^{-u}}\,\diff u.
\)
The function $t\mapsto\ell(t,\vartheta)$ is differentiable for $t\neq\vartheta$ but not at $t=\vartheta$.  
Since the density is continuously differentiable and the Fisher information is continuous, the model is differentiable in quadratic mean \cite[Lemma~7.6]{v}, and hence $F_\vartheta$ is $L^1$-differentiable (\Cref{L1_diff_cdf}  in Supplement~\ref{ss:lp:diff}). Thus,
\(
\ell(t,\vartheta)
= \abs{t-\vartheta} \int_0^\infty \abs{(e^x-1)e^{-\vartheta(e^x-1)}}\,\diff x
    + R(t,\vartheta), \text{ with } \abs{R(t,\vartheta)}=o(\abs{t-\vartheta}).
\)
\Cref{figw1} presents a contour plot of $(a,b)\mapsto W_1(P_a,P_b)$ and, for comparison, that of $(a,b)\mapsto \abs{a-b}$. As shown, the loss is not a function of $t-\vartheta$, and therefore falls outside standard quadratic approximations in the literature, but it is covered by our general framework.
\begin{figure}
    \centering
\includegraphics[width=0.4\textwidth, trim=10mm 13mm 9mm 18mm, clip]{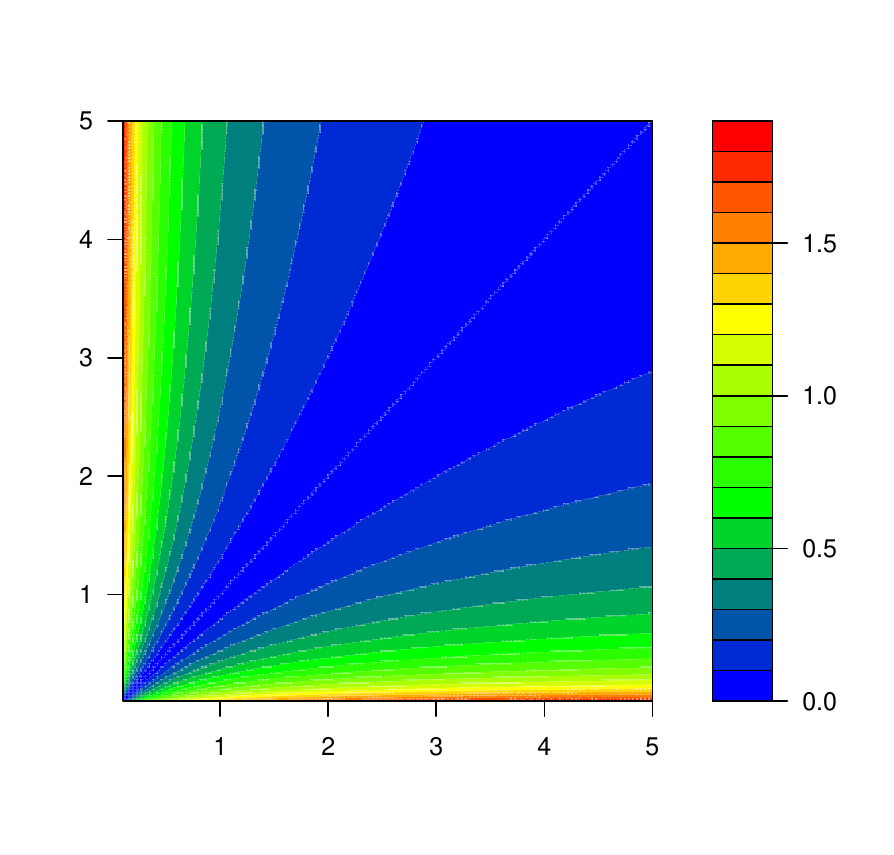}\qquad
\includegraphics[width=0.4\textwidth, trim=10mm 13mm 9mm 18mm, clip]{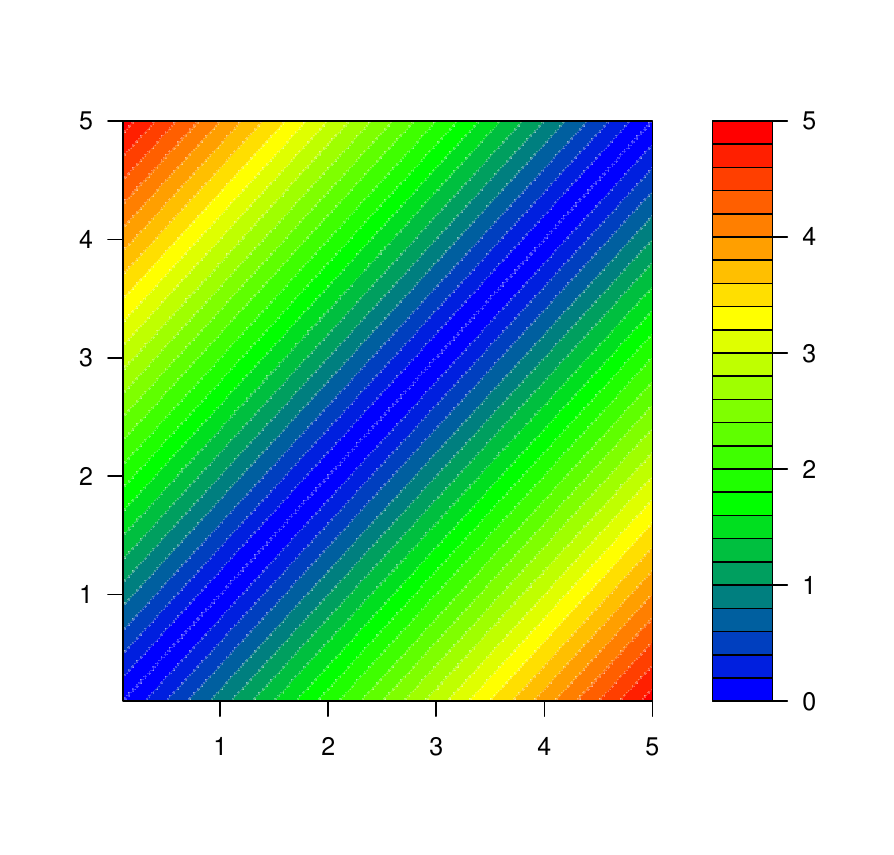}\\
    \caption{Contour plots of maps $(a,b)\mapsto W_1(P_a,P_b)$ (left) and $(a,b)\mapsto \abs{a-b}$ (right).    \label{figw1}}
\end{figure}

\end{example}

\subsection{Beyond our framework}\label{ss:beyond}
We now present an example that lies outside our framework.  
Let $\mathcal{X}=\{y_1,\dots,y_d\}$, where $y_j$ denotes the $j$th unit vector in $\mathbb{R}^d$, and consider the parametric family of multinomial distributions
\(
\{P_p = \mathrm{Mult}(1,p): p \in [0,1]^d, \ \|p\|_1 = 1\}
\)
on $\mathcal{X}$. We study an empirical Bayes model given by
\begin{align*}
    X_1,\dots,X_n \mid \rho=p \;\stackrel{\text{i.i.d.}}{\sim}\; P_p,\qquad
    \rho \;\sim\; \Pi_n,
\end{align*}
where the data-dependent prior satisfies
\[
\Pi_n(\{p\}) \;=\; \frac{1}{\sum_{i=1}^n w_i}\sum_{i=1}^n w_i \, \delta_{X_i}(p), 
\quad w_i = w_i(X_1,\ldots,X_n) = \prod_{j=1}^d X_{i,j}^{-\sum_{m=1}^n \mathbb{I}(X_m=y_j)}.
\]
It follows that the posterior is simply the empirical measure of the data. Moreover, the Bayes estimator under the squared 2-Wasserstein loss $\ell(t,\vartheta) = W_2^2(P_t,P_\vartheta)$ is
\[
\mathop{\arg \min}_{t \in [0,1]^d,\ \|t\|_1 = 1} \ \frac{1}{n}\sum_{i=1}^n W_2^2(t, X_i).
\]
That is, the Bayes estimator coincides with the empirical Wasserstein barycenter of $X_1,\dots,X_n$, where each $X_i$ is interpreted as a probability mass vector, if we restrict the support of all involved probability measures on $\mathcal{X}$.

{Here}, we cannot employ our framework, since the empirical prior $\Pi_n$ is not absolutely continuous with respect to the Lebesgue measure (so that the BvM theorem may fail). Nevertheless, this barycenter can be formulated as the solution of a finite-dimensional linear program, whose standard form (see Supplement~\ref{sss:beyond}) is
\[
    \min_{x \geq 0} c^\top x, 
    \quad \text{subject to } Ax = b,
\]
where $b = \mathbf{1}_{d}^\top \in \mathbb{R}^d$,  
\(
c = \bigl(1-N_1,\ \ldots,\ 1-N_{d-1},\ 0,\ldots,0\bigr)^\top \in \mathbb{R}^{2(d-1)+1},
\)
and
\[
A = \begin{pmatrix}
    \mathcal{I}_{d-1} & \mathcal{I}_{d-1} & \mathbf{0}_{d-1} \\
    \mathbf{1}_{d-1}^\top & \mathbf{0}_{d-1}^\top & 1
\end{pmatrix} \in \mathbb{R}^{d \times (2(d-1)+1)},
\]
with $N_j = n^{-1}\sum_{i=1}^n \mathbb{I}(X_i=y_j)$, $\mathbf{0}_d$ denoting the zero vector in $\mathbb{R}^d$, $\mathbf{1}_d$ the all-ones vector in $\mathbb{R}^d$, and $\mathcal{I}_d$ the $d \times d$ identity matrix.

The barycenter is then expressed as $t = (x_1,\ldots,x_{d-1},\, 1 - x_1 - \cdots - x_{d-1})$. Its limiting distribution (which is non-Gaussian, in general) may be studied via arguments analogous to those in \citet{KMZ22}, who {provided limit laws for linear programs with} random $b$ in contrast to the random $c$ encountered here. A detailed investigation of this problem is an intriguing direction for future research, but lies beyond the scope of this paper.

\section{Numerical experiments}\label{sec6}

In this section, we illustrate our theoretical convergence results through numerical simulations.  
We focus on the three intrinsic losses $\ell_{H},\ell_{\mathrm{KL}},\ell_{W_2}$ discussed in \Cref{ss:quad:loss}.

\subsection{Exponential--Gamma model} 
Consider the family of exponential distributions $P_t=\mathrm{Exp}(t)$ for $t>0$ with prior $\Pi=\mathrm{Gamma}(a,b)$.  
Then the posterior is 
\(
\theta \mid {\boldsymbol X}=(x_1,\ldots,x_n)  \sim \mathrm{Gamma}\!\left(a+n,\ b+\sum_{i=1}^n x_i\right).
\)
The three losses take the form
\[
\ell_{H}(t,\vartheta)=1-\frac{2(t\vartheta)^{1/2}}{t+\vartheta},\quad
\ell_{\mathrm{KL}}(t,\vartheta)=\log(\vartheta)-\log(t)+\frac{t}{\vartheta}-1, \quad
\ell_{W_2}(t,\vartheta)=2(t^{-1}-\vartheta^{-1})^2.
\]
In the simulation, we set $a=b=2$ and $\vartheta_0=2$.  \Cref{fig1} displays QQ-plots of the rescaled and centered estimators $(I_{\vartheta_0}n)^{1/2}(\hat{\theta}_n-\vartheta_{0})$ against the standard normal distribution.  As $n$ increases, the sample quantiles approach the theoretical normal quantiles, in agreement with \Cref{main_thm2}.

%\old
\iffalse
\begin{figure}[ht]
    \centering
    \includegraphics[width=.9\linewidth, trim={0 0.2cm 0.5cm 1cm}, clip]{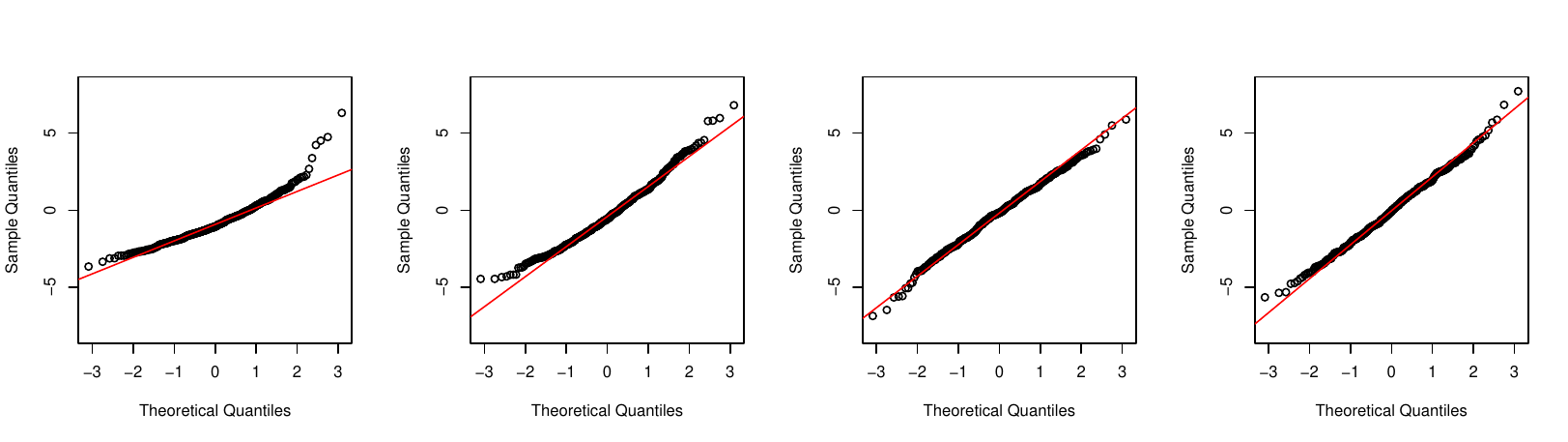}\\
    \includegraphics[width=.9\linewidth, trim={0 0.2cm 0.5cm 1cm}, clip]{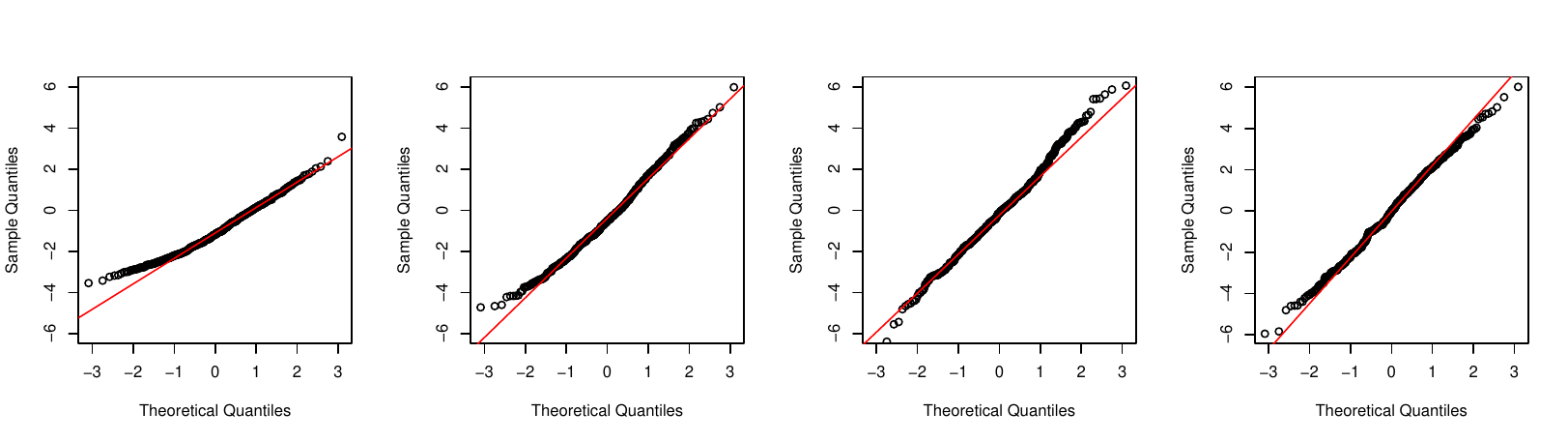}\\
    \includegraphics[width=.9\linewidth, trim={0 0.2cm 0.5cm 1cm}, clip]{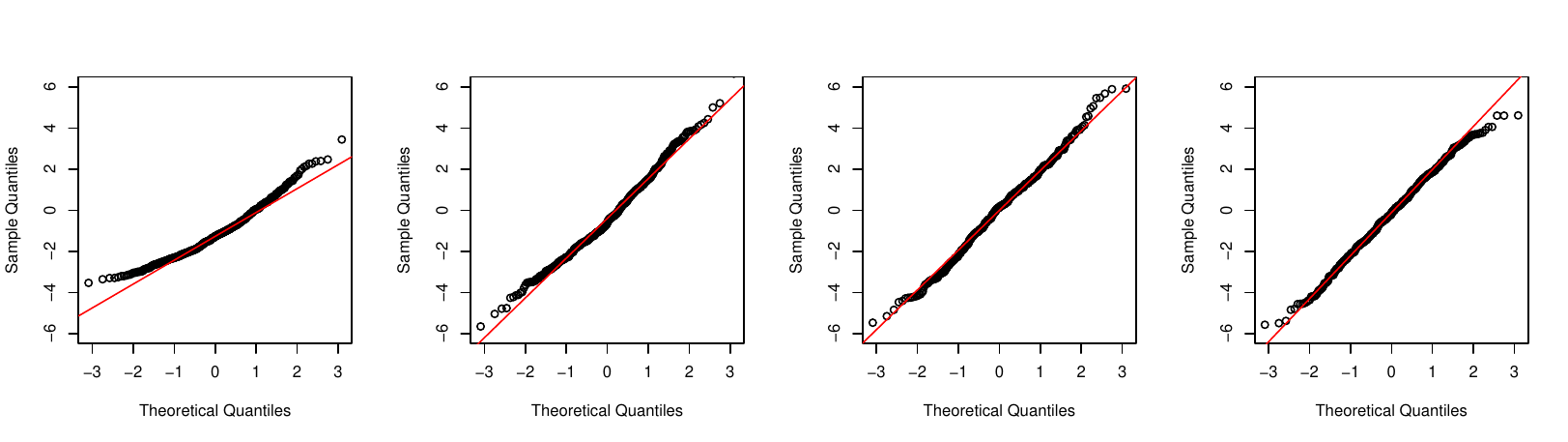}
    \caption{QQ-plots for $n^{1/2}(\hat{\theta}_n-\vartheta_{0})$ for $n\in\{10,10^2,10^4,10^6\}$ (left to right) based on $M=500$ Monte Carlo repetitions, where $\vartheta_{0}=2$ and  $\hat{\theta}_n$ is the Bayes estimator under losses $\ell_{H}$,  $\ell_{W_2}$ and   $\ell_{\mathrm{KL}}$ (top to bottom). The reference  corresponds to the standard normal distribution.}
    \label{fig1}
\end{figure}
\fi

%new
\begin{figure}[t]
    \centering
    \includegraphics[width=.95\linewidth]{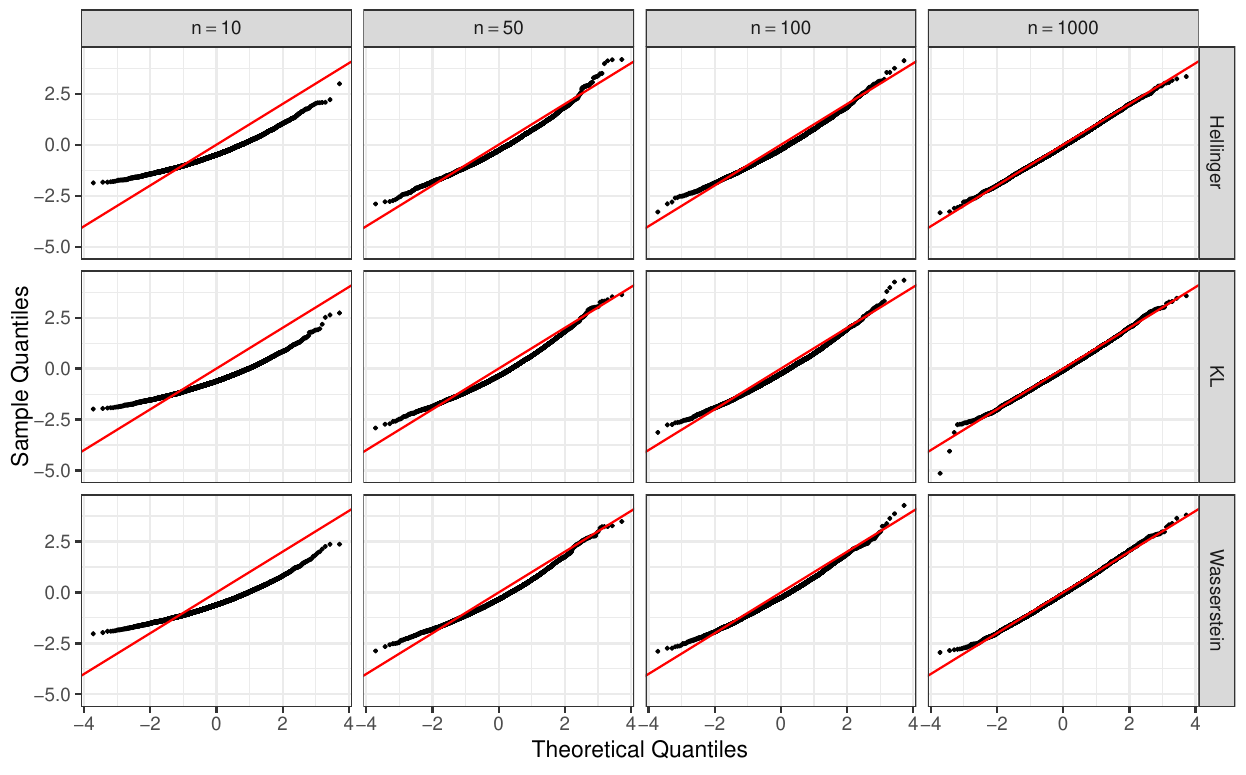}
    \caption{QQ-plots for  $(nI_{\vartheta_0})^{1/2}(\hat{\theta}_n-\vartheta_{0})$ {\rev for $n\in\{10,50,100,1000\}$ (left to right), based on $M=5000$} Monte Carlo repetitions under the Exponential--Gamma model. Here  $\hat{\theta}_n$ is the Bayes estimator under the Hellinger loss $\ell_{H}$, the  Kullback--Leibler (KL) loss $\ell_{\mathrm{KL}}$ and the 2-Wasserstein loss $\ell_{W_2}$ (top to bottom). The reference distribution (shown via theoretical quantiles) is the standard normal distribution. \label{fig1}}
\end{figure}

\subsection{Multinomial--Dirichlet model}\label{Mult_example}
Consider the multinomial model $\{P_p=\mathrm{Mult}(1,p):p\in[0,1]^d,\ \norm{p}_1=1\}$ on $\mathcal{X}=\{y_1,\dots,y_d\}$ with $y_j$ the $j$th unit vector in $\mathbb{R}^d$, as in \cref{ss:beyond}. Now we employ a Dirichlet prior $\mathrm{Dir}(\alpha)$, and then the posterior is $\mathrm{Dir}(\alpha+\sum_{i=1}^n x_i)$, given ${\boldsymbol X}=(x_1,\ldots,x_n)$.  

The squared 2-Wasserstein distance between two multinomial measures $P_p$ and $P_\vartheta$ can be expressed in terms of their coordinates (Lemma~\ref{lem:W2-mult} in Supplement~\ref{ss:prf:s6}).  
In particular,
\(
W_2^2(P_p,P_\vartheta) \ =\ 1 - \sum_{i=1}^d \min(p_i,\vartheta_i).
\)
Since $\{p\in[0,1]^d:\norm{p}_1=1\}$ has empty interior in $\mathbb{R}^d$, we reparametrize as
\(
\Theta = \left\{(p_1,\dots,p_{d-1})\in [0,1]^{d-1}:\sum_{i=1}^{d-1}p_i\le 1\right\},
\)
with \(\tilde{P}_a =  P_{(a,\,1-\norm{a}_1)}\) for $a \in \Theta$. The loss becomes
\[
\ell(a,b) = W_2^2(\tilde{P}_a,\tilde{P}_b)= \sum_{i=1}^{d-1}\bigl(a_i+b_i-\min\{a_i,b_i\}\bigr)
= \norm{a-b}_1,\; a,b\in\Theta,
\]
i.e., the 2-Wasserstein distance coincides with the total variation distance. The inverse of the Fisher information matrix at $\vartheta_0=(p_1,\dots,p_{d-1})\in \Theta^\circ$ is given by
 \[
 I_{\vartheta_0}^{-1}=\begin{pmatrix}
     p_1(1-p_1)&-p_1 p_2&\dots &-p_1p_{d-1}\\
     -p_2 p_1 & p_2(1-p_2)&\dots&-p_2 p_{d-1}\\
         \vdots&\vdots &\vdots& \vdots \\
     -p_{d-1}p_1& -p_{d-1}p_2&\dots &p_{d-1}(1-p_{d-1})
 \end{pmatrix}\; \in\; \mathbb{R}^{(d-1)\times(d-1)}.
 \]
The limiting process is then
\(
Z(t) = \int_{\mathbb{R}^{d-1}}\norm{t-h}_1\,\diff\mathcal{N}_{d-1}\!\left(Y,I^{-1}_{\vartheta_0}\right)(h),
\)
which is minimized at $t=Y$ {and} the Bayes estimator is the vector of component wise marginal posterior medians.  
If ${\boldsymbol X}=x$, the $k$-th marginal of the posterior $\mathrm{Dir}(\alpha+\sum_{i=1}^n x_i)$ is $\mathrm{Beta}(\tilde{\alpha}_k,\sum_{j\neq k}\tilde{\alpha}_j)$, with $\tilde{\alpha}_k=\alpha_k+\sum_{i=1}^n x_{i,k}$, so the $k$-th component of $\hat{\theta}_n$ is the median of this Beta distribution.

In the simulation, we set $d=3$, $\alpha=(1,1,1)$, and $\vartheta_0=(1/3,1/3)$.  
Figure~\ref{fig:mult} shows the 2-Wasserstein distance between the empirical distribution of $n^{1/2}(\hat{\theta}_n-\vartheta_0)$ and $\mathcal{N}_{d-1}(0,I_{\vartheta_0}^{-1})$ as a function of $n$. {\rev For each $n$,  the 2-Wasserstein distance between the empirical distribution of the scaled Bayes estimator and the limiting normal distribution was computed using the function \texttt{wasserstein} in the \texttt{R} package \texttt{transport} \cite{Schuhmacher2024}. The computation was based on $10^4$ drawn from each distribution and averaged over 
$M=100$ Monte Carlo repetitions, following the approach of \citet{SSZM19}.} One can see that the distance rapidly converges to zero. Since the 2-Wasserstein distance metrizes weak converge, this implies that the Bayes estimator, after proper scaling, converges in distribution to the normal limit $\mathcal{N}_{d-1}\!\left(0,I^{-1}_{\vartheta_0}\right)$.

%old
\iffalse
\begin{figure}[!h]
    \centering
    \includegraphics[width=0.5\linewidth,trim={0.1cm 0.5cm 0.8cm 1.8cm}, clip]{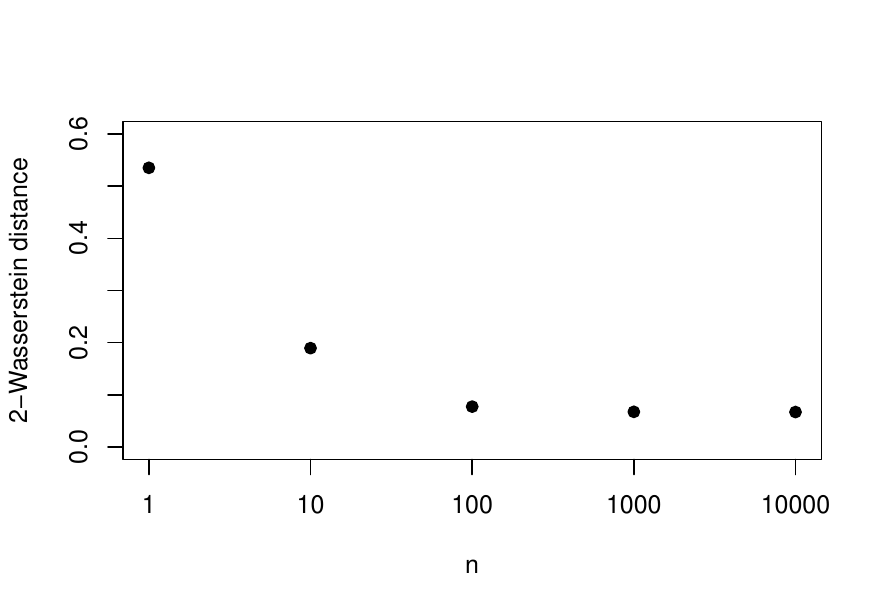}
    \caption{2-Wasserstein distance between the empirical distribution of $n^{1/2}(\hat{\theta}_n-\vartheta_0)$ and $\mathcal{N}_{d-1}(0,I_{\vartheta_0}^{-1})$ for $d=3$, $\alpha=(1,1,1)$ and $\vartheta_0=(1/3,1/3)$. Each point is based on $R=100$ Monte Carlo repetitions of calculating the Wasserstein distance based on $M=2000$ samples.}
    \label{fig:mult}
\end{figure}
\fi 

\begin{figure}
    \centering
    \includegraphics[width=0.75\linewidth]{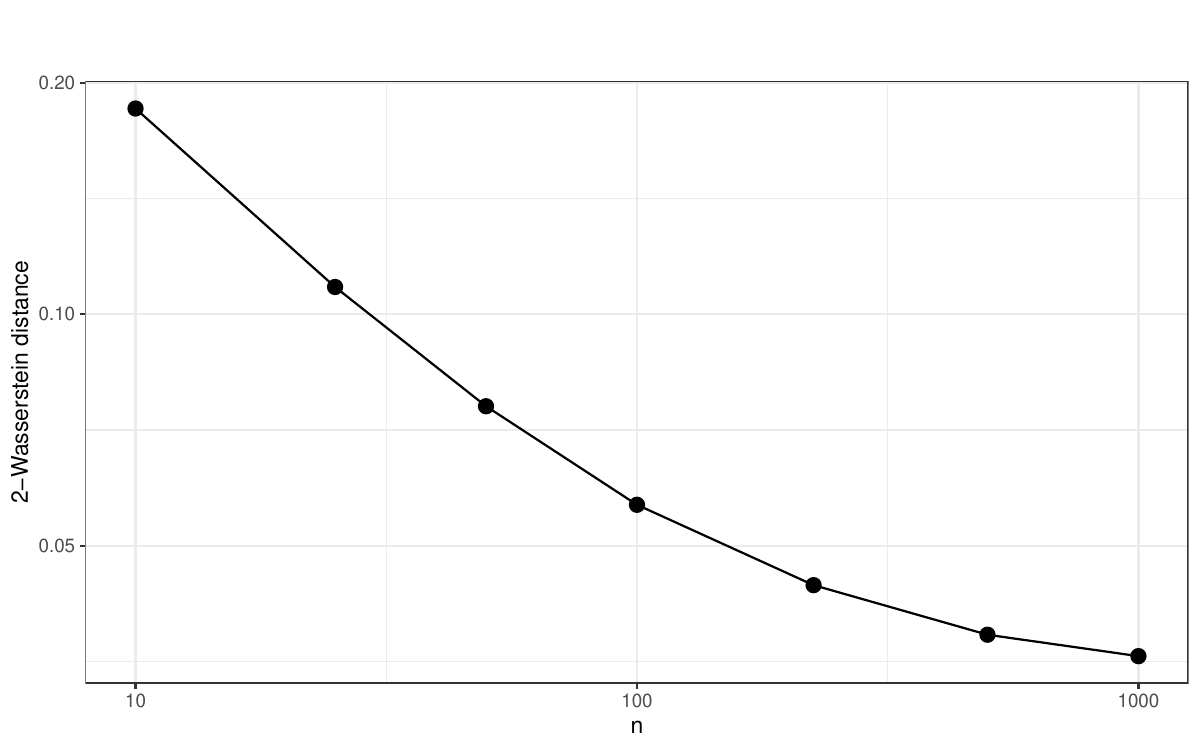}
    \caption{ {\rev 2-Wasserstein distance between the empirical distribution of $n^{1/2}(\hat{\theta}_n-\vartheta_0)$ and $\mathcal{N}_{d-1}(0,I_{\vartheta_0}^{-1})$ for $n \in \{10,25,50,100,225,500,1000\}$. Here $\hat{\theta}_n$ is the Bayes estimator, $d=3$, $\alpha=(1,1,1)$ and $\vartheta_0=(1/3,1/3)$. Each point represents the average over $M=100$ Monte Carlo repetitions, where the Wasserstein distance is computed from $10^4$ samples from each distribution. Both axes are shown on a logarithmic scale.} \label{fig:mult}}
\end{figure}

{\rev

\section{Discussion}\label[section]{s:extension}

In this paper, we study the weak convergence of the rescaled Bayes estimator
\(
\delta_n^{-1}(\hat\theta_n-\vartheta_0),
\)
with $\hat\theta_n$ defined through a general loss function $\ell(\cdot, \cdot)$, under the frequentist setting where observations
\(
X_1,\dots,X_n \stackrel{\text{i.i.d.}}{\sim} P_{\vartheta_0},
\)
for a fixed parameter $\vartheta_0 \in \Theta$, a subset of a metric vector space. The scaling $\delta_n$ is determined by the posterior contraction rate; in the regular parametric models considered here, $\delta_n=1/\sqrt{n}$. The proof strategy, rooted in the classical works of \citet{Lecam1953,chao,Ibragimov1973}, consists of two main steps:  
(a) establishing weak convergence of the posterior risk as a stochastic process indexed by the local parameter $t = \delta_n^{-1}(\vartheta-\vartheta_0)$, with $\vartheta$ ranging over a neighborhood of $\vartheta_0$; and  
(b) applying the argmax continuous mapping theorem (see e.g.~\cite[Theorem~5.56]{v} and \cite[Theorem~3.2.2]{vanwell}).

This framework is potentially applicable beyond the setting studied here and could provide a route to analyzing Bayes estimators in more general models. In particular, step~(a), namely the convergence of the posterior risk, depends crucially on an approximation to the posterior distribution, such as BvM-type results, or Laplace approximations. Such approximations have been established in a variety of settings, including non-regular models \citep{BoGr14,Ghosal1997}, misspecified models \citep{klein2012}, semiparametric models \citep{BiKl12,Cas12,Castillo2015}, nonparametric models \citep{CaNi14,ghosal_van_der_vaart_2017}, and high-dimensional models \citep{Kat25,Spo25}, see also \citep{Durante2024} for non-Gaussian approximations. To derive the convergence of posterior risk in step~(a) from the posterior approximations, and to establish uniform tightness (or an appropriate alternative) of $\delta_n^{-1}(\hat\theta_n-\vartheta_0)$, one additionally needs a strengthened form of posterior contraction, as in \Cref{weak_conv_prop1} (in Supplement~\ref{technical}). Thus, a systematic treatment of these extensions would require substantial additional work and lies beyond the scope of the present paper. We nevertheless briefly discuss how our analysis may extend to misspecified models.

Following \citet{klein2012}, suppose now that
\(
X_1,\dots,X_n \stackrel{\text{i.i.d.}}{\sim} P_0,
\)
where
\(
P_0 \notin \{P_\vartheta:\vartheta\in\Theta\subseteq\mathbb{R}^d\}.
\)
Define the pseudo-true parameter
\[
\vartheta_*=\arg\min_{\vartheta\in\Theta} \mathrm{KL}(P_0\mid P_\vartheta),
\]
where $\mathrm{KL}$ denotes the Kullback--Leibler divergence, and assume that this minimizer is unique. We further assume that the matrix
\[
V_{\vartheta_*}
=
\nabla_\vartheta^2 \mathrm{KL}(P_0\mid P_\vartheta)\big|_{\vartheta=\vartheta_*}
\]
is nonsingular. Under these conditions, Theorem~2.1 of \citet{klein2012}  yields a BvM-type result: the posterior distribution of $\sqrt{n}(\hat\theta_n-\vartheta_*)$ converges in total variation distance to a normal distribution with covariance matrix $V_{\vartheta_*}^{-1}$.

Moreover, we consider the loss function that is bounded by a polynomial, a stricter requirement than the exponential growth in \Cref{d:eg}. This may be viewed as a price of model misspecification. For such losses, we can extend \Cref{weak_conv_prop1} to this misspecified setup. As a consequence, we may establish, much as in \Cref{corollary1}, the convergence of the posterior risk in step~(a) to a nondegenerate limit process $Z(t)$ in $\ell^\infty(K)$ for every compact set $K\subseteq\mathbb{R}^d$, as well as the uniform tightness of $\sqrt{n}(\hat\theta_n-\vartheta_*)$, which is a key ingredient for step~(b). The limit process of posterior risks would take the form
\[
Z(t)=\int_{\mathbb{R}^d}\ell_0(t,h)\,\diff \mathcal{N}_d(Y,V^{-1}_{\vartheta_*})(h),
\]
where $\ell_0$ is a directional Hadamard derivative of the loss $\ell$, 
\(
Y\sim \mathcal{N}_d(0,J_{\vartheta_*})
\)
with
\[
J_{\vartheta_*}
=
V_{\vartheta_*}^{-1}
\left(
\int_{\mathcal{X}}
\ell_{\vartheta_*}'(x){\ell_{\vartheta_*}'(x)}^\intercal
\,\diff P_0(x)
\right)
V_{\vartheta_*}^{-1},
\]
and recall that $\ell_{\vartheta_*}'$ denotes the score function. This suggests that an extension of \Cref{main_thm1} to misspecified models should be feasible.  We leave a careful investigation of this extension, as well as of the other directions mentioned above, to future work.
}

\section*{Acknowledgement}
The authors would like to thank Johannes Schmidt-Hieber for helpful discussions. The authors gratefully acknowledge support from the Deutsche Forschungsgemeinschaft (DFG, German Research Foundation) Collaborative Research Center 1456. HL and AM further acknowledge funding from the DFG under Germany’s Excellence Strategy--EXC 2067/1-390729940. AM additionally acknowledges support from the DFG Research Unit 5381.

%\bibliographystyle{apalike}
%\bibliography{literature2}

\clearpage

\appendix

\section{Various types of differentiability}
In this supplement, we give an overview over different concepts of differentiability considered in this paper, and put them into perspective.

\subsection{Hadamard derivative}\label{ss:had}
Let $\phi: D\to E$ be a function between two Banach spaces $D$
and $E$. We say $\phi$ is $p$-th order \emph{Hadamard directionally differentiable} at $\vartheta_0\in D^\circ$ in direction $ h\in D$, if there exists a  mapping $ \phi^\prime_{\vartheta_0}:D \to E$ such that
\[
\frac{\phi(\vartheta_0+r_n h_n)-\phi(\vartheta_0)}{r_n^p}\to \phi^\prime_{\vartheta_0}(h) 
\]
for any sequences $r_n\to 0,r_n>0$ and $h_n\to h$ and for $p>0$. Here the derivative $\phi^\prime_{\vartheta_0}(\cdot)$ does not have to be linear, but is by definition continuous at $h$. If $p=1$ and $\phi^\prime_{\vartheta_0}(\cdot)$ is linear we say that $\phi$ is \emph{Hadamard differentiable} at $\vartheta_0\in D^\circ$ in direction $ h\in D$.  In \citet[Theorem~3.2] {Averbukh1968} it is shown that for any compact set $K\subset D$, the Hadamard differentiability at $\vartheta_0\in D^\circ$ in direction $ h$ for all $h \in K$ is equivalent to the \emph{compactly differentiability} at $\vartheta_0$, i.e.\ there exists a continuous, linear mapping $ \phi^\prime_{\vartheta_0}:D \to E$
with
\[
\sup_{h\in K}\norm{\frac{\phi(\vartheta_0+r h)-\phi(\vartheta_0)}{r}-\phi^\prime_{\vartheta_0}(h)  }_E \to 0,\quad \text{whenever }r\to 0.
\] Similarly, we say $\phi$ is $p$-th order \emph{compactly directionally differentiable} at $\vartheta_0$, if  there exists a mapping $ \phi^\prime_{\vartheta_0}:D \to E$ such that
\[ \sup_{h\in K}\norm{\frac{\phi(\vartheta_0+r h)-\phi(\vartheta_0)}{r^p}-\phi^\prime_{\vartheta_0}(h)  }_E \to 0,\quad \text{whenever }r\to 0,r>0.\]
Here derivative is not required to be linear. It can be shown that also the directional version of compact differentiability is equivalent to directionally Hadamard differentiability, if the derivative is assumed to be continuous.
\begin{proposition}[{\citealp[Proposition 3.3]{Shapiro1990}}]  Let $p>0$. The $p$-th order Hadamard directional differentiability of $\phi$ at $\vartheta_0$ in direction $h$ for all $h\in D$ implies $p$-th order compact directional differentiability of $\phi$ at $\vartheta_0$. Conversely, if $\phi$ is $p$-th order compactly directionally differentiability with derivative $\phi^\prime_{\vartheta_0}$, and if $\phi^\prime_{\vartheta_0}$ is sequentially continuous, then $\phi$ is
$p$-th order Hadamard directionally differentiable at $\vartheta_0$.
\end{proposition}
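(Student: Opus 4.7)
The statement is a two-direction equivalence, and each direction can be handled by a short argument that exploits the compactness of $K$ in a standard way; the work is essentially in choosing the right subsequences.

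\textbf{Hadamard directional differentiability $\Rightarrow$ compact directional differentiability.} I would argue by contradiction. Suppose Hadamard directional differentiability holds at $\vartheta_0$ in every direction, but the uniform convergence
\[
\sup_{h\in K}\norm{\frac{\phi(\vartheta_0+r h)-\phi(\vartheta_0)}{r^p}-\phi^\prime_{\vartheta_0}(h)}_E \to 0, \qquad r\to 0^+,
\]
fails on some compact $K\subset D$. Then there exist $\varepsilon>0$, $r_n\to 0^+$ and $h_n\in K$ with
\(
\|r_n^{-p}[\phi(\vartheta_0+r_n h_n)-\phi(\vartheta_0)]-\phi^\prime_{\vartheta_0}(h_n)\|_E\ge \varepsilon.
\)
By compactness of $K$, pass to a subsequence so that $h_n\to h^*\in K$. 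Applying the Hadamard definition in direction $h^*$ with the sequences $r_n$ and $h_n$, the difference quotient converges to $\phi^\prime_{\vartheta_0}(h^*)$. Since the Hadamard derivative is by definition continuous at $h^*$ (as noted in the preceding paragraph of the excerpt), $\phi^\prime_{\vartheta_0}(h_n)\to \phi^\prime_{\vartheta_0}(h^*)$ as well. Combining these via the triangle inequality contradicts the lower bound $\varepsilon$.

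\textbf{Compact directional differentiability with sequentially continuous derivative $\Rightarrow$ Hadamard directional differentiability.} Fix $h\in D$ and arbitrary sequences $r_n\to 0^+$, $h_n\to h$. The set $K:=\{h_n:n\ge 1\}\cup \{h\}$ is compact in $D$. By the assumed uniform convergence on $K$,
\[
\sup_{k\in K}\norm{\frac{\phi(\vartheta_0+r_n k)-\phi(\vartheta_0)}{r_n^p}-\phi^\prime_{\vartheta_0}(k)}_E \to 0,
\]
and in particular, evaluating at $k=h_n$, the quantity $r_n^{-p}[\phi(\vartheta_0+r_n h_n)-\phi(\vartheta_0)]-\phi^\prime_{\vartheta_0}(h_n)$ tends to $0$ in $E$. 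Sequential continuity of $\phi^\prime_{\vartheta_0}$ then gives $\phi^\prime_{\vartheta_0}(h_n)\to \phi^\prime_{\vartheta_0}(h)$, and a final triangle-inequality step yields
\(
r_n^{-p}[\phi(\vartheta_0+r_n h_n)-\phi(\vartheta_0)]\to \phi^\prime_{\vartheta_0}(h),
\)
which is the Hadamard condition.

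\textbf{Main obstacle.} The argument is essentially a diagonal/subsequence manipulation, so there is no genuine hard step. The one point requiring care is making sure, in the first direction, that the continuity of the Hadamard derivative used to conclude $\phi^\prime_{\vartheta_0}(h_n)\to \phi^\prime_{\vartheta_0}(h^*)$ is indeed available; this is precisely the built-in continuity at each direction noted in the definition used in the excerpt. The second direction is almost immediate once one observes that the convergent sequence together with its limit is compact, which is the standard trick for upgrading pointwise to sequential statements under uniform convergence on compacts.
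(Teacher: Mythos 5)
Your argument is correct, and both directions are the right way to prove this. Note, however, that the paper does not actually supply a proof of this proposition: it is stated with a citation to \citet[Proposition~3.3]{Shapiro1990}, together with the remark that Shapiro proves only the case $p=1$ and ``the general case follows analogous.'' So there is no in-paper proof to compare against; your proposal effectively fills in the argument that the paper delegates to the reference. Your forward direction (contradiction + compactness + the built-in continuity of the Hadamard directional derivative, which itself follows from a diagonalization across directions and is what the paper alludes to in the sentence ``\ldots but is by definition continuous at $h$'') and your reverse direction (take $K=\{h_n\}\cup\{h\}$, which is compact as a convergent sequence with its limit, and apply uniform convergence plus sequential continuity of the derivative) are exactly the standard argument, and they transfer to arbitrary $p>0$ unchanged since $p$ only enters through the normalization $r^p$. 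One cosmetic point: when negating uniform convergence on $K$, the supremum need not be attained, so you should extract $h_n$ with the norm at least $\varepsilon/2$ rather than $\varepsilon$; this does not affect the contradiction.
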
 The author only proves the case $p=1$, but the general case follows analogous. 

Another concept, which is stronger than Hadamard differentiability is Fréchet differentiability. We say $\phi$ is \emph{Fréchet differentiable} at $\vartheta_0\in D^\circ$, if if there exists a continuous and linear  mapping $ \phi^\prime_{\vartheta_0}:D \to E$ such that
\[\frac{\norm{\phi(\vartheta_0+ h)-\phi(\vartheta_0)-\phi^\prime_{\vartheta_0}(h)}_E}{\norm{h}_D }\to 0,\quad \text{whenever }\norm{h}_D\to 0.\]

\emph{Connection to condition \ref{cor1a3} in \Cref{corollary1}.} %\label[appendix]{connection}
In our setting we consider $D=\Theta\times \Theta$ for some Banach space $\Theta$ and are only interested in differentiability at the point $a_0=(\vartheta_0,\vartheta_0)$ for $\vartheta_0\in \Theta^\circ$. Consider a loss function $\ell:\Theta\times \Theta \to [0,\infty)$ and set $\phi(a)=\ell(a_1,a_2)$ for $a=(a_1,a_2)\in \Theta\times \Theta$. Condition \ref{cor1a3} from \Cref{corollary1} requires that for $\Theta\subseteq \mathbb{R}^d$ that there exists $ \phi^\prime_{\vartheta_0}:D \to [0,\infty)$ such that for all $h\in \mathbb{R}^d$ and compact set $K\subseteq \mathbb{R}^d$
\begin{align*}
    &\sup_{a=(t,h)\in K \times\{h\} }\abs{\frac{\phi(a_0+r a)-\phi(a_0)}{r^p}-\phi^\prime_{\vartheta_0}(a)  }
    &=\sup_{t\in K}\abs{\frac{\ell\left( \vartheta_{0}+rt,\vartheta_{0}+r h\right)}{r^{p}}-\ell_0(t,h)}
    \to 0,
\end{align*}as $r\to 0,r>0$.
We see that if $(a_1,a_2)\mapsto \ell(a_1,a_2)$ is $p$-th order Hadamard directionally differentiable at $(\vartheta_0,\vartheta_0)$, the loss function $\ell_0(t,h)$ satisfying condition \ref{cor1a3} from \Cref{corollary1}  is given by the $p$-th order Hadamard derivative at $(\vartheta_0,\vartheta_0)$ in direction $(t,h)$. As a consequence we also obtain that $(t,h)\mapsto\ell_0(t,h)$ is continuous in this case.  As we only require the first component of $a$ to range over a compact set and the second component is considered fixed, this condition is weaker than $p$-th order Hadamard directional differentiability and moreover, is only required at a point $a_0=(\vartheta_0,\vartheta_0)$ on the line $\{(a_1,a_2)\in \Theta\times \Theta: a_1=a_2\}$. Furthermore, we do not require the derivative to be linear.
We stress that the function $t\mapsto \ell(t,\vartheta)$ does not have to be differentiable for all $\vartheta$. This can be seen with the loss function $\ell(t,\vartheta)=\norm{t-\vartheta}$. Clearly, it satisfies the condition with $\ell_0=\ell$ and $p=1$, as $(\vartheta_1,\vartheta_2)\mapsto \ell(\vartheta_1,\vartheta_2)$ is first order Hadamard directionally differentiable at $(\vartheta_0,\vartheta_0)$ in direction $(t,h)$ for all $t,h$ and for any $\vartheta_0$ with nonlinear derivative
\[\ell_0(t,h)=\phi^\prime_{\vartheta_0}(t,h)=\norm{t-h},\]but $t\mapsto\ell(t,\vartheta)$ is not differentiable at $t=\vartheta$. Note that this differentiability only holds at points $(\vartheta_1,\vartheta_2)$ on the line $\{(a_1,a_2)\in \Theta\times \Theta: a_1=a_2\}$. This also illustrates that we do not require the derivative to be linear in $(t,h)$. 

So far it is not clear that $\ell_0$ is in general also a loss function. The following proposition gives insight into the nature of the derivative. It shows that the directional derivative of a loss function provides a loss function itself in terms of the directions, in which we take the derivative.
\begin{proposition}
    For $\Theta \subseteq \mathbb{R}^d$, let $\ell:\Theta \times \Theta\to [0,\infty)$ be a loss function such that for some $\vartheta\in \Theta^\circ$, $p>0$ and any $t,h\in \mathbb{R}^d$
    \[\abs{\frac{\ell\left( \vartheta_{0}+rt,\vartheta_{0}+r h\right)}{r^{p}}-\ell_0(t,h)}
    \to 0,\quad r\to 0,\quad r>0\]for some function $\ell_0$ on $\mathbb{R}^d\times \mathbb{R}^d$. Then, $\ell_0$ is nonnegative and $t=h$ implies $\ell_0(t,h)=0$. If there exists a function $\zeta:\mathbb{R}^d \times \mathbb{R}^d \to  [0,\infty)$, which satisfies
    \[\zeta(t,h)>0,\text{if }t\neq h,\quad \zeta(\lambda t,\lambda h)=\lambda^p \zeta(t,h)\text{ and }  \zeta(t+a,h+a)=\zeta(t,h),\quad \forall a,t,h\in \mathbb{R}^d,\lambda>0 \]and there exist $\delta>0$ and $c>0$ such that
    \[\ell(t,\vartheta)\ge c\zeta(t,\vartheta)\]for all $t,\vartheta\in B_\delta(\vartheta_0)$, then $\ell_0(t,h)=0$ implies $t=h$. 
\end{proposition}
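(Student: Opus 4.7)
The plan is to treat the three claims one by one, in order of increasing difficulty, exploiting only the defining properties of a loss function, the pointwise limit hypothesis, and (for the last claim) the postulated lower bound together with the algebraic properties of $\zeta$.

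\textbf{Nonnegativity and vanishing on the diagonal.} For the nonnegativity, I would simply note that $\ell \ge 0$ implies $r^{-p}\ell(\vartheta_0+rt,\vartheta_0+rh)\ge 0$ for every $r>0$, so passing to the limit $r\to 0$ in the pointwise convergence assumption yields $\ell_0(t,h)\ge 0$ for all $t,h\in\mathbb{R}^d$. For the diagonal claim, if $t=h$ then $\vartheta_0+rt=\vartheta_0+rh$ and the loss function property forces $\ell(\vartheta_0+rt,\vartheta_0+rh)=0$ for every $r>0$; dividing by $r^p$ and letting $r\to 0$ gives $\ell_0(t,h)=0$.

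\textbf{Strict positivity off the diagonal.} This is the main substantive content. I would argue by contradiction: assume $\ell_0(t,h)=0$ for some $t\ne h$ and derive a contradiction from the lower bound $\ell\ge c\zeta$ on $B_\delta(\vartheta_0)\times B_\delta(\vartheta_0)$. For $r>0$ sufficiently small (depending on $\delta$, $\|t\|$, and $\|h\|$), both $\vartheta_0+rt$ and $\vartheta_0+rh$ lie in $B_\delta(\vartheta_0)$, so the pointwise lower bound applies and yields
\[
\ell(\vartheta_0+rt,\vartheta_0+rh)\;\ge\; c\,\zeta(\vartheta_0+rt,\vartheta_0+rh).
\]
Using translation invariance of $\zeta$ (with $a=-\vartheta_0$) followed by positive $p$-homogeneity gives
\[
\zeta(\vartheta_0+rt,\vartheta_0+rh)\;=\;\zeta(rt,rh)\;=\;r^{p}\zeta(t,h).
\]
Dividing the lower bound by $r^{p}$ and letting $r\to 0^+$, the pointwise convergence hypothesis gives $\ell_0(t,h)\ge c\,\zeta(t,h)$. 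Since $t\ne h$ forces $\zeta(t,h)>0$ by assumption, we obtain $\ell_0(t,h)\ge c\,\zeta(t,h)>0$, contradicting $\ell_0(t,h)=0$.

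\textbf{Main obstacle.} There is no serious obstacle here; the only care needed is to verify that $r$ can be chosen small enough for the lower bound on $\ell$ to apply (a uniform choice $r<\delta/\max(\|t\|,\|h\|,1)$ suffices) and to invoke the algebraic identities for $\zeta$ in the correct order — first translation invariance to center at the origin, then $p$-homogeneity to extract the factor $r^{p}$. These two identities are precisely what matches the scaling $r^{-p}$ appearing in the limit defining $\ell_0$, which is what makes the bound pass to the limit cleanly.
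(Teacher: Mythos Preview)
Your proof is correct and follows essentially the same approach as the paper: nonnegativity and the diagonal case are handled identically, and for the off-diagonal case both arguments use translation invariance and $p$-homogeneity of $\zeta$ to reduce $r^{-p}\ell(\vartheta_0+rt,\vartheta_0+rh)\ge c\,\zeta(t,h)$ and then pass to the limit. The only cosmetic difference is that you phrase the last step as a contradiction whereas the paper argues directly that $\ell_0(t,h)\ge c\,\zeta(t,h)>0$.
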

\begin{proof}
    By assumption
    \[\lim_{r\to 0,r>0} \frac{\ell\left( \vartheta_{0}+rt,\vartheta_{0}+r h\right)}{r^{p}}=\ell_0(t,h),\]which yields nonnegativity of $\ell_0$. As $\ell$ is a loss function we have for $t=h$ that
    \[\frac{\ell\left( \vartheta_{0}+rt,\vartheta_{0}+r h\right)}{r^{p}}=0\]for all $r>0$. Hence, also the limit is zero, i.e.\ $\ell_0(t,h)=0$ for $t=h$. Assume now there exist $\delta>0$ and $c>0$ such that
    \[\ell(t,\vartheta)\ge c\zeta(t,\vartheta)\]for all $t,\vartheta\in B_\delta(\vartheta_0)$. Let $t\neq h$. Then for all $r>0$
    \[\ell_0(t,h)\ge\frac{\ell\left( \vartheta_{0}+rt,\vartheta_{0}+r h\right)}{r^{p}}-\abs{\frac{\ell\left( \vartheta_{0}+rt,\vartheta_{0}+r h\right)}{r^{p}}-\ell_0(t,h)}. \]The second term on the right hand side vanishes as $r\to 0$. Choose $r>0$ small enough such that $\vartheta_{0}+rt,\vartheta_{0}+r h\in B_\delta(\vartheta_0)$. Then,
     \[\ell_0(t,h)\ge\frac{\ell\left( \vartheta_{0}+rt,\vartheta_{0}+r h\right)}{r^{p}}-\abs{\frac{\ell\left( \vartheta_{0}+rt,\vartheta_{0}+r h\right)}{r^{p}}-\ell_0(t,h)}\ge  c\zeta(t,h)-o(1). \]As $t\neq h$, it holds $\zeta(t,h)>0$. Taking the limit $r\to 0$ yields $\ell_0(t,h)>0$.
\end{proof}A natural choice for $\zeta$ is the $p$-th power of the Euclidean distance $\zeta(t,\vartheta)=\norm{t-\vartheta}^p$. 

The view of $\ell_0$ as a $p$-th order directional derivative also gives a new perspective on the limit process from \Cref{main_thm1} and \Cref{corollary1}, namely
\[Z(t)=\int_{\mathbb{R}^d}\ell_0(t,h)\diff \mathcal{N}\left(Y,I^{-1}_{\vartheta_0}\right)(h),\]where $Y\sim \mathcal{N}\left(0,I^{-1}_{\vartheta_0}\right)$. Considering now that $\ell_0(t,h)$ is a directional derivative of $\ell$ in direction $(t,h)$ we see that $Z(t)$ is the expectation of the directional derivative taken with respect to direction $h$ and with respect to the random measure $\mathcal{N}\left(Y,I^{-1}_{\vartheta_0}\right)$. Thus, for fixed $t$ in some sense $Z(t)$ measures the expected slope ($p=1$), curvature ($p=2$) etc. of the loss function $\ell$ at $(\vartheta_0,\vartheta_0)$ in the random direction $(t,h)$.  As a consequence of \Cref{main_thm1} a Bayes estimator, centered and rescaled, will converge weakly to the direction $t\in \mathbb{R}^d$, such that the expected slope ($p=1$), curvature ($p=2$) etc. of $\ell$ at $(\vartheta_0,\vartheta_0)$ in direction $(t,h)$ is minimal. If this is in the direction $t=Y$, a Bayes estimator is asymptotically efficient.

\begin{example}
    Consider a family of Gaussian distributions with unknown mean and covariance matrix equal to the $d\times d$ identity matrix, denoted as $\mathcal{I}$,
    $\{P_\vartheta=\mathcal{N}(\vartheta,\mathcal{I}^{-1}):\vartheta\in \Theta=\mathbb{R}^d \}$ with $\vartheta_0=(0,\dots,0)$. Consider the loss function $\ell(t,\vartheta)=\norm{t-\vartheta}^2$. In this case it holds $I_{\vartheta_0}=\mathcal{I}$ and
    \[Z(t)=\int_{\mathbb{R}^d}\ell_0(t,h)\diff \mathcal{N}\left(Y,I^{-1}_{\vartheta_0}\right)(h)=d+\sum_{i=1}^d(t_i-Y_i)^2.\]For $d=2$ and $\vartheta_0=(0,0)$ the mapping $t\mapsto Z(t)$ is visualized for fixed $Y=(1,1)$. Here $Z$ measures the expected curvature of $\ell$ at $\vartheta_0$.

    \begin{figure}[ht]
        \centering
        \includegraphics[width=0.8\textwidth]{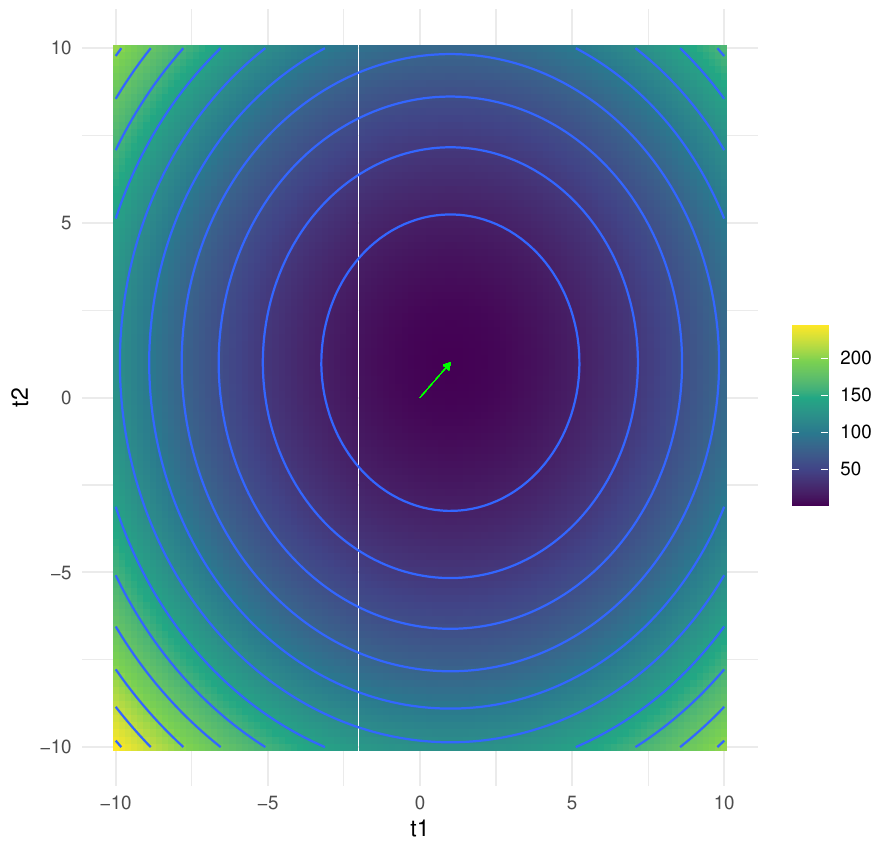}
        \caption{Plot of $(t_1,t_2)\mapsto Z(t_1,t_2)$ for $Y=(1,1)$. The green arrow is the vector $(1,1)$.}
        \label{fig:enter-label}
    \end{figure}
    \end{example}

\subsection{$L^p$-derivative} \label{ss:lp:diff}
For $p\in [1,\infty)$ and a measure space $(\mathcal{X},\mathcal{A},\mu)$
define the $L^p$ space
\[L^p=\{f:\mathcal{X}\to \mathbb{R}:f\text{ is }\mathcal{A}\text{-measurable},\int_{\mathcal{X}}\abs{f(x)}^p\diff \mu(x)<\infty\}.\]We define the seminorm
\[\norm{f}_{L^p}=\left(\int_{\mathcal{X}}\abs{f(x)}^p\diff \mu(x)\right)^{1/p},\quad f\in L^p.\]Let $\mathcal{M}=\{f_\vartheta\in L^p: \vartheta \in \Theta\}$, with $\Theta\subseteq \mathbb{R}^d$, be a parametric family of functions. We say the family $\mathcal{M}$ is \emph{$L^p$-differentiable} at $\vartheta_0\in \Theta^\circ$, if there exists
$\dot f_{\vartheta_0}:\mathcal{X}\to \mathbb{R}^d $ with $\norm{\dot f_{\vartheta_0}}\in L^p$ and
\[\norm{f_\vartheta - f_{\vartheta_0}- \s{\vartheta-\vartheta_0}{\dot f_{\vartheta_0}} }^p_{L^p}=o(\norm{\vartheta-\vartheta_0}^p),\]as $\vartheta\to \vartheta_0$. In this form we can see directly that this is the Fréchet differentiability in the space $L^p$. We consider three examples of $L^p$-differentiability: $L^2$-differentiability of the square root of a density, $L^1$-differentiability of a density and $L^1$-differentiability of a distribution function.

\emph{Differentiability in quadratic mean.}
 We say that a parametric family of probability distributions $\{P_\vartheta:\vartheta\in \Theta\}$, dominated by some measure $\mu$ on $\mathcal{X}$, is \emph{differentiable in quadratic mean} at $\vartheta_0 \in \Theta^\circ$, if there is a vector valued function $\ell'_{\vartheta_0}:\mathcal{X}\to \mathbb{R^d}$ such that
\[\int_{\mathcal{X}} \left[p^{1/2}_{\vartheta_0+h}(x)-p^{1/2}_{\vartheta_0}(x)-\frac{1}{2}\left\langle{h},{\ell'_{\vartheta_0}(x)p^{1/2}_{\vartheta_0}(x)}\right\rangle\right]^2\diff \mu(x)=o(\norm{h}^2),\quad h\to 0.\]Here $p_\vartheta=dP_\vartheta/d\mu$ and $\ell'_{\vartheta_0}$ is called the quadratic mean derivative. In this case, we define the Fisher information matrix at $\vartheta_0$ as
\[I_{\vartheta_{0}}=\int_{\mathcal{X}} \ell_{\vartheta_0}'(x)\left(\ell_{\vartheta_0}'(x)\right)^\intercal \diff P_{\vartheta_0}(x).\] Note that the differentiability in quadratic mean is the same concept as the $L^2$-differentiability of $\{p^{1/2}_\vartheta:\vartheta \in \Theta\}$. Differentiability in quadratic mean implies the existence of the Fisher matrix and
\[\int_{\mathcal{X}}\ell'_{\vartheta_0}(x)\diff P_{\vartheta_0}(x)=0,\]see \citet[Theorem~7.2]{v}. This type of differentiability is exactly the right concept of smoothness of the model required for the BvM theorem (\Cref{B.v.M:}).

\emph{$L^1$-differentiability.}
Differentiability in quadratic mean at $\vartheta_0$ implies that the $L^1$-differentiability of $p_\vartheta$ at $\vartheta_0$ and the derivative is given by $\dot p_{\vartheta_0}=\ell'_{\vartheta_0}p_{\vartheta_0}$, see \citet[Proposition~1.110]{liese}. So the differentiability in quadratic mean is a stronger property than $L^1$-differentiability of the density.

The following proposition gives conditions for $L^1$-differentiability of the distribution function $F_\vartheta(t)=\int_{-\infty}^t p_\vartheta(x)dx$. Under the condition that $\mu(\mathcal{X})<\infty$, the result was shown in Proposition B.2 from the supplement of \cite{Bernton2019}. We extend this by allowing $\mathcal{X}$ to be unbounded.
\begin{proposition}\label[proposition]{L1_diff_cdf}
    Let $\{P_\vartheta:\vartheta\in \Theta\subseteq \mathbb{R}^d\}$ be a parametric family on $\mathcal{X}\subseteq \mathbb{R}$ with densities $\{p_\vartheta:\vartheta\in \Theta\}$ with respect to some $\sigma$-finite measure $\mu$ on $\mathcal{X}$ and there exists $\delta>0$ such that the family is differentiable in quadratic mean at $\vartheta$ for any $\vartheta \in B_\delta(\vartheta_0)$ with derivative $\ell'_{\vartheta}$. Assume one of the two conditions:
    \begin{enumerate}[label=(\roman*)]
        \item $\mu(\mathcal{X})<\infty$.
    \\
    \item \label{propA3_2} For any $x\in \mathcal{X}$ the mapping $\vartheta\mapsto p_\vartheta(x)$ is continuously differentiable at $\vartheta_0\in \Theta^\circ$ with derivative $\dot p_{\vartheta_0}$.  \end{enumerate}Then, it holds for \( \dot F_{\vartheta_0}(x)=\int_{-\infty}^x\ell'_{\vartheta_0}(x)\diff P_{\vartheta_0}(x)\)  that
\[\int_{\mathcal{X}} \abs{F_{\vartheta}(x)-F_{\vartheta_0}(x)-\s{\vartheta-\vartheta_0}{\dot F_{\vartheta_0}(x)}}\diff \mu(x) =o(\norm{\vartheta-\vartheta_0}),\quad \norm{\vartheta-\vartheta_0}\to 0.\]
\end{proposition}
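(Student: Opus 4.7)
Set $r:=\vartheta-\vartheta_0$, $\dot p_{\vartheta_0}:=\ell'_{\vartheta_0}\,p_{\vartheta_0}$, and
\(
g_r(y) := p_{\vartheta_0+r}(y)-p_{\vartheta_0}(y)-\s{r}{\dot p_{\vartheta_0}(y)}.
\)
Observe that
\(
F_\vartheta(x)-F_{\vartheta_0}(x)-\s{r}{\dot F_{\vartheta_0}(x)} = \int_{-\infty}^x g_r(y)\,\diff\mu(y),
\)
so the target estimate reduces to showing
\(
\int_\mathcal{X}\abs{\int_{-\infty}^x g_r(y)\,\diff\mu(y)}\,\diff\mu(x) = o(\norm{r}).
\)
Two standard inputs are at hand. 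First, differentiability in quadratic mean at $\vartheta_0$ implies $L^1(\mu)$-differentiability of the density by \citet[Proposition~1.110]{liese}, yielding $\norm{g_r}_{L^1(\mu)}=o(\norm{r})$. Second, $\int\ell'_{\vartheta_0}\,\diff P_{\vartheta_0}=0$ by \citet[Theorem~7.2]{v}, so $\int g_r\,\diff\mu=0$ and therefore $\int_{-\infty}^x g_r\,\diff\mu=-\int_x^\infty g_r\,\diff\mu$.

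\emph{Case (i) ($\mu(\mathcal{X})<\infty$).} A one-line Fubini bound does the job:
\[
\int_\mathcal{X}\abs{\int_{-\infty}^x g_r\,\diff\mu}\,\diff\mu(x)\le\int_\mathcal{X}\int_\mathcal{X}\mathbb{I}(y\le x)\abs{g_r(y)}\,\diff\mu(y)\,\diff\mu(x)\le\mu(\mathcal{X})\,\norm{g_r}_{L^1(\mu)}=o(\norm{r}).
\]
This mirrors the argument of \citet[Proposition~B.2]{Bernton2019}.

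\emph{Case (ii) (general $\sigma$-finite $\mu$).} The Fubini estimate is now too coarse, and one must exploit the pointwise continuous differentiability of $\vartheta\mapsto p_\vartheta(y)$. The plan is to exhaust $\mathcal{X}$ by sets $K_M$ of finite $\mu$-measure (using $\sigma$-finiteness) and split the outer integral into $K_M$ and its complement. On $K_M$ the Fubini step from case~(i) yields a contribution of order $\mu(K_M)\,\norm{g_r}_{L^1(\mu)}$. On $\mathcal{X}\setminus K_M$, I would combine the symmetric representation $E_r(x):=-\int_x^\infty g_r\,\diff\mu$ with the identity
\[
g_r(y)=\int_0^1\s{r}{\dot p_{\vartheta_0+sr}(y)-\dot p_{\vartheta_0}(y)}\,\diff s,
\]
which follows from pointwise continuous differentiability and the fundamental theorem of calculus, to reduce the tail contribution to $\norm{r}$ times the modulus of continuity of $\vartheta\mapsto\dot p_\vartheta$ at $\vartheta_0$ integrated against an $L^1(\mu)$-envelope. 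Such an envelope exists because $\norm{\dot p_{\vartheta_0}}\in L^1(\mu)$ by Cauchy--Schwarz and the finiteness of the Fisher information ($\int\norm{\dot p_{\vartheta_0}}\,\diff\mu\le I_{\vartheta_0}^{1/2}<\infty$), while DQM at every $\vartheta\in B_\delta(\vartheta_0)$ ensures the analogous integrability of $\dot p_\vartheta$ uniformly along the segment $[\vartheta_0,\vartheta_0+r]$. Letting $M\to\infty$ slowly as $\norm{r}\to 0$ balances both contributions.

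The main obstacle is case~(ii). Pointwise continuous differentiability provides $\dot p_\vartheta(y)\to\dot p_{\vartheta_0}(y)$ for each $y$, but not uniformly in $y$; upgrading this to $L^1(\mu)$-convergence of the increment $\dot p_{\vartheta_0+sr}-\dot p_{\vartheta_0}$, uniformly in $s\in[0,1]$, requires a Vitali-type uniform integrability argument, sustained by the $L^1(\mu)$-integrability of $\dot p_\vartheta$ throughout $B_\delta(\vartheta_0)$ inherited from DQM in this ball.
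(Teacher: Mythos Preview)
Your case~(i) is correct and matches the paper's argument exactly.

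For case~(ii) the paper takes a more direct route than your truncation scheme. It keeps the inequality $\|r\|^{-1}\int_{\mathcal{X}}\bigl|F_\vartheta-F_{\vartheta_0}-\langle r,\dot F_{\vartheta_0}\rangle\bigr|\,\diff\mu\le\int_{\mathcal{X}}\|r\|^{-1}\mathcal{F}(t)\,\diff\mu(t)$ with $\mathcal{F}(t)=\int_{(-\infty,t]}|g_r|\,\diff\mu$, notes that $\|r\|^{-1}\mathcal{F}(t)\to0$ for each fixed $t$ by $L^1$-differentiability, and then invokes dominated convergence directly, using the bound $\|r\|^{-1}\mathcal{F}(t)\le\sup_{\tilde\vartheta\in B_\delta(\vartheta_0)}\int_{\mathcal{X}}\|\dot p_{\tilde\vartheta}-\dot p_{\vartheta_0}\|\,\diff\mu$ coming from the mean value theorem---which is exactly your FTC identity for $g_r$, read as a pointwise estimate. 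No exhaustion by $K_M$ and no balancing of $M$ against $\|r\|$ appears.

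Your truncation plan has a concrete gap in the tail step. An $L^1(\mu)$-envelope $H$ for $|g_r|/\|r\|$ yields only $|E_r(x)|\le\|r\|\int_x^\infty H\,\diff\mu$; integrating this over $x\in\mathcal{X}\setminus K_M$ and applying Fubini gives $\|r\|\int H(y)\,\mu\bigl((\mathcal{X}\setminus K_M)\cap(-\infty,y]\bigr)\,\diff\mu(y)$, and the measure factor is unbounded in $y$ (for Lebesgue $\mu$ and $K_M=[-M,M]$ it equals $y-M$ when $y>M$). Thus $H\in L^1(\mu)$ does not suffice; you would effectively need $\int|y|\,H(y)\,\diff\mu(y)<\infty$, which neither DQM in a neighbourhood nor pointwise continuous differentiability supplies. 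The Vitali/uniform-integrability step you sketch can at best upgrade pointwise convergence of $\dot p_\vartheta$ to $L^1(\mu)$-convergence, but that only re-establishes $\|g_r\|_{L^1}=o(\|r\|)$ and does nothing about the extra weight the outer integration in $x$ introduces. Consequently the balancing $M=M(\|r\|)$ cannot succeed as outlined: the tail term is not shown to be $o(\|r\|)$ for any choice of $M$, and may even be infinite.
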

\begin{proof}
%If condition 2 holds, 7.6 Lemma from \cite{v} yields differentiability in quadratic mean with $\ell'_{\vartheta_0}(x)=\frac{\dot p_{\vartheta_0}(x)}{ p_{\vartheta_0}(x)}$. 
By differentiability in quadratic mean it follows from \citet[Proposition 1.110]{liese} that the $L^1$-derivative of $p_\vartheta$ at $\vartheta_0$ is given by $\dot p_{\vartheta_0}=\ell'_{\vartheta_0}p_{\vartheta_0}$.
    Define 
    \begin{align*}{\rev \mathcal{F}_\vartheta(t)} & =\int_{-\infty}^t \abs{p_\vartheta(x)- p_{\vartheta_0}(x)-\s{\vartheta-\vartheta_0}{\dot p_{\vartheta_0}(x)}}\diff \mu(x) \\
    &=\norm{\left(p_\vartheta- p_{\vartheta_0}-\s{\vartheta-\vartheta_0}{\dot p_{\vartheta_0}}\right)\mathbb{I}(\cdot \le t) }_{L^1}. \end{align*}Then, it holds 
\[\norm{\vartheta-\vartheta_0}^{-1}\int_{\mathcal{X}} \abs{F_{\vartheta}(t)-F_{\vartheta_0}(t)-\s{\vartheta-\vartheta_0}{\dot F_{\vartheta_0}(t)}}\diff \mu(t)\le \int_{\mathcal{X}}\frac{{\rev \mathcal{F}_\vartheta(t)}}{\norm{\vartheta-\vartheta_0}}\diff \mu(t).\]For any fixed $t\in \mathcal{X}$ 
\[0\le \lim_{\vartheta\to \vartheta_0}\frac{{\rev \mathcal{F}_\vartheta(t)}}{\norm{\vartheta-\vartheta_0}}\le \lim_{\vartheta\to \vartheta_0}\frac{\norm{p_\vartheta- p_{\vartheta_0}-\s{\vartheta-\vartheta_0}{\dot p_{\vartheta_0}} }_{L^1} }{\norm{\vartheta-\vartheta_0}}=0\]by $L^1$-differentiability of $p_\vartheta$ at $\vartheta_0$. If $\mathcal{X}$ is bounded, we are done. If condition \ref{propA3_2}  holds, we use the dominated convergence theorem with the integrable upper bound
\[\frac{{\rev \mathcal{F}_\vartheta(t)}}{\norm{\vartheta-\vartheta_0}}\le \sup_{\tilde \vartheta \in B_\delta(\vartheta_0)}\norm{ \norm{\dot p_{\tilde \vartheta}-\dot p_{\vartheta_0} }}_{L^1}\]by choosing $\delta>\norm{\vartheta-\vartheta_0}$ small enough. The upper bound is obtained by the mean value theorem. 
\end{proof}
As {\rev demonstrated} in Section~\ref{ss:stein}, {\rev$L^1$-differentiability of the distribution function}
 is closely related to the {\rev directional Hadamard differentiability} of the 1-Wasserstein distance.
\subsection{Differentiability of the parametric Wasserstein distance}\label[appendix]{diff_ws}
To illustrate that the loss function $\ell_{W_2}$ is indeed differentiable (with the usual notion of differentiability on Euclidean spaces), we state sufficient conditions for the existence of the first two derivatives. The third derivative can be obtained in an analogous fashion as the second under additional regularity assumptions, which allow to interchange order of integration and derivation. 

Let $\mathcal{W}_2(\mathcal{X})$ be the space of probability measure on $\mathcal{X}$ with finite second moment. {\rev For a parametric family of probability measures  $\{P_{\vartheta}:\vartheta\in \Theta\}\subset \mathcal{W}_2(\mathcal{X})$ that are absolutely continuous with respect to the Lebesgue measure,  we denote, for
any $t,\vartheta\in \Theta$, 
	\[
	\Phi(t,\vartheta):=\left\{\phi\in L^1(P_t):W_2^2(P_t,P_\vartheta)  =\int_{\mathcal{X}}\phi(x)\diff P_t(x)+\int_{\mathcal{X}}\phi^c(y)\diff P_\vartheta(y)\right\}	
	\]
	as the set of solutions to the dual formulation (see e.g.~\citealp{San15}), where the \emph{$c$-transform} is 
	\begin{equation}\label{e:c:transform}
	\phi^c(y)=\inf_{x\in \mathcal{X}}\norm{x-y}^2-\phi(x).
	\end{equation}
	The set $\Phi(t,\vartheta)$ is nonempty by \cite[Theorem 5.10]{villani2}. }
\begin{theorem}\label[theorem]{grad_ws}
	Let $k\in \mathbb{N}$, $\mathcal{X}\subseteq \mathbb{R}^k$ be a convex set, let $\Theta\subseteq\mathbb{R}^d$ be open and denote the Lebesgue measure on $\mathbb{R}^k$ as $\lambda$. Suppose that $\{P_{\vartheta}:\vartheta\in \Theta\}\subset \mathcal{W}_2(\mathcal{X})$ is a parametric family of probability measures with $P_\vartheta \ll \lambda$ for all $\vartheta\in \Theta$. We denote $p_\vartheta$ as the Lebesgue density of $P_\vartheta$ and assume that $\vartheta\mapsto p_\vartheta(x)$ is differentiable for any $x\in \mathcal{X}$ and $p_\vartheta(x)>0$ for any $x\in \mathcal{X}$ and $\vartheta\in \Theta$. For $t\in \Theta$ and $\delta>0$, let 
	\[
	B_\delta(t)=\left\{s\in \Theta:\norm{s-t}\le \delta\right\}.
	\]
	Furthermore, for some fixed $t_0\in \Theta$ and for any $t, \vartheta \in \Theta$, {\rev we assume that there exists a family of nonempty subsets $\Phi_0(t, \vartheta) \subseteq \Phi(t, \vartheta)$ such that:
	\begin{enumerate}[label=(\roman*)]
		\item\label{ThmA4_1}
		 For any sequence $t_n\to t_0$, there exists a sequence $\phi_{t_n}^\vartheta\in \Phi_0(t_n,\vartheta)$ and a fixed element $\phi_{t_0}^\vartheta \in \Phi_0(t_0,\vartheta)$ such that,  as $n\to \infty$, for $\lambda$-a.e.~$x\in\mathcal{X}$,
		\[ 
		\phi_{t_n}^\vartheta(x)- \phi_{t_0}^\vartheta(x)\longrightarrow 0.
		\] 
		\item\label{ThmA4_2} 
		There exists  $\delta>0$ such that, for all $i=1,\dots,d$, 
		\[ 
		\bigintssss_{\mathcal{X}} \sup_{t,\tilde{t}\in B_\delta(t_0)} \sup_{\phi \in \Phi_0(t, \vartheta)}\abs{\phi(x)\frac{\partial}{\partial v_i}p_{v}(x)\Big\vert_{v=\tilde{t}} }\diff x<\infty.
		\]
	\end{enumerate}}
	Then, the function $t\mapsto W_2^2(P_t,P_\vartheta)$ is differentiable at $t_0\in \Theta$ for any $\vartheta\in \Theta$ with derivative \[\nabla_t W_2^2(P_t,P_\vartheta)\vert_{t=t_0}=\int_{\mathcal{X}}\phi_{t_0}^\vartheta(x)\nabla_tp_t(x)\vert_{t=t_0}\diff x\]for any $\phi_{t_0}^\vartheta\in \Phi_0(t_0,\vartheta)$.
\end{theorem}
\begin{proof}
	Let $e_i$, $i=1\dots,d$, denote the unit vectors and set\[I_{i}(h):=\frac{W_2^2(P_{t_0+he_i},P_\vartheta)-W_2^2(P_{t_0},P_\vartheta)}{h},\quad h\neq0: t_0+he_i\in \Theta. \]Then, if the limit exists, it holds
	\[
	\lim\limits_{h\to 0}I_i(h)={\rev\frac{\partial}{\partial v_i} W_2^2(P_v,P_\vartheta)\Big\vert_{v=t_0}}.
	\] 
	Recall the dual formulation of the squared 2-Wasserstein distance \cite[Theorem~5.10]{villani2}. For any $\mu,\nu \in \mathcal{W}_2(\mathcal{X})$, it holds
	\[
	W_2^2(\mu,\nu)=\sup_{\phi\in L^1(\mu)}\left(\int_{\mathcal{X}}\phi(x)\diff \mu(x)+\int_{\mathcal{X}}\phi^c(y)\diff \nu(y)\right),
	\]
	{\rev where $\phi^c$ is the $c$-transform of $\phi$, defined in~\eqref{e:c:transform}. 
	By condition~\ref{ThmA4_1}, for any $i=1,\ldots,d$, there exist a sequence $\phi_{t_0+e_ih}^\vartheta\in \Phi_0(t_0+e_i h,\vartheta)\subset \Phi(t_0+e_i h,\vartheta)$ and a fixed 
	$\phi_{t_0}^\vartheta\in \Phi_0(t_0,\vartheta)$} such that
    \begin{equation}
         \phi_{t_0+e_ih}^\vartheta(x)-\phi_{t_0}^\vartheta(x)\to 0,\quad h\to 0, \label{Thm4.1.eq1}
    \end{equation}
    for $\lambda$-a.e.~$x\in \mathcal{X}$. {\rev By the dual formulation, we obtain
    \[ 
    W_2^2(P_{t_0},P_\vartheta)\ge \int_{\mathcal{X}}\frac{\phi_{t_0+e_ih}^\vartheta(x)p_{t_0}(x)}{h}\diff x
		+\int_{\mathcal{X}}\frac{\left(\phi_{t_0+e_ih}^\vartheta\right)^c(x)p_{\vartheta}(x)}{h}\diff x 
		\]
		and consequently}
	\begin{align*}
		I_i(h)&\le \int_{\mathcal{X}}\frac{\phi_{t_0+e_ih}^\vartheta(x)p_{t_0+e_ih}(x)}{h}\diff x
		+\int_{\mathcal{X}}\frac{\left(\phi_{t_0+e_ih}^\vartheta\right)^c(x)p_{\vartheta}(x)}{h}\diff x
		\\&\hspace{5.4cm}-\int_{\mathcal{X}}\frac{\phi_{t_0+e_ih}^\vartheta(x)p_{t_0}(x)}{h}\diff x
		-\int_{\mathcal{X}}\frac{\left(\phi_{t_0+e_ih}^\vartheta\right)^c(x)p_{\vartheta}(x)}{h}\diff x\\
		&=\int_{\mathcal{X}}\frac{\phi_{t_0+e_ih}^\vartheta(x)\left(p_{t_0+e_ih}(x)-p_{t_0}(x)\right)}{h}\diff x\\
		&=\int_{\mathcal{X}}\frac{\phi_{t_0}^\vartheta(x)\left(p_{t_0+e_ih}(x)-p_{t_0}(x)\right)}{h}\diff x+\int_{\mathcal{X}}\frac{\left(\phi_{t_0+e_ih}^\vartheta(x)-\phi_{t_0}^\vartheta(x)\right)\left(p_{t_0+e_ih}(x)-p_{t_0}(x)\right)}{h}\diff x.
	\end{align*}
	 The mean value theorem implies that for fixed $x\in \mathcal{X}$, $t_0\in \Theta$ and for $\abs{h}\le \delta$ it holds
	\[\abs{ \frac{p_{t_0+e_ih}(x)-p_{t_0}(x)}{h}}={\rev \abs{\frac{\partial}{\partial v_i}p_v(x)\Big\vert_{v=\tilde{t}} } }\]for some $\tilde{t}=t_0+e_ih_0$ and $\abs{h_0}\le \delta$. Denote $B_\delta(t_0)=\{s\in \Theta:\norm{s-t_0}\le \delta\}$. For any fixed $x\in \mathcal{X}$, we obtain the upper bound 
	\begin{align*} 
	&\abs{ \left(\phi_{t_0+e_ih}^\vartheta(x)-\phi_{t_0}^\vartheta(x)\right) \frac{p_{t_0+e_ih}(x)-p_{t_0}(x)}{h}}\le \sup_{t,\tilde{t}\in B_\delta(t_0)}{\rev\sup_{\phi \in \Phi_0(t, \vartheta)}\abs{\left(\phi(x)-\phi_{t_0}^\vartheta(x)\right) \frac{\partial}{\partial v_i}p_{v}(x) \Big \vert_{v=\tilde{t}} } }
    \\ &\le {\rev \sup_{t,\tilde{t}\in B_\delta(t_0)} \sup_{\phi \in \Phi_0(t, \vartheta)}\abs{\phi(x) \frac{\partial}{\partial v_i}p_{v}(x) \Big \vert_{v=\tilde{t}} } +\sup_{\tilde{t}\in B_\delta(t_0)} \abs{\phi^\vartheta_{t_0}(x) \frac{\partial}{\partial v_i}p_{v}(x) \Big \vert_{v=\tilde{t}} } }   \\
		&\le2 \sup_{t,\tilde{t}\in B_\delta(t_0)}{\rev \sup_{\phi \in \Phi_0(t, \vartheta)}\abs{\phi(x) \frac{\partial}{\partial v_i}p_{v}(x) \Big \vert_{v=\tilde{t}} } }. \end{align*} 
		By condition~\ref{ThmA4_2}, we can choose $\delta>0$ small enough such that the upper bound is integrable. Analogously, we obtain the following upper bound that is integrable, 
	\[ \abs{ \phi_{t_0}^\vartheta(x) \frac{p_{t_0+e_ih}(x)-p_{t_0}(x)}{h}}\le\sup_{t,\tilde{t}\in B_\delta(t_0)}{\rev \sup_{\phi \in \Phi_0(t, \vartheta)}\abs{\phi(x)  \frac{\partial}{\partial v_i}p_{v}(x) \Big\vert_{v=\tilde{t}} } }. \]  
	Consequently, Lebesgue's dominated convergence theorem and differentiability of the mapping  $\vartheta\mapsto p_\vartheta(x)$  imply
	\begin{align*}
	&\limsup_{h\to 0}I_i(h)\\
	\le&\lim\limits_{h\to 0}\Biggl[ \int_{\mathcal{X}}\frac{\phi_{t_0}^\vartheta(x)\left(p_{t_0+e_ih}(x)-p_{t_0}(x)\right)}{h}\diff x+\int_{\mathcal{X}}\frac{\left(\phi_{t_0+e_ih}^\vartheta(x)-\phi_{t_0}^\vartheta(x)\right)\left(p_{t_0+e_ih}(x)-p_{t_0}(x)\right)}{h}\diff x\Biggr] \\=&\int_{\mathcal{X}}\phi_{t_0}^\vartheta(x) {\rev \frac{\partial}{\partial v_i}p_v(x)\Big\vert_{v=t_0} }\diff x.
	\end{align*} 
	On the other hand, we have
	\begin{align*}
		I_i(h)&\ge \int_{\mathcal{X}}\frac{\phi_{t_0}^\vartheta(x)p_{t_0+e_ih}(x)}{h}\diff x
		+\int_{\mathcal{X}}\frac{\left(\phi_{t_0}^\vartheta\right)^c(x)p_{\vartheta}(x)}{h}\diff x
		\\&\hspace{4.8cm}-\int_{\mathcal{X}}\frac{\phi_{t_0}^\vartheta(x)p_{t_0}(x)}{h}\diff x
		-\int_{\mathcal{X}}\frac{\left(\phi_{t_0}^\vartheta\right)^c(x)p_{\vartheta}(x)}{h}\diff x\\
		&=\int_{\mathcal{X}}\frac{\phi_{t_0}^\vartheta(x)\left(p_{t_0+e_ih}(x)-p_{t_0}(x)\right)}{h}\diff x.
	\end{align*}The dominated convergence theorem yields
	\[\liminf_{h\to 0}I_i(h)\ge \int_{\mathcal{X}}\phi_{t_0}^\vartheta(x) {\rev \frac{\partial}{\partial v_i}p_v(x)\Big\vert_{v=t_0} }\diff x.\] Hence, we have derived that
	\[
	\nabla_t W_2^2(P_t,P_\vartheta)\vert_{t=t_0}=\int_{\mathcal{X}}\phi_{t_0}^\vartheta(x)\nabla_tp_t(x)\vert_{t=t_0}\diff x. 
	\]
    {This concludes the proof.}
\end{proof}
In \Cref{grad_ws}, condition~\ref{ThmA4_1} is a continuity condition on the dual solutions in the parameter. By \citet[Theorem~2]{goett}, the dual solutions are unique up to additive constants. Moreover, condition~\ref{ThmA4_2} implies
that \[	\bigintssss_{\mathcal{X}} {\rev \sup_{v\in B_\delta(t_0)} \abs{\frac{\partial}{\partial v_i}p_{v}(x)} } \diff x<\infty\] and consequently
\[\int_{ \mathcal{X}}\nabla_tp_t(x)\vert_{t=t_0}\diff x=0.\]Hence, the formula of the gradient of the Wasserstein distance is independent of the choice of the concrete potential and it holds {\rev 
\[
\nabla_t W_2^2(P_t,P_\vartheta)\vert_{t=t_0}=\int_{\mathcal{X}}\phi(x)\nabla_tp_t(x)\vert_{t=t_0}\diff x=\int_{\mathcal{X}}\psi(x)\nabla_tp_t(x)\vert_{t=t_0}\diff x
\] 
for any $\phi,\psi\in \Phi(t_0,\vartheta)$. The derivative of the squared 2-Wasserstein distance depends on $\vartheta$ only through $\phi_{t_0}^\vartheta\in \Phi(t_0,\vartheta)$. The collection of subsets $\{\Phi_0(t,\vartheta):t\in \Theta\}$ is typically chosen so that each set consists of potentials in $\Phi(t,\vartheta)$ with a fixed additive constant. In this case, $\Phi_0(t,\vartheta)$ is a singleton for every $t,\vartheta \in \Theta$.}

If the space $\mathcal{X}$ is a subset of the real line, we obtain a more concrete form for the second derivative and sufficient conditions for differentiability of the mapping $t\mapsto\nabla_t W_2^2(P_t,P_\vartheta)$. The next result is inspired by Theorem~1 from \cite{differentialwasser}, but does not assume that $\mathcal{X}$ is compact. We denote $\overline{\mathbb{R}}=\mathbb{R}\cup \{-\infty,\infty\}$ as the extended real numbers.

\begin{theorem}\label[theorem]{grad_ws_real} 
Assume that $\mathcal{X}=(a,b),$ where $a,b\in \overline{\mathbb{R}}$ and $a<b$. Let $\Theta\subseteq\mathbb{R}^d$ be open and $\{P_{\vartheta}:\vartheta\in \Theta\}\subset \mathcal{W}_2(\mathcal{X})$ be a parametric family of probability measures that are absolutely continuous with respect to  the Lebesgue measure on $\mathbb{R}$. The Lebesgue measure restricted to $\mathcal{X}$ is denoted as $\lambda$. Furthermore, suppose that for any $\vartheta\in \Theta$ we have $p_\vartheta(x)>0$ for $x\in \mathcal{X}$ and $p_\vartheta(x)=0$ for $x\notin\mathcal{X}$.
	We will denote $F_\vartheta$ as the cumulative distribution function of $P_\vartheta$, i.e.\ $F_\vartheta(x)=\int_{-\infty}^{x}p_\vartheta(y)\diff y$ and $F_\vartheta^{-1}$ as the generalized inverse of $F_\vartheta$, i.e.
	\[F_\vartheta^{-1}(u)=\inf\{x:F_\vartheta(x)\ge u\}.\]Let $T_t^\vartheta(x)$ denote the optimal transport map from $P_t$ to $P_\vartheta$. For $t\in \Theta$ and $\delta>0$ define
	\[B_\delta(t):=\{\vartheta \in \Theta:\norm{t-\vartheta}\le \delta\}. \]
	In addition, for some fixed $t_0\in \Theta$ we make the following regularity assumptions: \begin{enumerate}[label=(\roman*)]
		\item \label{ThmA5_1} The map $\vartheta\mapsto p_\vartheta(x)$ is differentiable for any $x\in \mathcal{X},\vartheta \in \Theta$ and for any $\vartheta\in \Theta$ it exists $\delta>0$ such that \[\int_{\mathcal{X}}{\rev  \sup_{v\in B_\delta(t_0)}  \abs{\frac{\partial}{\partial v_i}p_v(x)} }\diff x <\infty \]for all $i\in\{1,\dots,d\}$.
		\item \label{ThmA5_2} The map $\vartheta\mapsto F_\vartheta(x)$ is twice differentiable for any $x\in \mathcal{X},\vartheta \in \Theta$, and for any $\vartheta\in \Theta$ there exists $\eta>0$ such that\[\int_{\mathcal{X}}{\rev  \sup_{v\in B_\eta(t_0) }\abs{\left(T_{v}^\vartheta(x)-x\right)\left(\frac{\partial^2}{\partial v_i v_j}F_v(x)\right)+\left(\frac{\partial}{\partial v_i}T_v^\vartheta(x)\right)\left(\frac{\partial}{\partial v_j}F_v(x)\right)} }\diff x<\infty \]for all $i,j\in\{1,\dots,d\}$.
		\item \label{ThmA5_3} For any $\vartheta\in \Theta$ and $x_0\in \mathcal{X}$, it holds  {\rev
		\[\lim\limits_{x\to a}\nabla_tF_t(x)\vert_{t=t_0}\bigintssss_{x_0}^x \left(y-T_{t_0}^\vartheta(y)\right) \diff y=0 ,\;      \lim\limits_{x\to b}\nabla_tF_t(x)\vert_{t=t_0}\bigintssss_{x_0}^x \left(y-T_{t_0}^\vartheta(y)\right) \diff y=0.\]}
	\end{enumerate} 
	If the mapping $t\mapsto W_2^2(P_t,P_\vartheta)$ is differentiable for any $\vartheta\in \Theta$, {\rev with the derivative  \[\nabla_t W_2^2(P_t,P_\vartheta)\vert_{t=t_0}=\int_{\mathcal{X}}\phi_{t_0}^\vartheta(x)\nabla_tp_t(x)\vert_{t=t_0}\diff x\]for any $\phi_{t_0}^\vartheta\in \Phi(t_0,\vartheta)$,} then the second derivative at $t_0$ exists, and it holds
	\begin{align*}
		\nabla_t^2W_2^2(P_t,P_\vartheta)\vert_{t=t_0}&=2\bigg(\int_{\mathcal{X}}(T_{t_0}^\vartheta(x)-x)\nabla^2_t F_t(x)\vert_{t=t_0} \diff x\\&\qquad+\int_{\mathcal{X}}\left(\frac{\partial}{\partial x}T_{t_0}^\vartheta(x)\right)\frac{\nabla_t F_t(x)\vert_{t=t_0} \left(\nabla_t F_t(x)\vert_{t=t_0}\right)^\intercal}{p_{t_0}(x)}\diff x\bigg).
	\end{align*}
\end{theorem}

\begin{proof} 
By \citet[Theorem~2.18]{villani1}, the optimal transport map is given by $T_{t_0}^\vartheta(x)=F^{-1}_\vartheta(F_{t_0}(x))$. By \citet[Theorem~1.2]{mccan}, it holds, for any $\phi_{t_0}^\vartheta(x)\in \Phi(t_0,\vartheta)$,
	\begin{equation}
	\frac{\partial}{\partial x}\phi_{t_0}^\vartheta(x)=2(x-T_{t_0}^\vartheta(x)) \label{thm16:eq1}
	\end{equation}
for $P_{t_0}$-a.e.~$x\in \mathcal{X}$. As the density of $P_{t_0}$ is strictly positive on $\mathcal{X}$, we conclude that {\rev(\ref{thm16:eq1})}
 also holds $\lambda$-a.e.\ $x\in \mathcal{X}$. Hence, for any $C\in \mathbb{R}$ and {\rev $x_0\in \mathcal{X}$}, the function
	\begin{equation}{\rev x\mapsto \int_{x_0}^x 2\left(y-T_{t_0}^\vartheta(y)\right)\diff y+C, }\label{ta2_1}\end{equation} is included in the set $\Phi(t_0,\vartheta)$.
    {\rev Fix some $C\in \mathbb{R}$ and $x_0\in \mathcal{X}$ and set 
    $$
    \phi_{t_0}^\vartheta(x)=\int_{x_0}^x 2\left(y-T_{t_0}^\vartheta(y)\right)\diff y+C \;\in\; \Phi(t_0,\vartheta).
    $$}
   The first derivative is given by  
	\[\nabla_t W_2^2(P_t,P_\vartheta)\vert_{t=t_0}=\int_{\mathcal{X}}\phi_{t_0}^\vartheta(x)\nabla_tp_t(x)\vert_{t=t_0}\diff x.\]
	 Integration by parts yields
	\[\int_{\mathcal{X}}\phi_{t_0}^\vartheta(x)\nabla_tp_t(x)\vert_{t=t_0}\diff x=\left[\phi_{t_0}^\vartheta(x)\nabla_tF_t(x)\vert_{t=t_0}\right]_a^b-\int_{\mathcal{X}}2(x-T_{t_0}^\vartheta(x))\nabla_tF_t(x)\vert_{t=t_0}\diff x,\]where the first term on the right hand side is to be understood as the limit
	\[\left[\phi_{t_0}^\vartheta(x)\nabla_tF_t(x)\vert_{t=t_0}\right]_a^b=\lim\limits_{x\to b}\phi_{t_0}^\vartheta(x)\nabla_tF_t(x)\vert_{t=t_0}- \lim\limits_{x\to a}\phi_{t_0}^\vartheta(x)\nabla_tF_t(x)\vert_{t=t_0}. \]

{\rev By  \eqref{ta2_1} and condition~\ref{ThmA5_3}, it holds 
\begin{align*}
\lim\limits_{x\to b}\phi_{t_0}^\vartheta(x)\nabla_tF_t(x)\vert_{t=t_0}&=\lim\limits_{x\to b}\left(C \nabla_tF_t(x)\vert_{t=t_0}+\nabla_tF_t(x)\vert_{t=t_0}\int_{x_0}^x 2\left(y-F_\vartheta^{-1}\left(F_{t_0}(y)\right)\right)  \diff y\right)\\
		&=\lim\limits_{x\to b}C \nabla_tF_t(x)\vert_{t=t_0}.
		\end{align*}
        By condition~\ref{ThmA5_1}, we can interchange order of derivation and integration, which leads to
	\[\lim\limits_{x\to b}C \nabla_tF_t(x)\vert_{t=t_0}=\lim\limits_{x\to b}
	C\int_a^x\nabla_t p_t(y)\vert_{t=t_0}\diff y=C\int_{\mathcal{X}}\nabla_t p_t(y)\vert_{t=t_0}\diff y =\nabla_t\left(\int_{\mathcal{X}} p_{t}(y)\diff y\right)\Big\vert_{t=t_0}= 0.\]Here we used that $p_{t}$ integrates to 1. This shows that $\lim\limits_{x\to b}\phi_{t_0}^\vartheta(x)\nabla_tF_t(x)\vert_{t=t_0}=0$. 
	By  the dominated convergence theorem, we obtain
	\[\lim\limits_{x\to a} \nabla_t F_t(x)\vert_{t=t_0}=\lim\limits_{x\to a} \int_{a}^x\nabla_t p_t(y)\vert_{t=t_0}\diff y =0\]using the integrable upper bound 
	\[ \mathbb{I}(a< y\le x)\nabla_t p_t(y)\vert_{t=t_0}\le \nabla_t p_t(y)\vert_{t=t_0}.\] 
	By this convergence and condition \ref{ThmA5_3}, it holds \[\lim\limits_{x\to a}\phi_{t_0}^\vartheta(x)\nabla_tF_t(x)\vert_{t=t_0}=\lim\limits_{x\to a}\nabla_tF_t(x)\vert_{t=t_0} \int_{x_0}^x 2\left(y-T_{t_0}^\vartheta(y)\right)\diff y+ C \nabla_tF_t(x)\vert_{t=t_0} =0.
	\]}
 Thus, we have derived that 
	\[
	\int_{\mathcal{X}}\phi_{t_0}^\vartheta(x)\nabla_tp_t(x)\vert_{t=t_0}\diff x=-\int_{\mathcal{X}}2(x-T_{t_0}^\vartheta(x))\nabla_tF_t(x)\vert_{t=t_0}\diff x.
	\]
	The derivative with respect to  $x$ is 
	\[
	\frac{\partial}{\partial x}T_{t_0}^\vartheta(x)=\frac{\partial}{\partial y}F_\vartheta^{-1}(y)\vert_{y=F_{t_0}(x)}\cdot p_{t_0}(x)=\frac{p_{t_0}(x)}{p_\vartheta(F_\vartheta^{-1}(F_{t_0}(x)))}=\frac{p_{t_0}(x)}{p_\vartheta(T_{t_0}^\vartheta(x))}.
	\]
	On the other hand, taking the derivative with respect to  $t$ at $t=t_0$ yields
	\[\nabla_tT_t^\vartheta(x)\vert_{t=t_0}=\frac{\nabla_t F_t(x)\vert_{t=t_0}}{p_\vartheta(T_{t_0}^\vartheta(x))}=\left(\frac{\partial}{\partial x}T_{t_0}^\vartheta(x)\right)\frac{\nabla_t F_t(x)\vert_{t=t_0}}{p_{t_0}(x)}.\] Recall that condition~\ref{ThmA5_2} enables the dominated convergence theorem, c.f.\ Theorem~\ref{grad_ws}, so  we are allowed to interchange integral and derivative. This leads to
	\begin{align*}&\nabla_t^2W_2^2(P_t,P_\vartheta)\\=&2\left(\int_{\mathcal{X}}(T_{t_0}^\vartheta(x)-x)\nabla^2_t F_t(x)\vert_{t=t_0} \diff x+\int_{\mathcal{X}} \left(\nabla_tT_t^\vartheta(x) \vert_{t=t_0}\right)\left(\nabla_t F_t(x)\vert_{t=t_0}\right)^\intercal \diff x\right)
		\\= &2\left(\int_{\mathcal{X}}(T_{t_0}^\vartheta(x)-x)\nabla^2_t F_t(x)\vert_{t=t_0} \diff x+\int_{\mathcal{X}}\left(\frac{\partial}{\partial x}T_t^\vartheta(x)\right)\frac{\nabla_t F_t(x)\vert_{t=t_0} \left(\nabla_t F_t(x)\vert_{t=t_0}\right)^\intercal}{p_t(x)}\diff x\right).\end{align*}
\end{proof}

The conditions stated in \cite[Theorem~1]{differentialwasser} only consider a compact set $\mathcal{X}$ and require that the derivative of the optimal transport map is bounded. These conditions imply the conditions of  \Cref{grad_ws,grad_ws_real}. In addition, note that the conditions of  \Cref{grad_ws,grad_ws_real} are just sufficient and not necessary. 
\begin{example}[Laplace distribution]
	Consider the location-scale family of Laplace distributions
	\[\mathcal{M}=\{P_{(a,b)}=\text{Lap}(a,b):(a,b)\in \Theta= \mathbb{R}\times (0,\infty)\}.\]The density is not differentiable for all parameter values, but the squared 2-Wasserstein distance between two elements is given as
	\[W_2^2(P_{(a,b)},P_{(c,d)})=(a-c)^2+2(b-d)^2,\quad (a,b),(c,d)\in \Theta\] and is differentiable for all parameter values.
\end{example} To illustrate the fact that the conditions of \Cref{grad_ws,grad_ws_real} hold for typical models,
we give two examples.
\begin{example}[Pareto distribution]\label{ex:pare}
	Consider the parametric shape family of Pareto distributions
	\[\mathcal{M}=\{P_t=\mathrm{Pareto}(t,1):t\in \Theta=(2,\infty)\}\]on $\mathcal{X}=(1,\infty)$. For any $t,\vartheta \in \Theta$ let $p_t$ denote the density of $P_t$,
	$F_t$ the distribution function of $P_t$, and $T_t^\vartheta$ the optimal transport map, which transports $P_t$ to $P_\vartheta$ and {\rev $\Phi(t,\vartheta)$ the set of solutions} to the dual formulation of the Wasserstein distance.
	Then, it holds
	\begin{align*}
		&p_t(x)=\frac{t}{x^{t+1}},\; \frac{\partial}{\partial t}p_t(x)=-\frac{\log(x)t-1}{x^{t+1}},\\
		&F_t(x)=1-x^{-t},\; \frac{\partial}{\partial t}F_t(x)=\frac{\log(x)}{x^t},\; \frac{\partial^2}{\partial t^2}F_t(x)=\frac{\log(x)}{x^t}\\
		&T^\vartheta_t(x)=F_\vartheta^{-1}(F_t(x))=x^{t/\vartheta},\;  \frac{\partial}{\partial t}T_t^\vartheta(x)=\frac{\log(x)x^{t/\vartheta}}{\vartheta}, \\
        &{\rev \Phi(t,\vartheta)=\left\{x\mapsto C+x^2+\frac{2\vartheta}{t+\vartheta}\left(1-x^{1+t/\vartheta}\right):C\in \mathbb{R} \right\}}.
	\end{align*} It can be seen that the conditions of \Cref{grad_ws,grad_ws_real} are fulfilled {\rev for $\Phi_0(t,\vartheta)=\{x\mapsto x^2+\frac{2\vartheta}{t+\vartheta}\left(1-x^{1+t/\vartheta}\right) \}$}, since the logarithm grows slower than $x^t$ for all $t>0$ and\[\int_{1}^{\infty}\frac{\log(x)}{x^a}\diff x<\infty\]for $a>1$.
\end{example}

{\rev
\begin{example}[Beta distribution]\label{ex:beta}
    Consider the parametric shape family of Beta distributions
	\[
	\mathcal{M}=\{P_t=\mathrm{Beta}(t,1):t\in \Theta=(0,\infty)\}
	\]
	on $\mathcal{X}=(0,1)$. For any $t,\vartheta \in \Theta$, by $p_t$ and $F_t$ denote the density the distribution function of $P_t$, respectively. Let $T_t^\vartheta$  be the optimal transport map, which transports $P_t$ to $P_\vartheta$ and {\rev $\Phi(t,\vartheta)$ the set of solutions} to the dual formulation of the Wasserstein distance. Then, it holds
	\begin{align*}
		&p_t(x)=tx^{t-1},\; \frac{\partial}{\partial t}p_t(x)=x^{t-1}(1+t\log(x)),\\
		&F_t(x)=x^{t},\; \frac{\partial}{\partial t}F_t(x)=x^t\log(x),\; \frac{\partial^2}{\partial t^2}F_t(x)=x^t\log(x)^2, \\
		&T^\vartheta_t(x)=F_\vartheta^{-1}(F_t(x))=x^{t/\vartheta},\; \frac{\partial}{\partial t}T_t^\vartheta(x)=\frac{\log(x)x^{t/\vartheta}}{\vartheta}, \\ 
		&\Phi(t,\vartheta)=\{x\mapsto C+x^2-2x^{(t+\vartheta)/\vartheta}:C\in \mathbb{R}\}.
	\end{align*} 
	Thus, with the choice $\Phi_0(t,\vartheta)=\{x\mapsto x^2-2x^{(t+\vartheta)/\vartheta}\}$,  the conditions in \Cref{grad_ws,grad_ws_real} are fulfilled.
\end{example}
}
\section{Technical results}\label[supplement]{technical}
This supplement includes two technical results, which are consequences of the Bernstein--von Mises (BvM) theorem. 
The first result shows that the mass of the set $C_n^c=\left\{h\in H_n:\norm{h}\ge M_n\right\}$ with respect to  $P_{h_n\vert {\boldsymbol X}}$, where $(M_n)_{n\in \mathbb{N}}$ is any diverging sequence of positive numbers,  is asymptotically negligible. As a consequence, the posterior distribution asymptotically concentrates on sets of radius of the order $n^{-1/2}$.
\begin{proposition}\label[proposition]{weak_conv_prop1}
	Assume that $X_1,\dots,X_n\stackrel{\text{i.i.d.}}{\sim}P_{\vartheta_{0}}$ and that for any $\varepsilon>0$ there exists a sequence of tests $\phi_n:\mathcal{X}^n\to [0,1]$ such that
	\[\mathbb{E}_{P^n_{\vartheta_{0}}}\left[\phi_n({\boldsymbol X})\right]\to 0\qquad \text{and}\qquad \sup_{\vartheta\in \Theta:\norm{\vartheta-\vartheta_0}\ge \varepsilon}\mathbb{E}_{P_\vartheta^n}\left[1-\phi_n({\boldsymbol X})\right]\to 0,\]as $n\to \infty$.  Let $g_n: \mathbb{R}^d\to [0,\infty)$ be a sequence of Borel measurable functions and assume: \begin{enumerate}[label=(\roman*)]		\item \label{PropB1_1}There exist $N\in \mathbb{N},c_1>0$ and $0<c_2<1$ such that for all $n\ge N$
		\[ \int_{\Theta}g_n\left(n^{1/2}(\vartheta-\vartheta_{0})\right)\diff\Pi(\vartheta)\lesssim \exp\left(c_1n^{c_2}\right). \]
		\item \label{PropB1_2} There exist constants $c_3>0$ and $0<c_4<2$ such that for all $h\in \mathbb{R}^d $ \[g_n(h)\lesssim \exp\left(c_3 \norm{h}^{c_4}\right).\]
	\end{enumerate}
	Then, for any sequence $(M_n)_{n\in \mathbb{N}}$  of positive numbers such that $M_n\to \infty$, as $n\to \infty$, it holds
	\[\int_{C_n^c}g_n(h)\diff P_{h_n\vert {\boldsymbol X}}(h) \stackrel{P_{\vartheta_{0}}^n}{\to} 0, \]
	where $C_n:=\{h\in H_n:\norm{h}<  M_n\}.$ 
\end{proposition}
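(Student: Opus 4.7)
The plan is to reformulate the integral via Bayes' formula and bound its numerator and denominator separately. Writing $L_n(\vartheta):=\prod_{i=1}^n p_\vartheta(X_i)/p_{\vartheta_0}(X_i)$ and
\[
N_n:=\int_{\|\vartheta-\vartheta_0\|\ge M_n/\sqrt{n}} g_n\!\bigl(\sqrt{n}(\vartheta-\vartheta_0)\bigr)\,L_n(\vartheta)\,\diff\Pi(\vartheta),\qquad D_n:=\int L_n\,\diff\Pi,
\]
the target quantity equals $N_n/D_n$. The BvM-type regularity implicitly present in the paper's framework (differentiability in quadratic mean and positivity and continuity of the prior density at $\vartheta_0$) yields the standard lower bound $D_n\gtrsim n^{-d/2}$ with $P^n_{\vartheta_0}$-probability tending to $1$, which is the same bound used in the proof of the Bernstein--von Mises theorem.

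I then split $N_n=N_n^{\text{far}}+N_n^{\text{near}}$ according to the regions $\{\|\vartheta-\vartheta_0\|\ge\varepsilon\}$ and $\{M_n/\sqrt{n}\le\|\vartheta-\vartheta_0\|<\varepsilon\}$, for a fixed small $\varepsilon>0$. For the far piece, Le Cam's classical refinement upgrades the given consistent tests to tests $\phi_n$ with $E_{P^n_{\vartheta_0}}[\phi_n]\le e^{-cn}$ and $\sup_{\|\vartheta-\vartheta_0\|\ge\varepsilon}E_{P^n_\vartheta}[1-\phi_n]\le e^{-cn}$ for some $c>0$. Using Fubini together with the identity $E_{P^n_{\vartheta_0}}[(1-\phi_n)L_n(\vartheta)]=E_{P^n_\vartheta}[1-\phi_n]$ yields
\[
E_{P^n_{\vartheta_0}}\!\bigl[(1-\phi_n)\,N_n^{\text{far}}\bigr]\le e^{-cn}\int g_n\!\bigl(\sqrt{n}(\vartheta-\vartheta_0)\bigr)\diff\Pi(\vartheta)\lesssim e^{-cn+c_1 n^{c_2}}\to 0,
\]
exploiting assumption~\ref{PropB1_1} and $c_2<1$. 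Combined with $P^n_{\vartheta_0}(\phi_n>1/2)\le 2\,E_{P^n_{\vartheta_0}}[\phi_n]\to 0$ and Markov's inequality, this delivers $N_n^{\text{far}}/D_n\to 0$ in $P^n_{\vartheta_0}$-probability at an exponential rate.

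For the near piece, I would substitute the LAN expansion
\(
\log L_n\!\bigl(\vartheta_0+h/\sqrt{n}\bigr)=h^\top\Delta_{n,\vartheta_0}-\tfrac{1}{2} h^\top I_{\vartheta_0}h+o_{P^n_{\vartheta_0}}(1),
\)
valid uniformly on bounded $\|h\|$ and extended into the slowly growing shell via the standard tail refinements from the BvM proof. Together with assumption~\ref{PropB1_2}, the integrand is dominated, up to $P^n_{\vartheta_0}$-stochastically bounded multiplicative factors, by $e^{c_3\|h\|^{c_4}-\tfrac{1}{2} h^\top I_{\vartheta_0}h}$; since $c_4<2$ the Gaussian curvature dominates and this function is integrable on $\mathbb{R}^d$. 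Dominated convergence then gives $\int_{\|h\|\ge M_n}e^{c_3\|h\|^{c_4}-\tfrac{1}{2} h^\top I_{\vartheta_0}h}\diff h\to 0$, whence $N_n^{\text{near}}=o_{P^n_{\vartheta_0}}(n^{-d/2})$; combining with $D_n\gtrsim n^{-d/2}$ closes the argument.

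The principal obstacle is the moderate shell $\{M_n/\sqrt{n}\le\|\vartheta-\vartheta_0\|<\varepsilon\}$: no single fixed test controls it, so one must invoke the uniform LAN expansion underlying the BvM regularity. The two hypotheses interact in a finely tuned way: assumption~\ref{PropB1_1} with $c_2<1$ is exactly what the exponential test rate is able to beat on the far region, while assumption~\ref{PropB1_2} with $c_4<2$ is precisely the threshold below which Gaussian quadratic decay dominates the integrand's growth, keeping the near-region integral finite.
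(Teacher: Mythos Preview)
The paper does not actually prove this proposition: it states that ``[t]he stated two propositions can be proven in the a similar way as Theorem~10.8 from \cite{v}'' and omits the details. Your proposal is precisely that argument---Bayes' formula, the $n^{-d/2}$ lower bound on the denominator, exponential tests for the far region via Le Cam's refinement, and LAN-based control of the intermediate shell---and you correctly identify why the thresholds $c_2<1$ and $c_4<2$ are exactly what the two regions require, so your approach matches the paper's indicated route.

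One remark on the near piece: your phrasing ``extended into the slowly growing shell via the standard tail refinements from the BvM proof'' glosses over a genuine technicality, since the LAN remainder $o_{P^n_{\vartheta_0}}(1)$ is uniform only over compact $h$-sets. Van der Vaart's actual device there is not a direct LAN substitution but rather local tests constructed from Hellinger bounds (his Lemma~10.3), which give power $e^{-c\|h\|^2}$ uniformly over $M_n\le\|h\|\le\varepsilon\sqrt{n}$; this is what makes the Gaussian-dominated bound rigorous without appealing to LAN on unbounded shells. Either route closes the argument, but the test-based one is cleaner and avoids the uniformity issue you flag as the ``principal obstacle.''
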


The second result yields under very general conditions the weak convergence of the stochastic process
 \[
 Z_n(t)=\int_{H_n}\ell(t,h)\diff P_{h_n\vert {\boldsymbol X}}(h)
 \]
 uniformly over any compact set.
 \begin{proposition} \label[proposition]{weak_conv_prop2}
	Let the assumption of the BvM theorem (\Cref{B.v.M:}) hold and let $\ell:\mathbb{R}^d\times \mathbb{R}^d\to [0,\infty)$ be a jointly measurable function. Suppose that\begin{enumerate}[label=(\roman*)]
		\item  For any compact sets $K,C\subset \mathbb{R}^d$
		\[\sup_{t\in K,h\in C}\ell(t,h)<\infty\]
		and for any $y\in \mathbb{R}^d$\[\int_{\mathbb{R}^d}\sup_{t\in K}\ell(t,h)\diff\mathcal{N}(y,I^{-1}_{\vartheta_{0}})(h)<\infty.\]
		\item The function $g:\mathbb{R}^d\to [0,\infty),h\mapsto \sup_{t\in K}\ell(t,h)$ satisfies the conditions from Proposition \ref{weak_conv_prop1}.
	\end{enumerate}
	Define the stochastic processes 
	\[
		Z_n(t):=\int_{H_n}\ell(t,h)\diff P_{h_n\vert {\boldsymbol X}}(h),\quad\text{ and }\quad
		Z(t):=\int_{\mathbb{R}^d}\ell(t,h)\diff \mathcal{N}\left(Y,I_{\vartheta_0}^{-1}\right)(h),
	\]
	where $Y\sim \mathcal{N}\left(0,I_{\vartheta_0}^{-1}\right)$ is a normal distributed random variable. Then, it holds that $Z_n\stackrel{D}{\to}_{P_{\vartheta_{0}}^n} Z$, as $n\to \infty$, in $\ell^\infty(K)$ for any compact set $K\subset\mathbb{R}^d$. 
\end{proposition}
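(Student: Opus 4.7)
My strategy is to insert the Gaussian proxy
\begin{equation*}
\tilde Z_n(t) := \int_{H_n}\ell(t,h)\,\diff\mathcal{N}_d\!\left(\Delta_{n,\vartheta_0}({\boldsymbol X}),\,I_{\vartheta_0}^{-1}\right)(h),
\end{equation*}
and argue separately that $\norm{Z_n-\tilde Z_n}_{\ell^\infty(K)}\stackrel{P^n_{\vartheta_0}}{\to} 0$ (Step A) and $\tilde Z_n\stackrel{D}{\to}_{P^n_{\vartheta_0}} Z$ in $\ell^\infty(K)$ (Step B), after which a Slutsky-type argument in $\ell^\infty(K)$ concludes. The BvM theorem supplies Step A through $\norm{P_{h_n\vert {\boldsymbol X}} - \mathcal{N}_d(\Delta_{n,\vartheta_0}({\boldsymbol X}),I_{\vartheta_0}^{-1})}_{\TV}\stackrel{P^n_{\vartheta_0}}{\to}0$, while Step B leverages the classical CLT applied to the i.i.d.\ score vectors $\ell'_{\vartheta_0}(X_i)$, yielding $\Delta_{n,\vartheta_0}({\boldsymbol X})\stackrel{D}{\to}_{P^n_{\vartheta_0}}Y\sim \mathcal{N}_d(0,I_{\vartheta_0}^{-1})$.

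For Step A, I would truncate with a deterministic $M_n\to\infty$, set $C_n=\{h\in\mathbb{R}^d:\|h\|<M_n\}$ and $g(h):=\sup_{t\in K}\ell(t,h)$, and decompose the integrals over $H_n\cap C_n$ versus $H_n\cap C_n^c$. The bounded-integrand TV bound on the first piece gives
\begin{equation*}
\sup_{t\in K}\left|\int_{H_n\cap C_n}\ell(t,h)\,\diff(P_{h_n\vert {\boldsymbol X}}-\mathcal{N}_d(\Delta_{n,\vartheta_0},I_{\vartheta_0}^{-1}))(h)\right| \le \sup_{h\in C_n}g(h)\cdot \norm{P_{h_n\vert {\boldsymbol X}}-\mathcal{N}_d(\Delta_{n,\vartheta_0},I_{\vartheta_0}^{-1})}_{\TV}.
\end{equation*}
Since the assumption via Proposition~\ref{weak_conv_prop1} forces $g(h)\lesssim\exp(c_3\|h\|^{c_4})$ with $c_4<2$, I would pick $M_n\to\infty$ slowly enough that $\exp(c_3M_n^{c_4})$ times an in-probability null sequence is still in-probability null, using the elementary fact that any $o_{P}(1)$ sequence can be amplified by some diverging deterministic factor while remaining $o_P(1)$. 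The tail contributions are handled by $\int_{C_n^c}g\,\diff P_{h_n\vert {\boldsymbol X}}\stackrel{P^n_{\vartheta_0}}{\to}0$, which is exactly Proposition~\ref{weak_conv_prop1}, together with $\int_{C_n^c}g\,\diff\mathcal{N}_d(\Delta_{n,\vartheta_0},I_{\vartheta_0}^{-1})\stackrel{P^n_{\vartheta_0}}{\to}0$, following from Gaussian quadratic tails dominating the sub-quadratic growth of $g$ uniformly over the tight sequence $\Delta_{n,\vartheta_0}$. Any mass of $H_n^c$ under either measure is negligible, because $\vartheta_0\in\Theta^\circ$ forces $H_n\supseteq B_{n^{1/2}\varepsilon_0}(0)$ for some $\varepsilon_0>0$.

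For Step B, I would invoke the continuous mapping theorem for
\begin{equation*}
\Phi:\mathbb{R}^d\to\ell^\infty(K),\qquad \Phi(y)(t):=\int_{\mathbb{R}^d}\ell(t,h)\,\diff\mathcal{N}_d(y,I_{\vartheta_0}^{-1})(h),
\end{equation*}
so that $\tilde Z_n=\Phi(\Delta_{n,\vartheta_0})$ modulo the $H_n^c$-tail controlled above, and $Z=\Phi(Y)$. To verify continuity of $\Phi$ in the sup-norm topology of $\ell^\infty(K)$, denote the Gaussian density by $\phi_y$; for $y_n\to y$ in $\mathbb{R}^d$, Fubini and the triangle inequality yield the $t$-free majorant $\norm{\Phi(y_n)-\Phi(y)}_{\ell^\infty(K)}\le \int g(h)\,|\phi_{y_n}(h)-\phi_y(h)|\,\diff h$, to which dominated convergence applies with the envelope $g(h)\cdot\sup_{\norm{y'}\le R}\phi_{y'}(h)$ (integrable because sub-Gaussian decay of the envelope beats the sub-quadratic growth of $g$). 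The CLT for $\Delta_{n,\vartheta_0}$ combined with this continuity then delivers the claimed weak convergence.

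The main technical obstacle is the coupling in Step A between the TV-convergence and an unbounded integrand: the TV bound is informative only for bounded integrands, whereas $g$ may grow exponentially (subject to $c_4<2$), so the delicate step is to choose a deterministic divergent sequence $M_n$ whose induced weight $\exp(c_3M_n^{c_4})$ is annihilated by the in-probability rate at which the TV distance vanishes. All other estimates reduce to routine dominated-convergence arguments backed by the tightness of $\Delta_{n,\vartheta_0}$ and the sub-quadratic growth bound on $g$.
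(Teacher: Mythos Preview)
Your proposal is correct and follows the same truncation--TV--continuous-mapping strategy that the paper uses (the paper in fact omits a direct proof of this proposition, deferring to van der Vaart's Theorem~10.8, but gives the argument in full for the more general Lemma~\ref{weak_conv_prop3}, of which this is the special case $\ell_n\equiv\ell$). The only organisational difference is that the paper first fixes a truncation level $M$, proves $Z_{n,M}\stackrel{D}{\to}W_M$ for each $M$ via the TV bound and the extended continuous mapping theorem, then shows $W_M\to Z$ as $M\to\infty$ and extracts a diagonal sequence $M_n\to\infty$ with $Z_{n,M_n}\stackrel{D}{\to}Z$; you instead choose $M_n\to\infty$ directly and balance $\exp(c_3 M_n^{c_4})$ against the in-probability TV rate. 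Both routes are standard and equivalent; the paper's fixed-$M$-then-diagonal organisation avoids the explicit rate-balancing step you flag as the ``main technical obstacle'', at the cost of an extra layer of approximation.
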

The stated two propositions can be proven in the a similar way as Theorem~10.8 in \cite{v}, but hold for functions even with exponential growth and a proof of them is thus omitted. They are crucial for the proofs of our asymptotic results, such as  \Cref{consistency,main_thm1}.

\section{Proofs and technical derivations}
In this supplement, we state the proofs of the results from the main text.
\subsection{Proofs for \Cref{sec1_sub3}}

\begin{proof}[Proof of Proposition \ref{unif_well_implied}]
    In the first case  \ref{i:uwi:i},  it holds for $a = \min(1,2^{(1-p)}) \in (0,1]$,
   {\rev \[\ell(t,\vartheta_{0})\le a^{-1}(\ell(t,\vartheta)+\ell(\vartheta_{0},\vartheta)) \]}
by the triangle inequality of $\rho$ and the fact that for $b_1,b_2\ge 0$ and $p >0$, $(b_1+b_2)^p\le 2^{\max\{p-1,0\}}(b_1^p+b_2^p)$. By rewriting this inequality we find that
\[\ell(t,\vartheta)\ge a \ell(t,\vartheta_{0})-\ell(\vartheta_{0},\vartheta).\]
As a consequence,
\[\inf_{t,\vartheta:\norm{t-\vartheta_0}>M,\norm{\vartheta-\vartheta_0}\le \delta}\ell(t,\vartheta)\ge  a\inf_{t:\norm{t-\vartheta_0}>M}\ell(t,\vartheta_{0})-\sup_{\vartheta:\norm{\vartheta-\vartheta_0}\le \delta}\ell(\vartheta_{0},\vartheta).\]By assumption the map $\vartheta\mapsto\ell(\vartheta_{0},\vartheta)$ is continuous at $\vartheta_{0}$ and $\ell(\vartheta_{0},\vartheta_0)=0$. 
Hence, we can choose $\delta>0$ small enough such that \[a\inf_{t:\norm{t-\vartheta_0}>M}\ell(t,\vartheta_{0})-\sup_{\vartheta:\norm{\vartheta-\vartheta_0}\le \delta}\ell(\vartheta_{0},\vartheta)>0.\]

 In the second case  \ref{i:uwi:ii}, we will show that for any $M>0$ the choice $\delta=M/2$ works. Consider $t,\vartheta\in \Theta$ such that $\norm{t-\vartheta_0}>M$ and $\norm{\vartheta-\vartheta_0}\le M/2$. We {\rev obtain by the reverse triangle inequality} that
\[\norm{t-\vartheta}=\norm{(t-\vartheta_0)-(\vartheta -\vartheta_0)}\ge \norm{t-\vartheta_0}-\norm{\vartheta-\vartheta_0}\ge M-\frac{M}{2}=\frac{M}{2} .\]By monotonicity of $g$ we find that
\[\inf_{t,\vartheta:\norm{t-\vartheta_0}>M,\norm{\vartheta-\vartheta_0}\le \delta}\ell(t,\vartheta)\ge g\left(\frac{M}{2}\right)>0.\]
\end{proof}
 
\begin{proof}[Proof of Proposition \ref{consistency}] We begin by showing that under both conditions \ref{i:sep}  and \ref{i:conv}, for any $t\in \Theta$ there exists $\delta>0$ such that
\begin{equation}
  \int_{B_\delta(\vartheta_{0})}\ell(t,\vartheta)\diff P_{\theta\vert {\boldsymbol X}}(\vartheta) \stackrel{P^n_{\vartheta_0}} {\to} \ell(t,\vartheta).\label{prop4_post_exp}
\end{equation}Define $\tilde{\theta}_n=n^{-1/2}\Delta_{n,\vartheta_0}+\vartheta_{0}$, where $\Delta_{n,\vartheta_0}=n^{-1/2}\sum_{i=1}^nI^{-1}_{\vartheta_{0}}\ell_{\vartheta_{0}}'(X_i)$. The law of large numbers implies that $\tilde{\theta}_n\stackrel{P^n_{\vartheta_{0}}}{\longrightarrow}\vartheta_{0}$. Moreover, it holds by the BvM theorem that
	\[\norm{P_{\theta\vert {\boldsymbol X}}-\mathcal{N}(\tilde{\theta}_n,n^{-1}I^{-1}_{\vartheta_{0}})}_{\TV}=\norm{P_{h_n\vert {\boldsymbol X}}-\mathcal{N}(\Delta_{n,\vartheta_0},I^{-1}_{\vartheta_{0}})}_{\TV}=o_{P^n_{\vartheta_0}}(1).\]The first equality can be seen by applying the formula of the total variation distance in terms of densities and using a change of variables. We have for any $\delta>0$
	\begin{align*}\int_{B_\delta(\vartheta_{0})}\ell(t,\vartheta)\diff P_{\theta\vert {\boldsymbol X}}(\vartheta)	=\int_{B_\delta(\vartheta_{0})}\ell(t,\vartheta)\diff \mathcal{N}(\tilde{\theta}_n,n^{-1}I^{-1}_{\vartheta_{0}})(\vartheta)+R_n(t)
		,
	\end{align*}where $R_n(t)=\int_{B_\delta(\vartheta_{0})}\ell(t,\vartheta)\diff P_{\theta\vert {\boldsymbol X}}(\vartheta)-\int_{B_\delta(\vartheta_{0})}\ell(t,\vartheta)\diff \mathcal{N}(\tilde{\theta}_n,n^{-1}I^{-1}_{\vartheta_{0}})(\vartheta)$. By continuity of the loss function $\ell$ and choosing $\delta>0$ sufficiently small the BvM theorem yields\[\abs{R_n(t)}\lesssim\norm{P_{\theta\vert {\boldsymbol X}}-\mathcal{N}(\tilde{\theta}_n,n^{-1}I^{-1}_{\vartheta_{0}})}_{\TV} =o_{P^n_{\vartheta_0}}(1).\]It is left to show that
	\[\int_{B_\delta(\vartheta_{0})}\ell(t,\vartheta)\diff \mathcal{N}(\tilde{\theta}_n,n^{-1}I^{-1}_{\vartheta_{0}})(\vartheta)\stackrel{P^n_{\vartheta_{0}}}{\longrightarrow}\ell(t,\vartheta_{0}).\]For $\delta>0$ define $\ell_\delta(\vartheta):=\mathbb{I}(\norm{\vartheta-\vartheta_{0}}\le \delta)\ell(t,\vartheta)$. For some constant $c_\delta>0$ it holds that \[\sup_{\vartheta\in \Theta}\ell_\delta(\vartheta)\le c_\delta.\] Define the sequence of maps
	\[\mathcal{I}_n:\mathbb{R}^d\to [0,\infty),\;\mu \mapsto \int_{\mathbb{R}^d}\ell_\delta(\vartheta)\diff \mathcal{N}(\mu,n^{-1}I^{-1}_{\vartheta_{0}})(\vartheta).\]Let $(\mu_n)_{n \in \mathbb{N}}\subseteq \mathbb{R}^d$ be a sequence, which converges to some $\mu\in B_\delta(\vartheta_{0})^\circ$. By the substitution $z=n^{1/2}(\vartheta-\mu_n)$ we obtain
	\[\mathcal{I}_n(\mu_n)=\int_{ \mathbb{R}^d}\ell_\delta(\mu_n+n^{-1/2}z)\diff \mathcal{N}(0,I^{-1}_{\vartheta_{0}})(z).\]From the dominated convergence theorem and the continuity of $\ell$ it follows that 
	\[\lim\limits_{n\to \infty}\mathcal{I}_n(\mu_n)=\int_{ \mathbb{R}^d}\ell_\delta(\mu)\diff \mathcal{N}(0,I^{-1}_{\vartheta_{0}})(z)=\ell_\delta(\mu)=\ell(t,\mu).\]The extended continuous mapping theorem \cite[Theorem~1.11.1]{vanwell} yields
	\[\int_{B_\delta(\vartheta_{0})}\ell(t,\vartheta)\diff \mathcal{N}(\tilde{\theta}_n,n^{-1}I^{-1}_{\vartheta_{0}})(\vartheta)=\mathcal{I}_n(\tilde{\theta}_n)\stackrel{P^n_{\vartheta_{0}}}{\longrightarrow}\ell_\delta(\vartheta_{0})=\ell(t,\vartheta_{0}).\]

Let condition \ref{i:sep} hold. As the Bayes estimator minimizes the posterior risk for almost every observation, the following upper bound is valid for any $M>0$:
	\begin{align*}
		\prob{\norm{\hat{\theta}_n-\vartheta_0}>M}&\le \prob{\inf_{t\in \Theta:\norm{t-\vartheta_0}>M}\int_{\Theta}\ell(t,\vartheta)-\ell(\vartheta_0,\vartheta)\diff P_{\theta\vert {\boldsymbol X}}(\vartheta)\le 0}. 
	\end{align*}We begin by showing that
	\[\int_{\Theta}\ell(\vartheta_0,\vartheta)\diff P_{\theta\vert {\boldsymbol X}}(\vartheta)=o_{P_{\vartheta_0}^n}(1).\]
	For any $\delta>0$ it holds that
	\begin{align*}
		\int_\Theta\ell(\vartheta_0,\vartheta)\diff P_{\theta\vert {\boldsymbol X}}(\vartheta)&= \int_{\vartheta:\norm{\vartheta-\vartheta_0}> \delta}\ell(\vartheta_0,\vartheta)\diff P_{\theta\vert {\boldsymbol X}}(\vartheta)+\int_{\vartheta:\norm{\vartheta-\vartheta_0}\le \delta}\ell(\vartheta_0,\vartheta)\diff P_{\theta\vert {\boldsymbol X}}(\vartheta).
	\end{align*}Choosing $\delta>0$ small enough, we have by \eqref{prop4_post_exp} 
 \[\int_{\vartheta:\norm{\vartheta-\vartheta_0}\le \delta}\ell(\vartheta_0,\vartheta)\diff P_{\theta\vert {\boldsymbol X}}(\vartheta) \stackrel{P^n_{\vartheta_0}}{\to}\ell(\vartheta_0,\vartheta_0)=0.\] 
 	By the assumptions we can find $c_2>0$ and $0<b<2$ such that
	\[\ell(\vartheta_{0},\vartheta)\lesssim e^{c_2\norm{\vartheta}^b+c_2\norm{\vartheta_{0}}^b}\lesssim e^{c_2\norm{\vartheta}^b},\quad \forall \vartheta \in \Theta.\]As a consequence, there exists $c_3>0$ such that
	\[\ell(\vartheta_0,\vartheta_0+n^{-1/2}h)\lesssim e^{c_3\norm{n^{-1/2}h}^b}\le e^{c_3\norm{h}^b}.\]Proposition \ref{weak_conv_prop1} yields that
	\begin{align*}
		\int_{\vartheta:\norm{\vartheta-\vartheta_0}> \delta}\ell(\vartheta_0,\vartheta)\diff P_{\theta\vert {\boldsymbol X}}(\vartheta)=\int_{h:\norm{h}> \delta n^{1/2}}\ell(\vartheta_0,\vartheta_0+n^{-1/2}h)\diff P_{h_n\vert {\boldsymbol X}}(h)=o_{P^n_{\vartheta_0}}(1).
	\end{align*}
	Define $\eta:=\inf_{t,\vartheta:\norm{t-\vartheta_0}>M,\norm{\vartheta-\vartheta_0}\le \delta}\ell(t,\vartheta)$. By Assumption \ref{mon}, $\eta$ is strictly positive. We obtain for any sufficiently small $\delta>0$
	\begin{align*}
		&\inf_{t\in \Theta:\norm{t-\vartheta_0}>M}\int_{\Theta}\ell(t,\vartheta)-\ell(\vartheta_0,\vartheta)\diff P_{\theta\vert {\boldsymbol X}}(\vartheta)
		\\&\ge\inf_{t\in \Theta:\norm{t-\vartheta_0}>M}\int_{\vartheta:\norm{\vartheta-\vartheta_{0}}\le \delta}\ell(t,\vartheta)\diff P_{\theta\vert {\boldsymbol X}}(\vartheta)-\int_{\Theta}\ell(\vartheta_0,\vartheta)\diff P_{\theta\vert {\boldsymbol X}}(\vartheta)\\
		&\ge\eta P_{\theta\vert {\boldsymbol X}}\left(\{\vartheta:\norm{\vartheta-\vartheta_{0}}\le \delta\}\right)-o_{P^n_{\vartheta_{0}}}(1).
	\end{align*} By Proposition \ref{weak_conv_prop1}, for $g_n(h)=1$ and Slutsky's lemma it holds that
	\[P_{\theta\vert {\boldsymbol X}}\left(\{\vartheta:\norm{\vartheta-\vartheta_{0}}\le \delta\}\right)=P_{h_n\vert{\boldsymbol X}}\left(\{h:\norm{h}\le \delta n^{1/2}\}\right)\stackrel{P^n_{\vartheta_{0}}}{\to}1,\quad n\to \infty.\]
	Thus, the  probability that $\norm{\hat{\theta}_n-\vartheta_0}>M$ holds converges to zero for any $M>0$, as $\inf_{t\in \Theta:\norm{t-\vartheta_0}>M}\int_{\Theta}\left(\ell(t,\vartheta)-\ell(\vartheta_0,\vartheta)\right)\diff P_{\theta\vert {\boldsymbol X}}(\vartheta)$ is bounded from below by a random variable, which converges weakly to a strictly positive random variable. 
	
	  This shows the result under condition \ref{i:sep}. Consider now condition \ref{i:conv}.
 The fact that $t\mapsto \ell(t,\vartheta_0)$ is convex and $\ell$ is a loss function implies that $t\mapsto \ell(t,\vartheta_0)$ has a well-separated minimum at $\vartheta_0$, i.e.\ for any $M>0$
 \[\inf_{t\in \Theta:\norm{t-\vartheta_0}>M}\ell(t,\vartheta_0)>0.\]If we can show that for any fixed $t\in \Theta$ we have
	\begin{equation}Z_n(t)\stackrel{P^n_{\vartheta_{0}}}{\longrightarrow}\ell(t,\vartheta_{0}) \label{t.s.},\end{equation}where $Z_n(t)=\int_{\Theta}\ell(t,\vartheta)\diff P_{\theta\vert {\boldsymbol X}}(\vartheta)$, by convexity this pointwise convergence can be strengthened to uniform convergence on compacta in probability; see \citet[Theorem~II.1]{Andersen1982} or \citet[Section~1]{Pollard1991}. For any $\delta>0$ it holds that
	\[\int_{B_\delta(\vartheta_{0})^C}\ell(t,\vartheta)\diff P_{\theta\vert {\boldsymbol X}}(\vartheta)=\int_{h:\norm{h}>\delta \sqrt{n}}\ell(t,\vartheta_0+n^{-1/2}h)\diff P_{H_n\vert {\boldsymbol X}}(h)=o_{P^n_{\vartheta_{0}}}(1),\]by Proposition \ref{weak_conv_prop1}, as for some constants $c>0$ and $0<b<2$
	\[ \ell(t,\vartheta_0+n^{-1/2}h)\lesssim e^{c\norm{t}^b+c\norm{n^{-1/2}h}^b} \le e^{c\norm{t}^b+c\norm{h}^b}\lesssim e^{c\norm{h}^b},\]for all $n\in \mathbb{N}$. 
 Hence,
	\begin{align*}Z_n(t)&=\int_{B_\delta(\vartheta_{0})}\ell(t,\vartheta)\diff P_{\theta\vert {\boldsymbol X}}(\vartheta)+o_{P^n_{\vartheta_{0}}}(1).
	\end{align*}Combined with \eqref{prop4_post_exp}, we obtain $Z_n(t)\stackrel{P^n_{\vartheta_0}}{\to}\ell(t,\vartheta_0)$.
 For an arbitrary $M>0$ let $\alpha= M/(M+\norm{\hat{\theta}_n-\vartheta_{0}})$ and $T_n=\alpha\hat{\theta}_n+(1-\alpha)\vartheta_{0}$. By the convexity of $\ell(\cdot,\vartheta)$ and the definition of $\hat{\theta}_n$ it holds that
	\[Z_n(T_n)\le\alpha Z_n(\hat{\theta}_n)+(1-\alpha)Z_n(\vartheta_{0})\le Z_n(\vartheta_{0}).\]Consistency of $T_n$ follows, as  $\norm{T_n-\vartheta_{0}}< M$, and thus for any $\varepsilon>0$
	\begin{align*}
		\prob{\norm{T_n-\vartheta_{0}}>\varepsilon}&\le \prob{\ell(T_n,\vartheta_{0})\ge \inf_{t\in \Theta:\norm{t-\vartheta_{0}}>\varepsilon}\ell(t,\vartheta_{0})}\\
		&\le \prob{\ell(T_n,\vartheta_{0})+Z_n(\vartheta_{0})-Z_n(T_n)-\ell(\vartheta_{0},\vartheta_{0})\ge \inf_{t\in \Theta:\norm{t-\vartheta_{0}}>\varepsilon}\ell(t,\vartheta_{0})}\\
		&\le  \prob { 2\sup_{t\in \Theta:\norm{t-\vartheta_{0}}\le M}\abs{Z_n(t)-\ell(t,\vartheta_{0})} \ge \inf_{t\in \Theta:\norm{t-\vartheta_{0}}>\varepsilon}\ell(t,\vartheta_{0})},
	\end{align*}where the upper bound converges to zero, as $n\to \infty$. By the continuous mapping theorem we conclude that
	\[\norm{\hat{\theta}_n-\vartheta_{0}}=\frac{M \norm{T_n-\vartheta_{0}}}{M-\norm{T_n-\vartheta_{0}}}=o_{P^n_{\vartheta_{0}}}(1).\]
\end{proof} 

\subsection{Proofs for \Cref{sec2}}

\begin{proof}[Proof of \Cref{main_thm1}] 
We begin the proof by showing that it holds, for any $M_n\to \infty$, \begin{equation}\int_{ h:\norm{h}> M_n}n^{p/2}\ell(\vartheta_{0},\vartheta_{0}+n^{-1/2}h)\diff P_{h_n\vert {\boldsymbol X}}(h)\stackrel{P^n_{\vartheta_{0}}}{\to}0,\quad n\to \infty.\label{eq_divergence}\end{equation} If we can show that there exist constants $\gamma_1>0$ and $0<\gamma_2<2$ such that
\[\ell(\vartheta_{0},\vartheta)\lesssim \norm{\vartheta_{0}-\vartheta}^pe^{\gamma_1\norm{\vartheta_{0}}^{\gamma_2}+\gamma_1\norm{\vartheta}^{\gamma_2} }\]for all $\vartheta\in \Theta$, then (\ref{eq_divergence}) follows by Proposition \ref{weak_conv_prop1}, because for some $c,a>0$
\[n^{p/2}\ell(\vartheta_{0},\vartheta_{0}+n^{-1/2}h)\lesssim n^{p/2}\norm{\frac{h}{n^{1/2}}}^pe^{c\norm{n^{-1/2}h}^{\gamma} }e^{\gamma_1\norm{\vartheta_0}^{\gamma_2}}\lesssim e^{(c+a)\norm{h}^{\gamma_2}} .\]
By condition~\ref{Thm4.1a1}, we can find $\varepsilon>0$ such that \[c_1\norm{t-\vartheta}^p\le \ell(t,\vartheta)\le c_2\norm{t-\vartheta}^p\]for all $t,\vartheta\in B_\varepsilon(\vartheta_{0})$. In particular, this yields for $\vartheta\in B_\varepsilon(\vartheta_{0})$ and $c_4>0,0<c_5<2$,
\[ \ell(\vartheta_{0},\vartheta)\lesssim \norm{\vartheta-\vartheta_{0}}^p\le \norm{\vartheta-\vartheta_{0}}^pe^{c_4\norm{\vartheta_{0}}^{c_5}+c_4\norm{\vartheta}^{c_5}}.\] 
By condition~\ref{Thm4.1a3} it exists $c_3>0,c_4>0$ and $0<c_5<2$ such that
\[ \ell(\vartheta_{0},\vartheta)\le c_3  e^{c_4\norm{\vartheta_{0}}^{c_5}+c_4\norm{\vartheta}^{c_5}},\quad \forall \vartheta \in \Theta.\]Then, for $\vartheta\in B_\varepsilon(\vartheta_{0})^C$, we have
\[ \ell(\vartheta_{0},\vartheta)\lesssim  e^{c_4\norm{\vartheta_{0}}^{c_5}+c_4\norm{\vartheta}^{c_5}}\le \frac{\norm{\vartheta-\vartheta_{0}}^p}{\varepsilon^p}e^{c_4\norm{\vartheta_{0}}^{c_5}+c_4\norm{\vartheta}^{c_5}}.\] As a consequence, (\ref{eq_divergence}) follows.

	If we can show that $n^{1/2}(\hat{\theta}_n-\vartheta_{0})$ is uniformly tight, then the assertion~follows from the argmax continuous mapping theorem \cite[Corollary 5.58]{v}. Let $M_n\to \infty$ be any diverging sequence. We will show that for some $r\ge 1$, to be determined later in the proof, the term $\prob{n^{1/2}\norm{\hat{\theta}_n-\vartheta_0}>rM_n}$ converges to zero, as $n\to \infty$, which implies that $n^{1/2}(\hat{\theta}_n-\vartheta_0)$ is uniformly tight. By assumption, the Bayes estimator is consistent, i.e.\ $\hat{\theta}_n\stackrel{P^n_{\vartheta_0}}{\longrightarrow}\vartheta_0$.   We find that for any $\varepsilon>0,r\ge 1$
	\[
	\prob{n^{1/2}\norm{\hat{\theta}_n-\vartheta_0}>rM_n}\le\prob{rM_n n^{-1/2}<\norm{\hat{\theta}_n-\vartheta_0}\le \varepsilon}+\prob{\norm{\hat{\theta}_n-\vartheta_0}>\varepsilon}.
	\]
	The right term converges to zero by consistency. We find $\varepsilon>0$ small enough and $c_1,c_2>0$ such that
	\[c_1\norm{t-\vartheta}^p\le \ell(t,\vartheta)\le c_2 \norm{t-\vartheta}^p\]for any $t,\vartheta\in B_{\varepsilon}(\vartheta_0)$. Define the sequence of mappings
	\[\tau_n:\Theta\to H_n,\quad\vartheta\mapsto n^{1/2}(\vartheta-\vartheta_{0}).\] For $n\in \mathbb{N}$ such that $rM_nn^{-1/2}\le \varepsilon$ we have for $t,h\in H_n$ with $\norm{t}\le \varepsilon n^{1/2},\norm{h}\le M_n$ that $\tau_n^{-1}(t),\tau_n^{-1}(h)\in B_{\varepsilon}(\vartheta_0)$.
	Define $\ell_n:H_n\times H_n\to [0,\infty),\,(t,h)\mapsto n^{p/2}\ell(\tau_n^{-1}(t),\tau_n^{-1}(h))$. Consequently, for $t,h\in H_n$ such that $\tau_n^{-1}(t),\tau_n^{-1}(h)\in B_{\varepsilon}(\vartheta_0)$, we find that
	\[c_1\norm{t-h}^p\le \ell_n(t,h)\le c_2 \norm{t-h}^p.\] Because $n^{1/2}(\hat{\theta}_n-\vartheta_{0})$ minimizes the mapping $t\mapsto Z_n(t)$ almost surely, it holds
	\[\prob{rM_nn^{-1/2}<\norm{\hat{\theta}_n-\vartheta_0}\le \varepsilon}\le \prob{\inf_{t\in H_n:rM_n<\norm{t}\le n^{1/2}\varepsilon}Z_n(t)-Z_n(0)\le 0}.\]This term converges to zero, as $n\to \infty$, if we can show that $\inf_{t\in H_n:rM_n<\norm{t}\le n^{1/2}\varepsilon}Z_n(t)-Z_n(0)$ is bounded from below by a random variable, which weakly converges to an almost surely strictly positive random variable. By the previous considerations we obtain for any $t\in H_n$ with $rM_n<\norm{t}\le n^{1/2}\varepsilon$
	\begin{align*}
		&Z_n(t)-Z_n(0)=\int_{H_n}\left(\ell_n(t,h)-\ell_n(0,h)\right)\diff P_{h_n\vert {\boldsymbol X}}(h)\\
		=\, &\int_{h:\norm{h}\le M_n}\left(\ell_n(t,h)-\ell_n(0,h)\right)\diff P_{h_n\vert {\boldsymbol X}}(h)+\int_{h:\norm{h}>M_n}\left(\ell_n(t,h)-\ell_n(0,h)\right)\diff P_{h_n\vert {\boldsymbol X}}(h)\\
		\ge\, &\int_{h:\norm{h}\le M_n}\left(\ell_n(t,h)-\ell_n(0,h)\right)\diff P_{h_n\vert {\boldsymbol X}}(h)-\int_{h:\norm{h}>M_n}\ell_n(0,h)\diff P_{h_n\vert {\boldsymbol X}}(h).
	\end{align*}We already have shown that
	\[\int_{h:\norm{h}>M_n}\ell_n(0,h)\diff P_{h_n\vert {\boldsymbol X}}(h)=o_{P^n_{\vartheta_0}}(1).\]Furthermore, we have
	\begin{align*}
		\int_{h:\norm{h}\le M_n}\left(\ell_n(t,h)-\ell_n(0,h)\right)\diff P_{h_n\vert {\boldsymbol X}}(h)&\ge\int_{h:\norm{h}\le M_n}\left(c_1\norm{t-h}^p-c_2\norm{h}^p\right)\diff P_{h_n\vert {\boldsymbol X}}(h)\\
		&\ge P_{h_n\vert {\boldsymbol X}}(\{h:\norm{h}\le M_n\})\left[ c_1(r-1)^pM_n^p-c_2M_n^p\right]\\
		&\ge  P_{h_n\vert {\boldsymbol X}}(\{h:\norm{h}\le M_n\})M^p\left[ c_1(r-1)^p-c_2\right]
	\end{align*}for some fixed $M>0$, by choosing $r>1$ such that $\eta:=c_1(r-1)^p-c_2>0$ and large enough $n$ such that $M_n\ge M$. We conclude by Proposition \ref{weak_conv_prop1} that
	\[P_{h_n\vert {\boldsymbol X}}(\{h:\norm{h}\le M_n\})M^p\left[ c_1(r-1)^p-c_2\right]\stackrel{D}{\rightarrow}_{P^n_{\vartheta_0}}M^p\left[ c_1(r-1)^p-c_2\right].\]As this limit is strictly positive and holds for any sequence $(M_n)_{n\in \mathbb{N}}$ with $M_n\to \infty$, uniform tightness follows.
\end{proof}

\begin{proof}[Proof of Lemma \ref{weak_conv_prop3}] Note that by conditions~\ref{prop5a1} and \ref{prop5a3} we can apply \Cref{weak_conv_prop1} with $g_n(h):=\sup_{t\in K}\ell_n(t,h)$. Hence, it holds for any $M_n\to \infty$  \begin{equation}\sup_{t\in K}\int_{ h:\norm{h}> M_n}\ell_n(t,h)\diff P_{h_n\vert {\boldsymbol X}}(h)=o_{P^n_{\vartheta_0}}(1).\label{tozero}\end{equation}Moreover, condition \ref{prop5a3} implies that $\sup_{t\in K}\int_{ \mathbb{R}^d}\ell_0(t,h)\diff \mathcal{N}(y,I_{\vartheta_{0}}^{-1})(h)<\infty$ for any $t,y\in \mathbb{R}^d$, as $I_{\vartheta_{0}}$ is positive definite.
	Define the stochastic processes for any $M>0$\begin{align*}
		Z_{n,M}(t)&:=\int_{ h\in H_n:\norm{h}\le M}\ell_n(t,h)\diff P_{h_n\vert {\boldsymbol X}}(h),
		\\
		W_{n,M}(t)&:=\int_{h\in \mathbb{R}^d:\norm{h}\le M}\ell_n(t,h)\diff\mathcal{N}(\Delta_{n,\vartheta_0},I^{-1}_{\vartheta_{0}})(h),\\
		W_M(t)&:=\int_{h\in \mathbb{R}^d:\norm{h}\le M}\ell_0(t,h)\diff\mathcal{N}(Y,I^{-1}_{\vartheta_{0}})(h).
	\end{align*}
 
 For any $t\in K,h\in C=\{h:\norm{h}\le M\}$, we have for some constants $c_3>0,c_4>0,0<c_5<2$
\[\sup_{t\in K,h\in C}\ell_n(t,h)\le c_3\sup_{t\in K,h\in C}\exp\left(c_4\norm{t}^{c_5} +c_4 \norm{h}^{c_5}\right)\lesssim1 \]by the extreme value theorem in real analysis and condition~\ref{prop5a3}.
 Consequently, it holds{\rev
	\begin{align*}
		&\norm{Z_{n,M}-W_{n,M}}_{\infty,K}
        \\=\,&\sup_{t\in K}\abs{ \int_{ h\in H_n:\norm{h}\le M}\ell_n(t,h)\diff P_{h_n\vert {\boldsymbol X}}(h)
       -\int_{h\in \mathbb{R}^d:\norm{h}\le M}\ell_n(t,h)\diff\mathcal{N}(\Delta_{n,\vartheta_0},I^{-1}_{\vartheta_{0}})(h)} \\
       \lesssim\, & \abs{P_{h_n\vert {\boldsymbol X}}(\{h\in \mathbb{R}^d:\norm{h}\le M\}) - \mathcal{N}(\Delta_{n,\vartheta_0},I^{-1}_{\vartheta_{0}})(\{h\in \mathbb{R}^d:\norm{h}\le M\}) }
       \\ \lesssim\, &\norm{P_{H_n\vert {\boldsymbol X}}-\mathcal{N}(\Delta_{n,\vartheta_0},I^{-1}_{\vartheta_{0}})}_{\TV}=o_{P^n_{\vartheta_{0}}}(1),
	\end{align*}}where the last equality follows by the BvM theorem. 
	Define the operator
	\[\mathcal{I}:\mathbb{R}^d\to \ell^\infty(K),\mu \mapsto\left(t\mapsto\int_C \ell_0(t,h)\diff \mathcal{N}(\mu,I^{-1}_{\vartheta_{0}})(h)\right)\]and the sequence of operators
	\[\mathcal{I}_n:\mathbb{R}^d\to \ell^\infty(K),\mu \mapsto\left(t\mapsto\int_C \ell_n(t,h)\diff \mathcal{N}(\mu,I^{-1}_{\vartheta_{0}})(h)\right).\]
	Denote $f_\mu=\diff \mathcal{N}(\mu,I^{-1}_{\vartheta_{0}})/\diff\lambda^d$, where $\mu\in \mathbb{R}^d$ and consider an arbitrary sequence $\mu_n\to \mu$. It holds
	for $\norm{\cdot}_{\infty,K}$ the sup norm on $\ell^\infty(K)$ that
	\begin{align*}
		&\norm{\mathcal{I}_n(\mu_n)-\mathcal{I}(\mu)}_{\infty,K}\\
		\le&\int_C \sup_{t\in K}\abs{\ell_n(t,h)-\ell_0(t,h)}f_{\mu_n}(h) \diff h+\sup_{t\in K,h\in C}\ell_0(t,h)\int_C \abs{f_{\mu_n}(h)-f_\mu(h)} \diff h\\
		\lesssim&\int_C \sup_{t\in K}\abs{\ell_n(t,h)-\ell_0(t,h)} \diff h+\int_C \abs{f_{\mu_n}(h)-f_\mu(h)} \diff h,
	\end{align*} This upper bound converges to zero by {\rev condition~\ref{prop5a2}} and the dominated convergence theorem.
	By the central limit theorem $\Delta_{n,\vartheta_0}\stackrel{D}{\to}_{P^n_{\vartheta_{0}}}Y\sim \mathcal{N}(0,I^{-1}_{\vartheta_{0}})$. As a consequence, the extended continuous mapping theorem  \cite[Theorem~1.11.1]{vanwell} implies that \[\mathcal{I}_n(\Delta_{n,\vartheta_0})=W_{n,M}\stackrel{D}{\longrightarrow}_{P^n_{\vartheta_{0}}}W_M=\mathcal{I}(Y),\quad n\to \infty.\]
	Slutsky's lemma yields
	\[Z_{n,M}\stackrel{D}{\longrightarrow}W_M,\quad n\to \infty\]for any $M>0$. Moreover, we find for $c_4>0,0<c_5<2$ and $\eta_2=\sup_{t\in K}e^{c_4\norm{t}^{c_5}}$ and any $y\in \mathbb{R}^d$ that by condition~\ref{prop5a3}
	\begin{align*}
		\sup_{t\in K}\abs{Z(t)-W_M(t)}&{\rev =\sup_{t\in K} \int_{h:\norm{h}> M}\ell_0(t,h)\diff\mathcal{N}(Y,I^{-1}_{\vartheta_{0}})(h) } 
        \\ &\lesssim \eta_2\int_{h:\norm{h}> M}e^{c_4\norm{h}^{c_5}}\diff \mathcal{N}(Y,I^{-1}_{\vartheta_{0}})(h).
	\end{align*}For $\lambda^d$-a.e.\ $y\in \mathbb{R}^d$  the dominated convergence theorem yields
	\[\int_{h:\norm{h}> M}e^{c_4\norm{h}^{c_5}}\diff \mathcal{N}(y,I^{-1}_{\vartheta_{0}})(h)\to 0,\quad M\to \infty.\] Hence,
	\(W_M\stackrel{\mathbb{P}}{\to}Z,\, M\to \infty\) in $\ell^\infty(K)$. Conclude that there exists $M_n\to \infty$ such that $Z_{n,M_n}\stackrel{D}{\to}Z$ in $\ell^\infty(K)$, as $n\to \infty$. 
	Finally, by (\ref{tozero})
	\[\norm{Z_n-Z_{n,M_n}}_{\infty,K}=\sup_{t\in K}\int_{ h:\norm{h}> M_n}\ell_n(t,h)\diff P_{h_n\vert {\boldsymbol X}}(h)=o_{P^n_{\vartheta_0}}(1),\]which yields by Slutsky's Lemma
	\[Z_n\stackrel{D}{\longrightarrow}_{P^n_{\vartheta_{0}}}Z\]in $\ell^\infty(K)$.
\end{proof}

\begin{proof}[Proof of \Cref{corollary1}]
	We define the sequences of loss functions 
	$$
	\ell_n(t,h):=n^{p/2}\ell(n^{-1/2}t+\vartheta_{0},n^{-1/2}h+\vartheta_{0}),
	$$
	{\rev
	and will employ \Cref{weak_conv_prop3}. To this end, note that requirement~\ref{prop5a2} in \Cref{weak_conv_prop3} follows directly from condition~\ref{cor1a3} here. Due to compactness of $K$, there exists $N >0$ such that  $t/n^{1/2}+\vartheta \in B_{\delta}(\vartheta_0)$ for all $t\in K,n\ge N$. 
By condition~\ref{cor1a2},  we have
\[
\int_{\Theta}\sup_{t\in K}\ell_n\!\bigl(t,n^{1/2}(\vartheta-\vartheta_{0})\bigr)\,\mathrm{d} \Pi(\vartheta)\le n^{p/2}\int_{\Theta}\sup_{t\in B_\delta(\vartheta_0)}\ell(t,\vartheta)\,\mathrm{d} \Pi(\vartheta),\quad n\ge N.
\]
This shows requirement~\ref{prop5a1} in \Cref{weak_conv_prop3}.
}\label{rev:prop3.3}
 
    We will next show that for any compact set $K\subset \mathbb{R}^d$ there exist some $0<a<2$ and $\xi>0$ such that for $n$ large enough
\[\ell_n(t,h)\lesssim e^{\xi\norm{t}^a+\xi \norm{h}^a},\quad t\in K,h\in H_n,\]
i.e.~requirement~\ref{prop5a3} in \Cref{weak_conv_prop3}. 
By condition~\ref{cor1a1}, there exist $p>0$ and some sufficiently small $\varepsilon>0$ such that
	\[\ell(t,\vartheta)\lesssim \norm{t-\vartheta}^p\]for all $t,h\in B_\varepsilon(\vartheta_{0})$. Moreover, by condition~\ref{cor1a4}, we have for some $\delta>0$ that there exist $c_2>0,c_3>0,0<c_4<2$ such that 
	\begin{equation}
	\ell(t,\vartheta)\le c_2e^{c_3\norm{t}^{c_4}+c_3\norm{\vartheta}^{c_4}},\quad t\in B_\delta(\vartheta_0),\vartheta\in \Theta.\label{cor1_eq1}
	\end{equation} 
	Without loss of generality, we can assume that $\delta < \varepsilon$; otherwise consider $\delta^\prime=\min(\delta,\varepsilon/2)$ instead. Let $N>0$ be large enough such that for $\eta:=\sup_{t\in K}\norm{t}\le \delta n^{1/2}/2$ for all $n\ge N$. We separate the range of $h$ between $A_n:=\{h\in H_n:\sup_{t\in K}\norm{t-h}\le \delta n^{1/2}/2\}$ and $B_n:=\{h\in H_n:\sup_{t\in K}\norm{t-h}\ge\delta n^{1/2}/2\}$. For any $n\ge N$  and $h\in A_n$, we have
	\[ 
	\frac{\norm{h}}{n^{1/2}}\le \frac{\norm{t}}{n^{1/2}}+\frac{\norm{t-h}}{n^{1/2}}\le \delta<\varepsilon,\quad t\in K,
	\]
	i.e.\ $n^{-1/2}h+\vartheta_{0}\in B_\varepsilon(\vartheta_0)$ and $n^{-1/2}t+\vartheta_{0}\in B_\varepsilon(\vartheta_0)$. This yields, for any $t\in K$, $h\in A_n$ and come constant $c>0$,
	\[
	n^{p/2}\ell(n^{-1/2}t+\vartheta_{0},n^{-1/2}h+\vartheta_{0})\lesssim n^{p/2} \frac{\norm{t-h}^p}{n^{p/2}}\le c e^{c_3\norm{t}^{c_4}+c_3\norm{h}^{c_4}}. 
	\]
	Now consider any $h\in B_n$. We obtain with $c_3>0,0<c_4<2$ in \eqref{cor1_eq1} and some $\xi_0,\xi_1>0$,
	\begin{align*}
		n^{p/2}\ell(n^{-1/2}t+\vartheta_{0},n^{-1/2}h+\vartheta_{0})&\lesssim n^{p/2}e^{c_3(\norm{n^{-1/2}t+\vartheta_{0}}^{c_4}+\norm{n^{-1/2}h+\vartheta_{0}}^{c_4})} \\
		&\lesssim n^{p/2}e^{c_3\xi_0 (\norm{t}^{c_4}+\norm{h}^{c_4})}\\
		&\le 2^p \delta^{-p} \left(\sup_{t\in K}\norm{t-h}^p\right) e^{c_3\xi_0 (\norm{t}^{c_4}+\norm{h}^{c_4})}\\
		&\lesssim e^{c_3\xi_0\xi_1(\norm{t}^{c_4}+\norm{h}^{c_4})}
	\end{align*}for any $t\in K$. Thus, for any $t\in K$ and any $h\in H_n$ there exist some $\xi>0,0<a<2$ such that
\[
\ell_n(t,h)\lesssim e^{\xi(\norm{t}^a+\norm{h}^a)}.
\]
Now the assertion follows directly from \Cref{weak_conv_prop3}. 
\end{proof}

\subsection{Technical details for \Cref{b:integer:order}}\label[appendix]{ss:detail:io}

By continuity, there exists $\varepsilon>0$ such that for all $\vartheta \in B_\varepsilon(\vartheta_0)$, 
 \[
 D^k(\vartheta)[u]>0.
 \]
 Let $S^{d-1}=\{u\in \mathbb{R}^d:\norm{u}=1\}$ and set 
    \[
    0<\lambda_{\text{min} }:=\inf_{\vartheta\in B_\varepsilon(\vartheta_0),u\in S^{d-1}}D^k(\vartheta)[u]\le\lambda_{\text{max} }:=\sup_{\vartheta\in B_\varepsilon(\vartheta_0),u\in S^{d-1}}D^k(\vartheta)[u] <\infty.
    \]
As $D^k(\vartheta)[u]$ is homogeneous in $u\in \mathbb{R}^d$ of order $k$, we have
    \[\lambda_{\text{min} }\norm{u}^k\le \norm{u}^kD^k(\vartheta)[u/\norm{u}]=D^k(\vartheta)[u]\le \lambda_{\text{max} }\norm{u}^k\]for any $\vartheta\in  B_{\varepsilon}(\vartheta_0)$. 
     By a Taylor expansion of order $k$, 
    \[
    \ell(t,\vartheta)=D^k(\vartheta)[t-\vartheta]+\xi(t,\vartheta),
    \]
    where the remainder term $\xi(t,\vartheta)$ satisfies, for some constant $M>0$, that 
    \[
    \abs{\xi(t,\vartheta)}\le M\norm{t-\vartheta}^{k+1},
    \]
    for all $t,\vartheta \in B_\varepsilon(\vartheta_0)$. Thus, for $t,\vartheta\in B_\varepsilon(\vartheta_0)$
    \begin{align*}
        \ell(t,\vartheta)&\ge \lambda_{\text{min} }\norm{t-\vartheta}^k-M\norm{t-\vartheta}^{k+1}\\
        &\ge \norm{t-\vartheta}^k(\lambda_{\text{min} }-M\norm{t-\vartheta})\\
        &\ge \norm{t-\vartheta}^k(\lambda_{\text{min} }-2M\varepsilon).
    \end{align*}
Choosing $\varepsilon<\lambda_{\text{min} }/(2M)$ yields the lower bound. The upper bound is obtained, for $\varepsilon<1$, by
    \begin{align*}
       \ell(t,\vartheta)&\le   \lambda_{\text{max} }\norm{t-\vartheta}^k+M\norm{t-\vartheta}^{k+1}\\
       &\le \norm{t-\vartheta}^k(\lambda_{\text{max} }+2M).
    \end{align*}
    
\subsection{Technical details for Section \ref{loc_q}}
{\rev\subsubsection{Detailed derivation for equation~\eqref{e:up:lo:quad}} 
\label{sss:technical}
The upper bound follows from
\begin{align*}
\ell(t,\vartheta) &= \langle t-\vartheta,\, A(\vartheta)(t-\vartheta) \rangle + \xi(t,\vartheta) \\
&\lesssim \|t-\vartheta\|^2 \|A(\vartheta)\|_{\mathsf{F}} + \|t-\vartheta\|^{2} \|t-\vartheta\|^{c_1-2} \lesssim \|t-\vartheta\|^2,
\end{align*}
using the continuity of $\vartheta\mapsto\|A(\vartheta)\|_{\F}$ and $(t,\vartheta)\mapsto\|t-\vartheta\|^{c_1-2}$, together with the extreme value theorem {in real analysis.}

For the lower bound, let $\lambda_{\min}(\vartheta)$ denote the smallest eigenvalue of $A(\vartheta)$. By continuity of {\rev $\vartheta \mapsto A(\vartheta)$} and positive definiteness of $A(\vartheta_0)$, there exists $\delta>0$ such that
\[
\eta_\delta := \inf_{\|\vartheta-\vartheta_0\|\le \delta} \lambda_{\min}(\vartheta) > 0.
\]
Then \( \ell(t,\vartheta) = \langle t-\vartheta,\, A(\vartheta)(t-\vartheta) \rangle + \xi(t,\vartheta) \ge \eta_\delta \|t-\vartheta\|^2 - |\xi(t,\vartheta)|. \) Since
\[
\|t-\vartheta\|^2\big(\eta_\delta - \|t-\vartheta\|^{c_1-2}\big)
= \eta_\delta \|t-\vartheta\|^2 - \|t-\vartheta\|^{c_1}
\lesssim \eta_\delta \|t-\vartheta\|^2 - |\xi(t,\vartheta)|,
\]
and $\eta_\delta$ is nonincreasing in $\delta$, we can choose $\varepsilon>0$ and $\eta>0$ such that $\eta_\delta\ge\eta>0$ for all $\delta<\varepsilon$. If $\delta<\varepsilon$ and $t,\vartheta\in B_\delta(\vartheta_0)$, then
\[
\|t-\vartheta\|^2\big(\eta_\delta - \|t-\vartheta\|^{c_1-2}\big)
\ge \|t-\vartheta\|^2\big(\eta - (2\delta)^{c_1-2}\big),
\]
and choosing $\delta$ small enough so that $\eta - (2\delta)^{c_1-2} > 0$ yields the lower bound in~\eqref{e:up:lo:quad}.
}

\subsubsection{Proof of \Cref{main_thm2}}

We apply \Cref{main_thm1} with $\ell_0=\s{t-h}{A(\vartheta_{0})(t-h)}$, which yields the result. It therefore suffices to verify requirement~\ref{Thm4.1a2} in \Cref{main_thm1}. 

To this end, we invoke \Cref{corollary1}, {\rev for which requirement~\ref{cor1a1} follows from \Cref{assumption3}, particularly, from equation~\eqref{e:up:lo:quad}, whose derivation is given above.} Requirements~\ref{cor1a2} and \ref{cor1a4} in \Cref{corollary1} are satisfied by~\Cref{assumption3,assumption2}. 
It remains to verify {\rev requirement~\ref{cor1a3} in \Cref{corollary1}}, namely, for any $h\in \mathbb{R}^d$,
\[
\sup_{t\in  K}\abs{\frac{\ell(t\varepsilon+\vartheta_0,h\varepsilon+\vartheta_0)}{\varepsilon^2}-\ell_0(t,h)}\to 0,\quad \varepsilon\to 0.
\]
For any $\delta>0$ there exists $\varepsilon>0$ suffiiciently small such that $t\varepsilon+\vartheta_0, h\varepsilon+\vartheta_0 \in B_\delta(\vartheta_0)$ for all $t\in K$. Choose $\delta>0$ small enough and some $c_1>2$, such that $\abs{\xi(t,\vartheta)}\lesssim \norm{t-\vartheta}^{c_1}$ for all $t,\vartheta \in B_\delta(\vartheta_0)$. Then
\begin{align*}
    &\sup_{t\in  K}\abs{\frac{\ell(t\varepsilon+\vartheta_0,h\varepsilon+\vartheta_0)}{\varepsilon^2}-\ell_0(t,h)}\\
    =\,&\sup_{t\in  K}\abs{\s{t-h}{A(h\varepsilon+\vartheta_0)(t-h) } +\frac{1}{\varepsilon^2} \xi(t\varepsilon+\vartheta_0,h\varepsilon+\vartheta_0)-\ell_0(t,h)}\\
    \le\,&  \sup_{t\in  K}\norm{t-h}^2\norm{A(\vartheta_{0}+h\varepsilon)-A(\vartheta_{0}))}_{\F} + \sup_{t\in  K}\frac{1}{\varepsilon^2}\abs{\xi(t\varepsilon+\vartheta_0,h\varepsilon+\vartheta_0)}\\
    \lesssim\,&\norm{A(\vartheta_{0}+h\varepsilon)-A(\vartheta_{0}))}_{\F} +\varepsilon^{c_1-2} \sup_{t\in  K}\norm{t-h}^{c_1}.\end{align*}This upper bound converges to zero, as $\varepsilon\to 0$ by continuity of the components of $A$ and the Frobenius norm.
\hfill\qedsymbol

\subsection{Technical derivations for \Cref{sec5}}\label[appendix]{ss:prf:s5}
\subsubsection{Additional details in \Cref{L1}}
\label{additional_details_ex3}
{\rev 
The reverse triangle inequality and the Cauchy--Schwarz inequality yield uniform convergence as follows
\begin{align*}
   &\sup_{t\in K} \abs{\varepsilon^{-1}\ell(\varepsilon t +\vartheta_0,\varepsilon h+\vartheta_0)-\ell_0(t,h)}\\
   \le\; &\sup_{t\in K} \abs{\norm{\s{t-h}{\dot p_{\vartheta_0+\varepsilon h}}}_{L^1}-\norm{\s{t-h}{\dot p_{\vartheta_0}}}_{L^1}}+\sup_{t\in K} \varepsilon^{-1}\abs{R(\varepsilon t +\vartheta_0,\varepsilon h+\vartheta_0)}\\
   \le\; & \sup_{t\in K} \norm{\s{t-h}{\dot p_{\vartheta_0+\varepsilon h}-\dot p_{\vartheta_0}}}_{L^1}+o(1)
   \le \left(\sup_{t\in K}\norm{t-h}\right) \norm{\dot p_{\vartheta_0+\varepsilon h}-\dot p_{\vartheta_0}}_{L^1}+o(1),
\end{align*}
which converges to zero, as $\varepsilon\to 0$, by continuity.  }
\subsubsection{Additional details in \Cref{b:norm}}

We show the claim in \Cref{b:norm}: For any compact $K\subset \mathbb{R}^d$ and any $h\in \mathbb{R}^d$
    \begin{align*}
   \sup_{t\in K} \abs{\varepsilon^{-1}\ell(\varepsilon t +\vartheta_0,\varepsilon h+\vartheta_0)-\ell_0(t,h)}\to 0,\qquad \text{as}\quad\varepsilon \to 0.
\end{align*}
For ease of notation, let $t_\varepsilon=t\varepsilon+\vartheta_0,h_\varepsilon=h\varepsilon+\vartheta_0$ and $g_\vartheta(t)=\norm{p_t-p_\vartheta-\s{t-\vartheta}{\dot p_{\vartheta}}}_{L^1}$. Note that $L^1$-differentiability implies that $g_h(t)=o(\norm{t-h})$, as $t\to h$. Thus, for any $t\in K$, where $K\subset \mathbb{R}$ is any compact set, it holds
    \begin{align*}
        \abs{\frac{\ell(t_\varepsilon,h_\varepsilon)}{\varepsilon}-\ell_0(t,h)}&\le \varepsilon^{-1}\norm{p_{t_\varepsilon}-p_{h_\varepsilon}-\s{t_\varepsilon-h_\varepsilon}{\dot p_{\vartheta_0}}}_{L^1}\\
        &=\varepsilon^{-1}\norm{p_{t_\varepsilon}-p_{\vartheta_0}-\s{t_\varepsilon-\vartheta_0}{\dot p_{\vartheta_0}}+p_{\vartheta_0}-p_{h_\varepsilon}-\s{\vartheta_0-h_\varepsilon}{\dot p_{\vartheta_0}}}_{L^1}
        \\
        &\le\frac{ g_{\vartheta_0}(t_\varepsilon)+g_{\vartheta_0}(h_\varepsilon) }{\varepsilon}.
    \end{align*}By $L^1$-differentiability
    \[\frac{ g_{\vartheta_0}(h_\varepsilon) }{\varepsilon} \to 0,\quad \varepsilon\to 0.\] Thus, the only thing left to show for the uniform convergence over all $t\in K$ is that also
    \[
    \sup_{t\in K}\frac{ g_{\vartheta_0}(t_\varepsilon) }{\varepsilon}=\sup_{t\in K}\frac{\norm{p_{t_\varepsilon}-p_{\vartheta_0}-\s{t_\varepsilon-\vartheta_0}{\dot p_{\vartheta_0}}}_{L^1}}{\varepsilon}\to 0.
    \] 
    For an arbitrarily fixed $x\in \mathcal{X}$, denote 
    \[f_\varepsilon(t)=\abs{\frac{p_{\vartheta_0+\varepsilon t}(x)-p_{\vartheta_0}(x) }{\varepsilon}-\s{t}{\dot p_{\vartheta_0}(x)} }.\] Note that $f_\varepsilon(t)$ converges to zero for any $t$, by differentiability of the normal density in the parameter. If we can show that it also holds uniformly in $t$, we conclude that also $\sup_{t\in K}\varepsilon^{-1} g_{\vartheta_0}(t_\varepsilon)\to 0$, by the dominated convergence theorem. The required property is directional differentiability of $\vartheta \mapsto p_\vartheta(x)$ at $\vartheta_0$ in direction $t$ uniformly over all $t\in K$. This follows from the fact that the normal density is continuously differentiable in the parameter: For $\eta=\sup_{t\in K}\norm{t}<\infty$ and $0\le \alpha \le 1$, the mean value theorem yields
    \[f_\varepsilon(t)=\abs{\s{t}{\dot p_{\vartheta_0+\eta \alpha t}(x)-\dot p_{\vartheta_0}(x)   }}\le \eta \sup_{\vartheta\in B_{\eta \varepsilon}(\vartheta_0)} \norm{\dot p_{\vartheta}(x)- \dot p_{\vartheta_0}(x)}.\]The upper bound converges to zero, as $\varepsilon\to 0$, by continuity.

{
\subsubsection{Additional details in \Cref{ss:beyond}}\label[appendix]{sss:beyond}
Note that
\[
\frac{1}{n}\sum_{i=1}^n W_2^2(t,X_i)=\sum_{j=1}^d N_jW_2^2(t,y_j).
\]
By Lemma~\ref{lem:W2-mult}, we have $W_2^2(t,y_j)=\sum_{k=1}^{d-1}\abs{t_k-y_{j,k}}$, which yields
\begin{align*}
    &\min_{t\in [0,1]^d:\norm{t}_1=1}\frac{1}{n}\sum_{i=1}^n W_2^2(t,X_i)\\
    =\;&\min_{(t_1,\dots,t_{d-1})\in [0,1]^{d-1}:\sum_{k=1}^{d-1}t_k\le 1}\sum_{j=1}^d N_j\sum_{k=1}^{d-1}\abs{t_k-y_{j,k}}
    \\=\;&\min_{(t_1,\dots,t_{d-1})\in [0,1]^{d-1}:\sum_{k=1}^{d-1}t_k\le 1} \sum_{k=1}^{d-1}\left(N_k(1-t_k) +\sum_{j=1,\dots,d;j\neq k} N_jt_k\right).
\end{align*}
Note that $\sum_{j=1}^d N_j=1$. Then, it holds
\begin{align*}
    \sum_{k=1}^{d-1}\left(N_k(1-t_k)+\sum_{j=1,\dots,d;j\neq k} N_jt_k\right)&=\sum_{k=1}^{d-1}\bigl(N_k(1-t_k)+(1-N_k)t_k\bigr)\\
    &=\sum_{k=1}^{d-1}N_k+\sum_{k=1}^{d-1}(1-2N_k)t_k.
\end{align*}
We can write the optimization problem as a standard form linear program
\[\min_{x\ge 0}c^\intercal x, \quad \text{s.t.}\;Ax=b,\]where
\begin{align*}
    x=\left(\begin{matrix}
        t_1\\
        \vdots\\
        t_{d- 1}\\
        u_1\\
         \vdots\\
        u_{d- 1}\\
        v
    \end{matrix}\right)\in \mathbb{R}^{2(d-1)+1},\; c=\left(\begin{matrix}
        1-N_1\\
        \vdots\\
1-N_{d- 1}\\
        0\\
         \vdots\\
        0
    \end{matrix}\right)\in \mathbb{R}^{2(d-1)+1},\; b=\left(\begin{matrix}
        1\\
        \vdots\\
        1
    \end{matrix}\right)\in \mathbb{R}^{d}
\end{align*}and
\[A
=\left(\begin{matrix}
    \mathcal{I}_{d-1}&   \mathcal{I}_{d-1}& \mathbf{0}_{d-1}\\
    \mathbf{1}_{d-1}^\intercal& \mathbf{0}_{d-1}^\intercal &1
\end{matrix}\right)\in \mathbb{R}^{d\times 2(d-1)+1}.
    \]
In addition, an equivalent optimization problem is given by the dual problem
\[\min_{\lambda \in \mathbb{R}^d}-b^\intercal \lambda, \quad \text{s.t.} \; A^\intercal \lambda \le c ,\]which can be rewritten, by replacing $\lambda \in \mathbb{R}^d$ by $u-v\in \mathbb{R}^d$, where $u,v\ge 0$, into a standard form as
\[\min_{u,v,z\ge 0}(b^\intercal,-b^\intercal )\left(\begin{matrix}
    u\\
    v
\end{matrix}\right), \quad \text{s.t.} \; (A^\intercal,-A^\intercal,\mathcal{I}_{d}) \left(\begin{matrix}
    u\\
    v\\
    z
\end{matrix}\right) = c .
\]
}

 \subsection{Technical derivations for \Cref{sec6}}\label[appendix]{ss:prf:s6}

\begin{lemma}\label{lem:W2-mult}
Let $\mathcal{X}=\{y_1,\dots,y_d\}$ where $y_j$ is the $j$th unit vector in $\mathbb{R}^d$, and let $P_p$ and $P_\vartheta$ be multinomial measures supported on $\mathcal{X}$ with parameters $p,\vartheta\in[0,1]^d$ satisfying $\sum_{i=1}^d p_i = \sum_{i=1}^d \vartheta_i = 1$.  Then the squared 2-Wasserstein distance between $P_p$ and $P_\vartheta$ is 
\[
W_2^2(P_p,P_\vartheta) = 1 - \sum_{i=1}^d \min(p_i,\vartheta_i).
\]
\end{lemma}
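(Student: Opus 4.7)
The plan is to reduce the 2-Wasserstein problem to a finite linear program on the $d\times d$ matrix of coupling probabilities and solve it by maximizing the diagonal mass. The starting observation is that for the unit vectors in $\mathbb{R}^d$ one has $\|y_i-y_j\|^2 = 2\mathbb{I}(i\neq j)$, so the cost function takes only the two values $0$ and (a constant multiple of) $1$. Consequently, any coupling $\pi\in\Gamma(P_p,P_\vartheta)$ is described by the nonnegative matrix $(\pi_{ij}) = (\pi(\{y_i\}\times\{y_j\}))$ with row sums $p_i$ and column sums $\vartheta_j$, and the transport cost reduces to a constant multiple of $\sum_{i\neq j}\pi_{ij} = 1 - \sum_i \pi_{ii}$. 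Minimizing the cost is therefore equivalent to maximizing $\sum_i \pi_{ii}$.

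Next I would derive the trivial upper bound from the marginal constraints: since $\pi_{ii} \le \sum_j\pi_{ij} = p_i$ and $\pi_{ii} \le \sum_j\pi_{ji} = \vartheta_i$, we have $\pi_{ii} \le \min(p_i,\vartheta_i)$ and thus $\sum_i \pi_{ii} \le \sum_i \min(p_i,\vartheta_i)$. This gives the lower bound on $W_2^2(P_p,P_\vartheta)$, namely
\[
W_2^2(P_p,P_\vartheta) \ge c\Bigl(1 - \sum_i \min(p_i,\vartheta_i)\Bigr)
\]
for the appropriate constant $c$ coming from $\|y_i-y_j\|^2$.

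To complete the argument, I would exhibit an explicit coupling achieving this bound. Set $\pi_{ii} := \min(p_i,\vartheta_i)$ and let $r_i := p_i - \pi_{ii} = (p_i - \vartheta_i)_+$, $s_j := \vartheta_j - \pi_{jj} = (\vartheta_j - p_j)_+$. Since $\sum_i r_i = \sum_j s_j = 1 - \sum_i\min(p_i,\vartheta_i)$, the classical assignment construction (e.g.\ the north-west corner rule, or simply $\pi_{ij} := r_i s_j/\sum_k r_k$ when the denominator is positive and $0$ otherwise) produces nonnegative off-diagonal entries with the correct marginals. This certifies that the lower bound is tight and yields the stated formula.

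The only thing requiring care is the normalizing constant: a routine check gives $\|y_i-y_j\|^2 = 2\mathbb{I}(i\neq j)$, so the argument above produces $W_2^2 = 2\bigl(1 - \sum_i \min(p_i,\vartheta_i)\bigr)$; to match the statement as written one uses the identity $1-\sum_i\min(p_i,\vartheta_i) = \tfrac{1}{2}\|p-\vartheta\|_1$ together with the normalization employed elsewhere in \Cref{Mult_example}. No step beyond the marginal inequality and the explicit completion of the diagonal coupling is required, so the only genuine obstacle is the careful bookkeeping of this constant.
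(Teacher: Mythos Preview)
Your approach is essentially the same as the paper's: both reduce the Wasserstein problem on $\mathcal{X}$ to a finite linear program over $d\times d$ coupling matrices, note that the cost depends only on the off-diagonal mass $1-\sum_i\pi_{ii}$, and then maximize the diagonal subject to the marginal constraints to obtain $\pi_{ii}=\min(p_i,\vartheta_i)$.

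You are in fact more careful than the paper in two places. First, the paper passes from the full LP to the reduced problem ``maximize $\sum_i d_i$ subject to $d_i\le p_i$, $d_i\le\vartheta_i$'' by declaring the slack variables $s_{i,j}$ irrelevant, without checking that nonnegative slacks with the right marginals actually exist once the diagonal is fixed; your explicit residual construction (e.g.\ $\pi_{ij}=r_is_j/\sum_k r_k$) fills this gap. Second, your observation about the constant is correct: for standard unit vectors one has $\|y_i-y_j\|^2=2\,\mathbb{I}(i\neq j)$, whereas the paper's proof takes the cost matrix to be $C_{ij}=\mathbb{I}(i\neq j)$ and calls this ``squared Euclidean distance.'' So the argument in the paper (and the lemma as stated) is off by a factor of $2$; the formula consistent with the squared Euclidean cost is $W_2^2(P_p,P_\vartheta)=2\bigl(1-\sum_i\min(p_i,\vartheta_i)\bigr)=\|p-\vartheta\|_1$. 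Your final paragraph identifies exactly this bookkeeping issue.
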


\begin{proof}
The optimal transport problem is
\[
\min_{\pi\in[0,1]^{d\times d}} \ \mathrm{tr}(\pi^\intercal C)
\quad\text{subject to }\quad \pi\mathbf{1} = p,\quad \pi^\intercal\mathbf{1} = \vartheta,
\]
where $\mathbf{1}=(1,\dots,1)^\intercal$ is the vector with all components equal to one, and $C\in \mathbb{R}^{d\times d}$ is the cost matrix with $C_{ij} = \mathbb{I}(i\neq j)$, corresponding to squared Euclidean distance between $y_i$ and $y_j$. 

Let ${I}_d$ be the $d\times d$ identity matrix. Then, we have, for any feasible $\pi \in [0,1]^{d\times d}$,
\[ 
\tr(\pi^\intercal C)=1- \tr(\pi^\intercal {I}_d)=1-\sum_{i=1}^d \pi_{i,i}.
\]
Denote $d_i=\pi_{i,i}$ and the slack variables $s_{i,j}=\pi_{i,j}$ for $i \neq j$, we can rewrite the optimization problem as
\begin{align*}
    &\min_{0\le d_i\le 1}  1-\sum_{i=1}^d d_i\\
    \text{subject to } &d_i+\sum_{j\neq i}s_{i,j}=p_i, d_j+\sum_{i\neq j}s_{i,j}=\vartheta_j, s_{i,j}\ge 0, i,j=1,\dots,d.
\end{align*}As the objective value is independent of the $s_{i,j}$, the problem is equivalent to
\begin{align*}
    &  1-\max_{0\le d_i\le 1}\sum_{i=1}^d d_i\\
    \text{subject to }&d_i\le p_i, d_j\le \vartheta_j, i,j=1,\dots,d.
\end{align*}
This problem is solved explicitly by
\[ 
W_2^2(P_p,P_\vartheta) =1-\sum_{i=1}^d \min(p_i,\vartheta_i),
\]
which shows the assertion. 
\end{proof}

\begin{thebibliography}{}

\bibitem[Agueh and Carlier, 2011]{Agueh-carlier}
Agueh, M. and Carlier, G. (2011).
\newblock Barycenters in the {W}asserstein space.
\newblock {\em SIAM J. Math. Anal.}, 43(2):904--924.

\bibitem[\'{A}lvarez Esteban et~al., 2018]{esteban2018}
\'{A}lvarez Esteban, P.~C., del Barrio, E., Cuesta-Albertos, J.~A., and
  Matr\'{a}n, C. (2018).
\newblock Wide consensus aggregation in the {W}asserstein space. {A}pplication
  to location-scatter families.
\newblock {\em Bernoulli}, 24(4A):3147--3179.

\bibitem[Ambrogioni et~al., 2018]{Ambrogioni2018}
Ambrogioni, L., G{\"u}{\c{c}}l{\"u}, U., G{\"u}{\c{c}}l{\"u}t{\"u}rk, Y.,
  Hinne, M., van Gerven, M.~A., and Maris, E. (2018).
\newblock Wasserstein variational inference.
\newblock {\em Adv. Neural Inf. Process. Syst.}, 31.

\bibitem[Anastasiou et~al., 2023]{anastasiou2023}
Anastasiou, A., Barp, A., Briol, F.-X., Ebner, B., Gaunt, R.~E., Ghaderinezhad,
  F., Gorham, J., Gretton, A., Ley, C., Liu, Q., Mackey, L., Oates, C.~J.,
  Reinert, G., and Swan, Y. (2023).
\newblock Stein's method meets computational statistics: a review of some
  recent developments.
\newblock {\em Statist. Sci.}, 38(1):120--139.

\bibitem[Andersen and Gill, 1982]{Andersen1982}
Andersen, P.~K. and Gill, R.~D. (1982).
\newblock Cox's regression model for counting processes: a large sample study.
\newblock {\em Ann. Statist.}, 10(4):1100--1120.

\bibitem[Arbas et~al., 2023]{arbas2023}
Arbas, J., Ashtiani, H., and Liaw, C. (2023).
\newblock Polynomial time and private learning of unbounded gaussian mixture
  models.
\newblock In {\em Proc. Int. Conf. Mach. Learn.}, ICML'23. JMLR.org.

\bibitem[Arjovsky et~al., 2017]{arjovsky2017}
Arjovsky, M., Chintala, S., and Bottou, L. (2017).
\newblock {W}asserstein generative adversarial networks.
\newblock In Precup, D. and Teh, Y.~W., editors, {\em Proc. Int. Conf. Mach.
  Learn.}, volume~70 of {\em Proceedings of Machine Learning Research}, pages
  214--223. PMLR.

\bibitem[Averbukh and Smolyanov, 1968]{Averbukh1968}
Averbukh, V.~I. and Smolyanov, O.~G. (1968).
\newblock The various definitions of the derivative in linear topological
  spaces.
\newblock {\em Russ. Math. Surv.}, 23(4):67.

\bibitem[Backhoff-Veraguas et~al., 2022]{Backhoff2022}
Backhoff-Veraguas, J., Fontbona, J., Rios, G., and Tobar, F. (2022).
\newblock Bayesian learning with {W}asserstein barycenters.
\newblock {\em ESAIM Probab. Stat.}, 26:436--472.

\bibitem[Banerjee et~al., 2005]{Banerjee2005}
Banerjee, A., Merugu, S., Dhillon, I.~S., and Ghosh, J. (2005).
\newblock Clustering with {B}regman divergences.
\newblock {\em J. Mach. Learn. Res.}, 6:1705--1749.

\bibitem[Berger, 1985]{Berger85}
Berger, J.~O. (1985).
\newblock {\em Statistical Decision Theory and {B}ayesian Analysis}.
\newblock Springer Series in Statistics. Springer-Verlag, New York, second
  edition.

\bibitem[Bernardo and Smith, 1994]{Bernardo1994}
Bernardo, J.-M. and Smith, A. F.~M. (1994).
\newblock {\em Bayesian Theory}.
\newblock Wiley Series in Probability and Mathematical Statistics: Probability
  and Mathematical Statistics. John Wiley \& Sons, Ltd., Chichester.

\bibitem[Bernstein, 1917]{bernstein}
Bernstein, S. (1917).
\newblock {\em Theory of Probability (in Russian)}.
\newblock Moscow.

\bibitem[Bernton et~al., 2019a]{Bernton2019_2}
Bernton, E., Jacob, P.~E., Gerber, M., and Robert, C.~P. (2019a).
\newblock Approximate {B}ayesian computation with the {W}asserstein distance.
\newblock {\em J. R. Stat. Soc. Ser. B. Stat. Methodol.}, 81(2):235--269.

\bibitem[Bernton et~al., 2019b]{Bernton2019}
Bernton, E., Jacob, P.~E., Gerber, M., and Robert, C.~P. (2019b).
\newblock On parameter estimation with the {W}asserstein distance.
\newblock {\em Inf. Inference}, 8(4):657--676.

\bibitem[Bickel and Kleijn, 2012]{BiKl12}
Bickel, P.~J. and Kleijn, B. J.~K. (2012).
\newblock The semiparametric {B}ernstein-von {M}ises theorem.
\newblock {\em Ann. Statist.}, 40(1):206--237.

\bibitem[Bickel and Yahav, 1969]{Bickel1969}
Bickel, P.~J. and Yahav, J.~A. (1969).
\newblock Some contributions to the asymptotic theory of {B}ayes solutions.
\newblock {\em Z. Wahrscheinlichkeitstheorie und Verw. Gebiete}, 11:257--276.

\bibitem[Bingham and Ostaszewski, 2013]{BiOs13}
Bingham, N.~H. and Ostaszewski, A.~J. (2013).
\newblock The {S}teinhaus theorem and regular variation: de {B}ruijn and after.
\newblock {\em Indag. Math. (N.S.)}, 24(4):679--692.

\bibitem[Bochkina and Green, 2014]{BoGr14}
Bochkina, N.~A. and Green, P.~J. (2014).
\newblock The {B}ernstein-von {M}ises theorem and nonregular models.
\newblock {\em Ann. Statist.}, 42(5):1850--1878.

\bibitem[Bunke and Milhaud, 1998]{Bunke98}
Bunke, O. and Milhaud, X. (1998).
\newblock Asymptotic behavior of {B}ayes estimates under possibly incorrect
  models.
\newblock {\em Ann. Statist.}, 26(2):617--644.

\bibitem[Bunne et~al., 2023]{bunne2023}
Bunne, C., Stark, S.~G., Gut, G., del Castillo, J.~S., Levesque, M., Lehmann,
  K.-V., Pelkmans, L., Krause, A., and Rätsch, G. (2023).
\newblock Learning single-cell perturbation responses using neural optimal
  transport.
\newblock {\em Nat. Methods}, 20(11):1759--1768.

\bibitem[Carlier et~al., 2024]{CCE24}
Carlier, G., Chenchene, E., and Eichinger, K. (2024).
\newblock Wasserstein medians: robustness, {PDE} characterization, and
  numerics.
\newblock {\em SIAM J. Math. Anal.}, 56(5):6483--6520.

\bibitem[Castillo, 2012]{Cas12}
Castillo, I. (2012).
\newblock A semiparametric {B}ernstein--von {M}ises theorem for {G}aussian
  process priors.
\newblock {\em Probab. Theory Related Fields}, 152(1-2):53--99.

\bibitem[Castillo and Nickl, 2014]{CaNi14}
Castillo, I. and Nickl, R. (2014).
\newblock On the {B}ernstein-von {M}ises phenomenon for nonparametric {B}ayes
  procedures.
\newblock {\em Ann. Statist.}, 42(5):1941--1969.

\bibitem[Castillo and Rousseau, 2015]{Castillo2015}
Castillo, I. and Rousseau, J. (2015).
\newblock A {B}ernstein--von {M}ises theorem for smooth functionals in
  semiparametric models.
\newblock {\em Ann. Statist.}, 43(6):2353--2383.

\bibitem[Chao, 1970]{chao}
Chao, M.~T. (1970).
\newblock The asymptotic behavior of {B}ayes' estimators.
\newblock {\em Ann. Math. Statist.}, 41:601--608.

\bibitem[Chen and Li, 2020]{differentialwasser}
Chen, Y. and Li, W. (2020).
\newblock Optimal transport natural gradient for statistical manifolds with
  continuous sample space.
\newblock {\em Inf. Geom.}, 3(1):1--32.

\bibitem[Colombo et~al., 2021]{colombo2021}
Colombo, P., Staerman, G., Clavel, C., and Piantanida, P. (2021).
\newblock Automatic text evaluation through the lens of {W}asserstein
  barycenters.
\newblock In Moens, M.-F., Huang, X., Specia, L., and Yih, S. W.-t., editors,
  {\em Proceedings of the 2021 Conference on Empirical Methods in Natural
  Language Processing}, pages 10450--10466, Online and Punta Cana, Dominican
  Republic. Association for Computational Linguistics.

\bibitem[Csisz\'{a}r, 1991]{Csiszar1991}
Csisz\'{a}r, I. (1991).
\newblock Why least squares and maximum entropy? {A}n axiomatic approach to
  inference for linear inverse problems.
\newblock {\em Ann. Statist.}, 19(4):2032--2066.

\bibitem[del Barrio and Loubes, 2020]{delbarrio2020}
del Barrio, E. and Loubes, J.-M. (2020).
\newblock The statistical effect of entropic regularization in optimal
  transportation.
\newblock {\em arXiv preprint arXiv:2006.05199}.

\bibitem[Durante et~al., 2024]{Durante2024}
Durante, D., Pozza, F., and Szabo, B. (2024).
\newblock Skewed {B}ernstein--von {M}ises theorem and skew-modal
  approximations.
\newblock {\em Ann. Statist.}, 52(6):2714--2737.

\bibitem[Feng and Schlogl, 2018]{feng2019}
Feng, Y. and Schlogl, E. (2018).
\newblock Model risk measurement under wasserstein distance.
\newblock Research Paper Series 393, Quantitative Finance Research Centre,
  University of Technology, Sydney.

\bibitem[Gangbo and McCann, 1996]{mccan}
Gangbo, W. and McCann, R.~J. (1996).
\newblock The geometry of optimal transportation.
\newblock {\em Acta Math.}, 177(2):113--161.

\bibitem[Gelman et~al., 2014]{Gelman2014}
Gelman, A., Carlin, J.~B., Stern, H.~S., Dunson, D.~B., Vehtari, A., and Rubin,
  D.~B. (2014).
\newblock {\em Bayesian Data Analysis}.
\newblock Texts in Statistical Science Series. CRC Press, Boca Raton, FL, third
  edition.

\bibitem[Ghosal and Samanta, 1997]{Ghosal1997}
Ghosal, S. and Samanta, T. (1997).
\newblock Asymptotic expansions of posterior distributions in nonregular cases.
\newblock {\em Ann. Inst. Statist. Math.}, 49(1):181--197.

\bibitem[Ghosal and van~der Vaart, 2017]{ghosal_van_der_vaart_2017}
Ghosal, S. and van~der Vaart, A. (2017).
\newblock {\em Fundamentals of Nonparametric {B}ayesian Inference}, volume~44
  of {\em Cambridge Series in Statistical and Probabilistic Mathematics}.
\newblock Cambridge University Press, Cambridge.

\bibitem[Gin\'{e} and Nickl, 2016]{Nickl2016}
Gin\'{e}, E. and Nickl, R. (2016).
\newblock {\em Mathematical Foundations of Infinite-Dimensional Statistical
  Models}.
\newblock Cambridge Series in Statistical and Probabilistic Mathematics, [40].
  Cambridge University Press, New York.

\bibitem[Ibragimov and Has\cprime~minski\u{\i}, 1973]{Ibragimov1973}
Ibragimov, I.~A. and Has\cprime~minski\u{\i}, R.~Z. (1973).
\newblock Asymptotic behavior of certain statistical estimates. {II}. {L}imit
  theorems for a posteriori density and for {B}ayesian estimates.
\newblock {\em Teor. Verojatnost. i Primenen.}, 18:78--93.

\bibitem[Ibragimov and Has\cprime~minski\u{\i}, 1981]{Ibragimov}
Ibragimov, I.~A. and Has\cprime~minski\u{\i}, R.~Z. (1981).
\newblock {\em Statistical Estimation}, volume~16 of {\em Applications of
  Mathematics}.
\newblock Springer-Verlag, New York-Berlin.
\newblock Asymptotic theory, Translated from the Russian by Samuel Kotz.

\bibitem[Jeganathan, 1982]{Jeganathan1982}
Jeganathan, P. (1982).
\newblock On the convergence of moments of statistical estimators.
\newblock {\em Sankhy\={a} Ser. A}, 44(2):213--232.

\bibitem[Katsevich, 2025]{Kat25}
Katsevich, A. (2025).
\newblock Improved dimension dependence in the {B}ernstein--von {M}ises theorem
  via a new {L}aplace approximation bound.
\newblock {\em Inf. Inference}, 14(3):Paper No. iaaf020, 45.

\bibitem[Klatt et~al., 2022]{KMZ22}
Klatt, M., Munk, A., and Zemel, Y. (2022).
\newblock Limit laws for empirical optimal solutions in random linear programs.
\newblock {\em Ann. Oper. Res.}, 315(1):251--278.

\bibitem[Kleijn and van~der Vaart, 2012]{klein2012}
Kleijn, B. J.~K. and van~der Vaart, A.~W. (2012).
\newblock The {B}ernstein-{V}on-{M}ises theorem under misspecification.
\newblock {\em Electron. J. Stat.}, 6:354--381.

\bibitem[Kolouri et~al., 2019]{kolouri2018}
Kolouri, S., Pope, P.~E., Martin, C.~E., and Rohde, G.~K. (2019).
\newblock Sliced wasserstein auto-encoders.
\newblock In {\em International Conference on Learning Representations}.

\bibitem[Laplace, 1820]{laplace}
Laplace, P.-S. (1820).
\newblock {\em Théorie analytique des probabilités}.
\newblock Courcier, Paris, 3rd edition.

\bibitem[Lavenant et~al., 2024]{lavenant2024}
Lavenant, H., Luckhardt, J., Mordant, G., Schmitzer, B., and Tamanini, L.
  (2024).
\newblock The riemannian geometry of sinkhorn divergences.
\newblock {\em arXiv preprint arXiv:2405.04987}.

\bibitem[LeCam, 1953]{Lecam1953}
LeCam, L. (1953).
\newblock On some asymptotic properties of maximum likelihood estimates and
  related {B}ayes' estimates.
\newblock {\em Univ. California Publ. Statist.}, 1:277--329.

\bibitem[Lh\'{e}ritier and Iasnogorodski, 2005]{bayes_norm_loss}
Lh\'{e}ritier, H. and Iasnogorodski, R. (2005).
\newblock Asymptotic optimality of the {B}ayes estimator on differentiable in
  quadratic mean models.
\newblock {\em Zap. Nauchn. Sem. S.-Peterburg. Otdel. Mat. Inst. Steklov.
  (POMI)}, 328(Veroyatn. i Stat. 9):114--146, 279.

\bibitem[Liese and Miescke, 2008]{liese}
Liese, F. and Miescke, K.-J. (2008).
\newblock {\em Statistical Decision Theory}.
\newblock Springer Series in Statistics. Springer, New York.
\newblock Estimation, testing, and selection.

\bibitem[Matsuda and Strawderman, 2021]{Matsuda2021}
Matsuda, T. and Strawderman, W.~E. (2021).
\newblock Predictive density estimation under the {W}asserstein loss.
\newblock {\em J. Statist. Plann. Inference}, 210:53--63.

\bibitem[Minsker et~al., 2014]{minsker2014scalable}
Minsker, S., Srivastava, S., Lin, L., and Dunson, D. (2014).
\newblock Scalable and robust bayesian inference via the median posterior.
\newblock In {\em Proc. Int. Conf. Mach. Learn.}, Proceedings of Machine
  Learning Research, pages 1656--1664. PMLR.

\bibitem[Panaretos and Zemel, 2020]{zemel}
Panaretos, V.~M. and Zemel, Y. (2020).
\newblock {\em An Invitation to Statistics in {W}asserstein Space}.
\newblock SpringerBriefs in Probability and Mathematical Statistics. Springer,
  Cham.

\bibitem[Peyré and Cuturi, 2019]{Cuturi2019}
Peyré, G. and Cuturi, M. (2019).
\newblock Computational optimal transport: with applications to data science.
\newblock {\em Found. Trends Mach. Learn.}, 11(5-6):355--607.

\bibitem[Pollard, 1991]{Pollard1991}
Pollard, D. (1991).
\newblock Asymptotics for least absolute deviation regression estimators.
\newblock {\em Econom. Theory}, 7(2):186--199.

\bibitem[Robert, 1996]{intrinsic}
Robert, C.~P. (1996).
\newblock Intrinsic losses.
\newblock {\em Theory and Decision}, 40(2):191--214.

\bibitem[Rubner et~al., 2000]{Rubner2000}
Rubner, Y., Tomasi, C., and Guibas, L.~J. (2000).
\newblock The earth mover's distance as a metric for image retrieval.
\newblock {\em Int. J. Comput. Vis.}, 40(2):99--121.

\bibitem[Santambrogio, 2015]{San15}
Santambrogio, F. (2015).
\newblock {\em Optimal transport for applied mathematicians}, volume~87 of {\em
  Progress in Nonlinear Differential Equations and their Applications}.
\newblock Birkh\"{a}user/Springer, Cham.
\newblock Calculus of variations, PDEs, and modeling.

\bibitem[Schiebinger et~al., 2019]{SCHIEBINGER2019}
Schiebinger, G., Shu, J., Tabaka, M., Cleary, B., Subramanian, V., Solomon, A.,
  Gould, J., Liu, S., Lin, S., Berube, P., Lee, L., Chen, J., Brumbaugh, J.,
  Rigollet, P., Hochedlinger, K., Jaenisch, R., Regev, A., and Lander, E.~S.
  (2019).
\newblock Optimal-transport analysis of single-cell gene expression identifies
  developmental trajectories in reprogramming.
\newblock {\em Cell}, 176(4):928--943.e22.

\bibitem[Schuhmacher et~al., 2024]{Schuhmacher2024}
Schuhmacher, D., Bähre, B., Bonneel, N., Gottschlich, C., Hartmann, V.,
  Heinemann, F., Schmitzer, B., and Schrieber, J. (2024).
\newblock {\em transport: Computation of Optimal Transport Plans and
  Wasserstein Distances}.
\newblock R package version 0.15-4.

\bibitem[Shao, 2003]{Shao_2003_book}
Shao, J. (2003).
\newblock {\em Mathematical Statistics}.
\newblock Springer Texts in Statistics. Springer-Verlag, New York, second
  edition.

\bibitem[Shapiro, 1990]{Shapiro1990}
Shapiro, A. (1990).
\newblock On concepts of directional differentiability.
\newblock {\em J. Optim. Theory Appl.}, 66(3):477--487.

\bibitem[Sommerfeld et~al., 2019]{SSZM19}
Sommerfeld, M., Schrieber, J., Zemel, Y., and Munk, A. (2019).
\newblock Optimal transport: fast probabilistic approximation with exact
  solvers.
\newblock {\em J. Mach. Learn. Res.}, 20:Paper No. 105, 23.

\bibitem[Spokoiny, 2025]{Spo25}
Spokoiny, V. (2025).
\newblock Inexact {L}aplace approximation and the use of posterior mean in
  {B}ayesian inference.
\newblock {\em Bayesian Anal.}, 20(1):1303--1330.

\bibitem[Srivastava et~al., 2015]{srivastava2015wasp}
Srivastava, S., Cevher, V., Dinh, Q., and Dunson, D. (2015).
\newblock Wasp: Scalable bayes via barycenters of subset posteriors.
\newblock In {\em Artificial intelligence and statistics}, pages 912--920.
  PMLR.

\bibitem[Srivastava et~al., 2018]{SLD18}
Srivastava, S., Li, C., and Dunson, D.~B. (2018).
\newblock Scalable {B}ayes via barycenter in {W}asserstein space.
\newblock {\em J. Mach. Learn. Res.}, 19:Paper No. 8, 35.

\bibitem[Staudt et~al., 2025]{goett}
Staudt, T., Hundrieser, S., and Munk, A. (2025).
\newblock On the uniqueness of {K}antorovich potentials.
\newblock {\em SIAM J. Math. Anal.}, 57(2):1452--1482.

\bibitem[Tameling et~al., 2021]{Tameling2021}
Tameling, C., Stoldt, S., Stephan, T., Naas, J., Jakobs, S., and Munk, A.
  (2021).
\newblock Colocalization for super-resolution microscopy via optimal transport.
\newblock {\em Nat. Comput. Sci.}, 1(3):199--211.

\bibitem[van~der Vaart, 1998]{v}
van~der Vaart, A.~W. (1998).
\newblock {\em Asymptotic Statistics}, volume~3 of {\em Cambridge Series in
  Statistical and Probabilistic Mathematics}.
\newblock Cambridge University Press, Cambridge.

\bibitem[van~der Vaart and Wellner, 2023]{vanwell}
van~der Vaart, A.~W. and Wellner, J.~A. (2023).
\newblock {\em Weak Convergence and Empirical Processes---With Applications to
  Statistics}.
\newblock Springer Series in Statistics. Springer, Cham.
\newblock Second edition [of 1385671].

\bibitem[Villani, 2003]{villani1}
Villani, C. (2003).
\newblock {\em Topics in Optimal Transportation Theory}, volume~58.
\newblock American Mathematical Society, Providence, 1st edition.

\bibitem[Villani, 2009]{villani2}
Villani, C. (2009).
\newblock {\em Optimal Transport -- Old and New}, volume 338.
\newblock Springer-Verlag Berlin Heidelberg.

\bibitem[von Mises, 1931]{mises1931wahrscheinlichkeitsrechnung}
von Mises, R. (1931).
\newblock {\em Wahrscheinlichkeitsrechnung und ihre Anwendung in der Statistik
  und theoretischen Physik}.
\newblock Deuticke, Leipzig und Wien.

\bibitem[Willemse and Koppelaar, 2000]{gompertz}
Willemse, W.~J. and Koppelaar, H. (2000).
\newblock Knowledge elicitation of {G}ompertz' law of mortality.
\newblock {\em Scand. Actuar. J.}, (2):168--179.

\bibitem[Zhang, 2004]{Zhang2004}
Zhang, T. (2004).
\newblock Statistical behavior and consistency of classification methods based
  on convex risk minimization.
\newblock {\em Ann. Statist.}, 32(1):56--85.

\bibitem[Zhang et~al., 2022]{Zhang2022}
Zhang, Y.-Y., Rongthe, T.-Z., and Li, M.-M. (2022).
\newblock The bayes estimators of the variance and scale parameters of the
  normal model with a known mean for the conjugate and noninformative priors
  under stein’s loss.
\newblock {\em Front. Big Data}, 4.

\end{thebibliography}
\end{document}